\documentclass[12pt]{article}
\usepackage[utf8]{inputenc}
\usepackage[T1]{fontenc}
\usepackage[top=25mm, bottom=25mm, left=25mm, right=25mm]{geometry} 
\usepackage{amsfonts,amsmath,amsthm,amssymb}
\usepackage{hyperref,enumitem,enumerate}
\hypersetup{
    colorlinks=true,
    urlcolor=black,
}
\usepackage{graphics}
\usepackage{tikz}
\usepackage{xcolor}
\usetikzlibrary{patterns, patterns.meta}

\definecolor{applegreen}{rgb}{0.55, 0.71, 0.0}
\definecolor{brilliantrose}{rgb}{1.0, 0.33, 0.64}
\definecolor{gptblue}{rgb}{0.20, 0.55, 0.85}
\definecolor{gptorange}{rgb}{0.90, 0.42, 0.08}
\definecolor{gptpurple}{rgb}{0.48, 0.24, 0.72}
\definecolor{gptteal}{rgb}{0.00, 0.55, 0.52}
\definecolor{gptgold}{rgb}{0.85, 0.62, 0.05}

\newtheorem{theorem}{Theorem}

\newtheorem{corollary}[theorem]{Corollary}

\newtheorem{observation}[theorem]{Observation}
\newtheorem{claim}[theorem]{Claim}
\newtheorem{conjecture}[theorem]{Conjecture}

\theoremstyle{definition}

\usepackage{mathtools}

\newcommand{\cF}{\mathcal{F}}

\newcommand{\bg}{\hat{B}}

\DeclareMathOperator{\sm}{sum}

\let\epsilon=\varepsilon
\let\eps=\epsilon

\begin{document}

\title{Rainbow Tur\'an numbers for paths of length four}

\author{
Sylwia Antoniuk\thanks{Faculty of Mathematics and Computer Science, Adam Mickiewicz University, Uniwersytetu Poznańskiego 4, 61-614  Pozna\'n, Poland. E-mail: {\tt antoniuk@amu.edu.pl}. Supported by the National Science Centre grant 2024/53/B/ST1/00164.}\and
Andrzej Grzesik\thanks{Faculty of Mathematics and Computer Science, Jagiellonian University, {\L}ojasiewicza 6, 30-348 Krak\'{o}w, Poland. E-mail: {\tt andrzej.grzesik@uj.edu.pl}. Supported by the National Science Centre grant 2021/42/E/ST1/00193.}\and
Magdalena Prorok\thanks{AGH University of Krakow, al.~Mickiewicza 30, 30-059 Kraków, Poland. E-mail: {\tt prorok@agh.edu.pl}. Supported by the National Science Centre grant 2021/42/E/ST1/00193.}\and
Andrzej Ruciński\thanks{Faculty of Mathematics and Computer Science, Adam Mickiewicz University, Uniwersytetu Poznańskiego 4, 61-614 Pozna\'n, Poland. E-mail: {\tt rucinski@amu.edu.pl}. Supported by the National Science Centre grant 2024/53/B/ST1/00164.}}

\date{}

\maketitle

\begin{abstract}
 Given a set $V$ of $n$ vertices and an integer $k\ge1$, our goal is to maximize the number of edges in graphs $G_1, G_2, \ldots, G_k$, defined on $V$, under the constraint that the union of all graphs, thought of as a multi-graph, does not contain a~\emph{rainbow copy} of the path $P_5$ on $5$ vertices, that is, a copy of $P_5$ with each of its four edges belonging to a~different~$G_i$. 
 
 We consider two versions of the problem, in which, respectively, $\sum_i e(G_i)$ and $\min_i e(G_i)$  is maximized. In the former case, we determine the maximum precisely for all $k\le n-1$ (and also for $P_4$). In the latter, we obtain an asymptotic value for $k\in\{5,6,9\}$ and formulate a very plausible conjecture for all other values of $k$. 
 We also solve the problem for $k=4$, but under an additional assumption of completeness.
\end{abstract}

\section{Introduction}

Given $k$ graphs $G_1,\dots,G_k$ on the same vertex set $V$, we color edges of each graph with a~different color, and define the \emph{colored multi-graph} $M:=M(G_1,\dots,G_k)$ as a multi-graph on $V$ with all edges colored, in which, for every $i \in [k]$ and $u,v \in V$, there is exactly one edge $uv$ in color $i$ whenever $uv \in E(G_i)$. 
For better exposition, we will visualize each multi-edge of $M$ as a set of parallel edges of distinct colors.

A subgraph $F$ of $M$ will be called \emph{rainbow} if each edge of $F$ has a different color, that is, it belongs to a different graph~$G_i$.
For a family of multi-graphs $\cF$ we say that a colored multi-graph $M$ is \emph{rainbow} $\cF$-free if $M$ does not contain a rainbow subgraph isomorphic to any $F\in\cF$.

Following, respectively, \cite{KSSV04} and \cite{DM}, we define two natural \emph{rainbow} Tur\'an-type numbers. Given an integer $k\ge1$ and a set of multi-graphs $\cF$, let
$$ex_k^{\sm}(n,\cF)=\max\left\{\sum_{i=1}^k|E(G_i)|: M(G_1, \ldots, G_k) \ \mbox{is a rainbow $\cF$-free multi-graph on $[n]$}\right\}$$
and
$$ex_k^{\min}(n,\cF)=\max\left\{\min_{1\le i\le k}|E(G_i)|: M(G_1, \ldots, G_k) \ \mbox{is a rainbow $\cF$-free multi-graph on $[n]$}\right\}\!.$$

If $\cF=\{F\}$, then we say $F$-free instead of $\{F\}$-free, and write $ex_k^{\sm}(n,F)$ and $ex_k^{\min}(n,F)$ instead of $ex_k^{\sm}(n,\{F\})$ and $ex_k^{\min}(n,\{F\})$.

Let us mention that in \cite{Frankl-et-al} the study of yet another version of a rainbow Tur\'an-type number has been initiated, where instead the product $\Pi_{i=1}^ke(G_i)$ is maximized, while in~\cite{FMR24} other possible measures of rainbow $K_3$-free 3-colored multi-graphs were considered.  A summary of results on the topic can be found in a recent survey \cite{survey}.

\subsection{Results on the $\sm$ version}\label{Isum}

We first focus on the rainbow Tur\'an numbers $ex_k^{\sm}(n,\cF)$. For these numbers, there is an obvious lower bound in terms of the standard Tur\'an number,
    \[ex(n,\cF)=\max\left\{|E(G)|:\;G \mbox{ is an $\cF$-free graph on $[n]$}\right\}.\]
Indeed, for any $k\ge1$,
    \begin{equation}\label{Turan_bound}
    ex_k^{\sm}(n,\cF)\ge k\cdot ex(n,\cF),
    \end{equation}
by taking $G_i=T(n,\cF)$, $i=1,\dots,k$, where $T(n,\cF)$ is a fixed $\cF$-free graph with $ex(n,\cF)$ edges and vertex set $[n]$. For $k\ge \ell:=\min_{F\in\cF}|E(F)|$, there is a competing lower bound
    \begin{equation}\label{clique_bound}
    ex_k^{\sm}(n,\cF)\ge(\ell-1)\binom n2,
    \end{equation}
obtained by taking $G_i=K_n$, $i=1,\dots,\ell-1$, and $G_i=\emptyset$ otherwise. Note that for $k<\ell$, trivially $ex_k^{\sm}(n,\cF)=k\binom n2$.

In  \cite{KSSV04}, Keevash, Saks, Sudakov, and Verstra\"ete proved that for a broad class of graphs~$F$, either of these two lower bounds determines, in fact, the value of $ex_k^{\sm}(n,F)$. In particular, it holds for cliques $K_r$, $r\ge3$:

\begin{equation}\label{eq:cliquebound}
ex_k^{\sm}(n,K_r)
    =\begin{cases}
    \left(\binom r2-1\right)\binom n2 & \mathrm{if}\quad \binom r2\le k\le (r^2-1)/2\,,\\
    k\cdot ex(n,K_r) & \mathrm{if}\quad k \geq(r^2-1)/2\,.
    \end{cases}
\end{equation}

They also showed a similar result for 3-color-critical graphs and conjectured it for $r$-color-critical graphs for all $r\ge3$. Chakraborti, Kim, Lee, Liu, and Seo \cite{CKLLS} proved it for 4-color-critical graphs and for almost every $r$-color-critical graph for $r \geq 5$, while Li, Kim, Lee, and Seo \cite{LMZ} proved that indeed $ex_k^{\sm}(n,F) = k\cdot ex(n,F)$ for $r$-color-critical graph $F$ with $k$ large enough (in terms of $r$ and the number of edges of $F$). 

Keevash, Saks, Sudakov, and Verstra\"ete \cite{KSSV04} considered also bipartite graphs, which, in the context of Tur\'an numbers, are most difficult, and proved partial results for a selection of graphs, including the 4-cycle $C_4$: there exist constants $0<c<C$ such that
   \[ex_k^{\sm}(n,C_4)
    =\begin{cases}
     3\binom n2 & \mathrm{if}\quad 4\le k\le c\sqrt n\,,\\
    k\cdot ex(n,C_4) & \mathrm{if}\quad k >C\sqrt n\,.
    \end{cases}\]
So, in this case, the two canonical constructions, \eqref{Turan_bound} and \eqref{clique_bound}, still determine the rainbow ``sum'' Tur\'an number, but only in the initial and final range of $k$, leaving the middle unsettled.

 \smallskip

Our first two results confirm this phenomenon for almost the full range of $k$ for two other small bipartite graphs, the paths of lengths three and four.
Recall that Faudree and Schelp \cite{FSch75} and, independently, Kopylov \cite{K77}, improving upon the celebrated result of Erd\H os and Gallai \cite{EG59}, proved that for all $n\ge \ell+1\ge 3$,
$$ex(n,P_{\ell+1})=\frac{(\ell-1)(n-r)}2+\binom r2,$$
where $r\equiv n \pmod \ell$, and the optimal construction consists of $(n-r)/\ell$ vertex disjoint cliques $K_\ell$ and, if $r>0$, one clique $K_r$.
In particular,
    \[
    ex(n,P_4)=
    \begin{cases}
        n & \mathrm{if}\quad n\equiv 0 \pmod 3\,,\\
        n-1 & \mathrm{if}\quad n\not\equiv 0 \pmod 3\,,
    \end{cases}
    \]
while 
    \[
    ex(n,P_5)=
    \begin{cases}
        3n/2 & \mathrm{if}\quad n\equiv 0 \pmod 4\,, \\
        3(n-1)/2 & \mathrm{if}\quad n\equiv 1,3 \pmod 4\,, \\
        (3n-4)/2 & \mathrm{if}\quad n\equiv 2 \pmod 4\,.
    \end{cases} 
    \]
It should be noted that for $P_4$ and $n\not\equiv 0 \pmod 3$ there is another extremal graph, namely a~star $K_{1,n-1}$.

The non-divisible cases cause some problems when working on the rainbow versions.
Therefore, the two results, which we state below, are not fully exact, but are tight up to a small additive constant.

\begin{theorem}\label{thm:p4sum}
    For all integers $n$ and $k\geq3$,
    \[ex_k^{\sm}(n,P_4)
    \begin{cases}
    =2\binom n2 & \mathrm{if}\quad k\le n-1\,,\\
    =kn & \mathrm{if}\quad k\ge n,\ n\equiv 0 \pmod 3 \,,\\
    \le kn & \mathrm{if}\quad k\ge n,\ n\not\equiv 0 \pmod 3\,.
    \end{cases}\]
\end{theorem}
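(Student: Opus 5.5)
The lower bounds come from the two canonical constructions. For $\ell=3$, the clique bound \eqref{clique_bound} gives $ex_k^{\sm}(n,P_4)\ge 2\binom n2$ for $k\ge3$. The Tur\'an bound \eqref{Turan_bound} gives $ex_k^{\sm}(n,P_4)\ge k\cdot ex(n,P_4)=kn$ when $3\mid n$. Since $2\binom n2=n(n-1)\ge kn$ exactly when $k\le n-1$, all three cases follow from a single upper bound:
\[
\sum_i e(G_i)\le f(n,k):=\max\{n(n-1),\,kn\}.
\]
I would prove this by induction on $n$, removing two vertices (or three in one subcase) at a time. The base cases $n\le 3$ are checked directly.

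Call a pair $uv$ \emph{heavy} if it carries at least $3$ colors in $M$, and \emph{light} otherwise. If there is no heavy pair, every pair contributes at most $2$, so the total is at most $n(n-1)$ and we are done. So fix a heavy pair $uv$ with multiplicity $m\le k$.

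The key structural observation is the following. Suppose $uw$ has color $a$ and $vx$ has color $b$, where $w\ne x$ lie outside $\{u,v\}$ and $a\ne b$. Since $uv$ carries at least $3$ colors, one of them is different from both $a$ and $b$. Using that color on $uv$ gives the rainbow path $w\,u\,v\,x$. Hence, among the edges joining $\{u,v\}$ to $V\setminus\{u,v\}$, one of two things holds:
\begin{enumerate}[label=(\roman*)]
\item every such edge has the same color $c$, except possibly when all edges at $u$ and all edges at $v$ go to one common vertex;
\item all these edges go to a single vertex $w$.
\end{enumerate}
A little extra care is needed when one of $u,v$ has no outside edges. In that case edges at the other endpoint of many colors are allowed, but they go to distinct vertices and can be handled by a similar argument using $uv$ as an end edge.

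In case (i), the total weight of all edges touching $\{u,v\}$ is at most $k+2(n-2)$. Deleting $u,v$ and applying induction gives
\[
\sum_i e(G_i)\le f(n-2,k)+k+2n-4.
\]
It remains to compare $k+2n-4$ with the needed increments:
\begin{itemize}
\item if $k\le 2n-2$, then $k+2n-4\le 4n-6=n(n-1)-(n-2)(n-3)$;
\item otherwise $k+2n-4\le 2k=kn-k(n-2)$.
\end{itemize}
One has to check that the maximum defining $f$ switches branches consistently between $n-2$ and $n$. If it does not, I would instead bound the increment against whichever branch is active at $n$, using $f(n-2,k)\le f(n,k)-\min\{4n-6,2k\}$ where valid.

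In case (ii), the edges touching $\{u,v\}$ lie inside the triangle $uvw$. I would then delete $u,v,w$ together. The weight on the triangle is at most $3k$ plus what $w$ sends outside. Rainbow-freeness restricts the edges from $w$ to the rest: if $uw$ or $vw$ carries at least $2$ colors, the same path argument forces $w$'s outside edges to be essentially monochromatic. So the contribution is at most about $3k+(n-3)$, which should fit under $f(n,k)-f(n-3,k)$.

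I expect the main obstacle to be this case analysis around heavy pairs. The degenerate subcases are delicate: an isolated heavy pair, a heavy triangle, and one endpoint with no outside edges. Each must be made to match both branches of $f(n,k)$ simultaneously, especially near the threshold $k\approx n$.

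The non-divisibility by $3$ only enters through the lower bound. The upper-bound induction never uses $3\mid n$, which is why the third case is stated only as an inequality.
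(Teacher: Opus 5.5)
Your reduction of all three cases to the single bound $f(n,k)=\max\{n(n-1),kn\}$ is fine, and so are the lower bounds. The gap is in the inductive step. You delete the two ends of a heavy pair $uv$ and charge the total multiplicity of pairs touching $\{u,v\}$ against $f(n,k)-f(n-2,k)$. This cannot work as designed, because that weight can exceed the increment in rainbow-$P_4$-free graphs.

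First, the increment. For $k\ge n-1$ both $f(n,k)$ and $f(n-2,k)$ are on the $kn$ branch, so the increment is only $2k$. The value $4n-6=n(n-1)-(n-2)(n-3)$ you compare with is the increment only for $k\le n-3$. Your fallback $\min\{4n-6,2k\}$ is again $2k$ when $n-1\le k<2n-4$.

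Two examples show the step failing:
\begin{itemize}
\item \emph{Your case (i).} Let $uv$ carry all $k$ colors, join $u$ and $v$ by single edges of one color $c$ to every other vertex, and give all remaining edges color $c$ only. Every path of length three then has two edges of color $c$, so the graph is rainbow-$P_4$-free. The weight touching $\{u,v\}$ is $k+2(n-2)$, which exceeds $2k$ for $n-1\le k<2n-4$. Hence $f(n-2,k)+k+2(n-2)>kn$.
\item \emph{Your degenerate subcase.} Join one vertex $v$ to all others in all $k$ colors and add nothing else. There is no $P_4$ at all, every $uv$ is heavy with $u$ having no outside edges, and the weight touching $\{u,v\}$ is $k(n-1)$.
\end{itemize}
In both examples $e(M)$ is small only because the rest of $M$ is starved. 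Bounding $M-\{u,v\}$ by $f(n-2,k)$ cannot see this.

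Two further problems:
\begin{itemize}
\item The dichotomy (i)/(ii) does not follow from your path observation, which only constrains pairs $w\neq x$. Take $uv$ and $vw$ in all $k$ colors, $uw$ and $vx$ single edges of color $c$, and no other edges at $w,x$. This is rainbow-$P_4$-free, fits neither case, and puts weight $2k+2$ on pairs touching $\{u,v\}$.
\item In case (ii), $3k+(n-3)$ exceeds $f(n,k)-f(n-3,k)=3k$ for $k\ge n-1$.
\end{itemize}

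What is missing is the paper's preliminary single-vertex step. Since $f(n,k)-f(n-1,k)\ge\max\{n,k+1\}-1$, any vertex of degree at most $\max\{n,k+1\}-1$ can be deleted and induction applied on $n-1$ vertices. So you may assume $\delta(M)\ge\max\{n,k+1\}$. This already kills both examples above, since their non-central vertices have degree at most $\max\{n-1,k\}$.

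With this minimum degree, $u$ and $v$ each have a neighbor outside $\{u,v\}$. Also every vertex lies on a pair of multiplicity at least $2$, because its degree exceeds $n-1$. Suppose $u$ and $v$ had distinct outside neighbors $x$ and $y$:
\begin{itemize}
\item The 1-2-3 rule gives $m(u,x)=m(v,y)=1$.
\item It then forces the multi-edge at $x$ to go to $v$ and the one at $y$ to go to $u$.
\item Now $yuvx$ is rainbow, a contradiction.
\end{itemize}
So $u$ and $v$ share a single outside neighbor $w$. Applying the rule once more, with $\delta(M)\ge n$ at $w$, shows that $\{u,v,w\}$ is a connected component. Deleting it loses at most $3k\le f(n,k)-f(n-3,k)$ in every range of $k$, which closes the induction.
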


\noindent Note that in the first case the lower bound is provided by~\eqref{clique_bound}, while in the second case by~\eqref{Turan_bound}. For  $k\geq n,\;n\not\equiv 0 \pmod 3$,~\eqref{Turan_bound} yields $ex_k^{\sm}(n,P_4)\ge k(n-1)$ that almost matches the upper bound stated above.

\begin{theorem}\label{thm:p5sum}
    For all integers $n$ and $k\geq4$,
    \[ex_k^{\sm}(n,P_5)
    \begin{cases}
    =3\binom n2 & \mathrm{if}\quad k \le n-1 \,,\\
    =\frac32kn & \mathrm{if}\quad k\ge n,\ n\equiv 0 \pmod 4 \,,\\
   \le \frac32kn & \mathrm{if}\quad k\ge n,\ n\not\equiv 0 \pmod 4 \,. \end{cases}\]
\end{theorem}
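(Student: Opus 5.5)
Both lower bounds are already available: \eqref{clique_bound} with $\ell=4$ gives $3\binom n2$, and \eqref{Turan_bound} with the Faudree--Schelp/Kopylov value gives $\frac32kn$ when $4\mid n$. So all the work is in the single upper bound
\[
\sum_i e(G_i)\le \max\Bigl\{3\tbinom n2,\ \tfrac32 kn\Bigr\},
\]
which covers both regimes, since $3\binom n2\ge\frac32kn$ exactly when $k\le n-1$. I would prove this by induction on $n$, removing one vertex at a time. The base cases are small $n$, where $k\ge n$ forces the bound $\frac32kn$, and these can be checked directly.

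For the induction step, I would use the multiplicity profile of $M$. For a vertex pair $uv$, let $m(uv)$ be the number of colors on $uv$, and for a vertex $v$ let $d(v)=\sum_u m(uv)$ be its total colored degree. Deleting $v$ leaves a rainbow $P_5$-free colored multigraph on $n-1$ vertices. The induction then closes if some vertex has $d(v)\le 3(n-1)$ in the regime $k\le n-1$, or $d(v)\le \frac32k$ in the regime $k\ge n$. The first check is $3\binom n2-3\binom{n-1}2=3(n-1)$, and when we pass from $n$ to $n-1$ with $k\ge n$ we stay in the second regime. A small boundary issue arises at $k=n-1$: the smaller instance has $k\ge n-1$ vertices, so the bound used there is $\frac32k(n-1)=3\binom n2$, which still matches.

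So the key lemma is a local one. In a rainbow $P_5$-free $M$ in which every vertex has $d(v)$ above the threshold, I would derive a contradiction by exhibiting a rainbow $P_5$, or else show directly that the total edge count is small. The natural tool is the classification of pairs with multiplicity at least 4. If two such pairs share a vertex or are joined by any edge, a rainbow $P_5$ appears easily: a $P_5$ that uses two heavy pairs and one further edge can always be colored greedily. Likewise, a heavy pair $uv$ with $m\ge4$ together with an edge $ux$ and an edge $vy$, where $x\ne y$ and both lie outside $\{u,v\}$, gives the rainbow path $x\,u\,v\,y$, which can then be extended. This would force heavy pairs to be sparse, and mostly isolated matching-like structures inside small components.

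Counting weights then finishes the argument. Say a pair of multiplicity at most 3 contributes at most 3. The pairs with multiplicity at least 4 have to be localized, for example inside components of size at most 4 when $k\ge n$, which mimics the disjoint-$K_4$ extremal construction. Each such component carries at most $k\binom{4}{2}/\,$(its size) per vertex, giving $\frac32k$ per vertex. In the regime $k\le n-1$, the saving comes from the fact that a vertex incident to a heavy pair loses many potential neighbours.

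The main obstacle is the intermediate regime, where multiplicities are mixed, for instance a component that is a $K_4$ with multiplicity $k$ attached loosely to the rest, or stars whose edges are of moderate multiplicity. I would handle this with a discharging argument in which each heavy pair sends its excess weight to its endpoints. The delicate step is proving that the structure around every heavy pair (a rainbow $P_5$ forbids long colorful extensions) caps the total weight at $\max\{3(n-1),\frac32k\}$ for the deleted vertex. I would first try choosing $v$ of minimum $d(v)$, and then do a case analysis on the largest multiplicity at $v$.
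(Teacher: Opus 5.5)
Your lower bounds and the reformulation as $\max\{3\binom n2,\frac32kn\}$ are fine. There is a genuine gap in the upper bound, located exactly where you put ``all the work''.

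\textbf{Single-vertex deletion fails for $k\ge n-1$.} Your induction removes one vertex at a time, so in this regime it needs a vertex with $d(v)\le\frac32k$. This includes $k=n-1$: there $M-v$ is bounded by $\frac32k(n-1)=\frac32(n-1)^2$, not by $3\binom n2$. So the required threshold is $\frac32(n-1)$, not $3(n-1)$, and your identity $\frac32k(n-1)=3\binom n2$ is wrong.

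In the extremal configuration itself (vertex-disjoint copies of $K_4$ with every pair carrying all $k$ colors) every vertex has $d(v)=3k$. This exceeds $\frac32k$, and for $k\ge n$ it also exceeds $3(n-1)$. So no case analysis around a minimum-degree vertex can give the cap $\max\{3(n-1),\frac32k\}$ you aim for. Locally, $v$ may look exactly like a vertex of such a block. The contradiction can only come from deleting a whole block $S$, for which $e(M)-e(M-S)=6k=\frac32k|S|$.

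The paper's induction is built for this. In a minimal counterexample, every set $S$ satisfies $e(M)-e(M-S)\ge\lceil\frac32sk+\frac12\rceil$. This is applied to sets of size $2,3,4$ and to a larger set $A$. The final step is then a global count, not a vertex deletion.

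\textbf{The heavy-pair claims are false as stated.} Your own $K_4$-component picture contradicts them. In a block $K_4$ with all multiplicities $k\ge4$, heavy pairs share vertices and are joined by edges, yet there is not even a $P_5$, since there are only four vertices. Stars and double stars with heavy edges are also $P_5$-free, so $x\,u\,v\,y$ around a heavy $uv$ need not extend. Moreover, a path made of two heavy pairs and one further edge has only three edges, so it is a $P_4$, not a $P_5$.

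\textbf{The missing lemma.} What you need holds only under the minimal-counterexample conditions: no vertex is incident to both a pair of multiplicity $\ge3$ and a pair of multiplicity $\ge4$. The paper proves it in three steps, using $\delta(M)\ge n+2$, the set-deletion bound, and the 1-2-3-4 rule:
\begin{enumerate}
\item It excludes a vertex with one neighbour of multiplicity $\ge3$ and three of multiplicity $\ge4$, by deleting the maximal such neighbourhood $A$.
\item It shows that if $m(u,v)\ge3$ and $m(u,w)\ge4$, then $v$ and $w$ cannot have distinct neighbours outside $\{u,v,w\}$.
\item It finishes with deletions of $\{u,w\}$, $\{v,w\}$, $\{u,v,w,x\}$ and $\{v,w,x\}$.
\end{enumerate}
With this lemma, pairs of multiplicity $\ge4$ form a matching whose endpoints have all other multiplicities at most $2$. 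The count $2\binom{2t}2+(k-2)t+4t(n-2t)+3\binom{n-2t}2=\frac32(n^2-n)+t(2t+k-2n-1)$ then settles both regimes at once.

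Your discharging sketch supplies neither this lemma nor a substitute for it. The ``delicate step'' you defer is essentially the entire proof.
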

\noindent Again,  in the first case the lower bound is provided by~\eqref{clique_bound}, while in the second case by~\eqref{Turan_bound}.  For $k\geq n,\ n\not\equiv 0 \pmod 4$,~\eqref{Turan_bound} yields lower bounds $ex_k^{\sm}(n,P_5)\ge\frac32k(n-1)$ when $n \equiv 1,3 \pmod 4$, and $ex_k^{\sm}(n,P_5)\ge\frac12k(3n-4)$ when $n\equiv 2 \pmod 4$, both very close to the upper bound.

\subsection{Results on the $\min$ version}\label{res_min}

In this variant, the construction behind the lower bound~\eqref{Turan_bound} still stands, yielding the bound
$ex_k^{\min}(n,\cF)\ge ex(n,\cF)$. However, for bipartite connected graphs, it is grossly improved by the following general construction.

Throughout this subsection, we
restrict ourselves to singleton families $\cF=\{F\}$, and further assume that $F$ is a~connected multi-graph with $\ell$ edges.  Setting 
$$\kappa:=\left\lceil\frac k{\ell-1}\right\rceil,$$ one may split $V$ into $\kappa$ disjoint sets $V_1,\dots,V_\kappa$, $\lfloor n/\kappa\rfloor\le |V_j|\le\lceil n/\kappa\rceil$, $j=1,\dots,\kappa$, and plant up to $\ell-1$ cliques $G_i$ on each set $V_j$, so that all graphs $G_i$ are taken care of, obtaining the lower bound
    \begin{equation}\label{split_bound}
    ex_k^{\min}(n,F)\ge \binom{\lfloor n/\kappa\rfloor}2.
    \end{equation}

As we will soon learn, for the rainbow Tur\'an numbers $ex^{\min}_k(n,F)$, so far mainly approximate results have been obtained. They are best expressed in terms of the  \emph{rainbow Tur\'an density} defined as
    $$\pi^{\min}_k(F)=\lim_{n\to\infty}\frac{ex^{\min}_k(n,F)}{\binom n2}.$$
Note that $\pi^{\min}_1(F)$ equals the ordinary Tur\'an density $\pi(F)$.

In \cite[Theorem 1.1]{IKLS25}, it was proved that for each $F$ and $k\ge 1$ the limit defining $\pi^{\min}_k(F)$ does exist. Note that for $k<\ell$, we have $\pi^{\min}_k(F)=1$ and, otherwise, $\pi^{\min}_k(F)\ge\pi(F)$.
Note also that, as a consequence of~\eqref{split_bound}, for every connected multi-graph $F$  we have
$\pi^{\min}_k(F)\ge \kappa^{-2},$
and summarizing,
    $$\pi^{\min}_k(F)\ge \max\left\{\pi(F),\,\left\lceil \frac k{\ell-1}\right\rceil^{-2}\right\}.$$

The first result in this direction dealt with triangles. By (\ref{eq:cliquebound}) we have that for any $k \geq 4$, $\pi^{\min}_k(K_3) = \pi(K_3) = 1/2$, while for $k=3$, perhaps surprisingly, Magnant \cite{Magnant} showed that the above lower bound does not hold. Later, Aharoni, DeVos, Gonz\'alez Hermosillo de la Maza, Montejano and \v S\'amal \cite[Theorem 1.2]{ADGMS20} proved that
$$\pi^{\min}_3(K_3)=\frac{52-4\sqrt7}{81}\approx0.5114.$$

Besides triangles, the only graphs $F$ for which $\pi^{\min}_k(F)$ has been determined for every $k$ are some special trees. For trees and, more generally, for all bipartite graphs, $\pi(F)=0$, and thus
    \begin{equation}\label{kell2}
    \pi^{\min}_k(F)\ge \lceil  k/(\ell-1)\rceil^{-2}.
    \end{equation}
This was complemented by  Im, Kim, Lee, and Seo in
 \cite[Proposition 1.11]{IKLS25} with an upper bound
$\pi^{\min}_k(F)\le(\ell-1)/k$. They also showed in \cite[Theorem 1.3]{IKLS25} that among all trees $T_{k+1}$ on $k+1$ vertices the rainbow Tur\'an density (with $k$ colors) is maximized by stars and determined the latter:

\[\pi_k^{\min}(T_{k+1})\le \pi_k^{\min}(K_{1,k})=\left(\frac{k-1}k\right)^2.\]
Note that this is a better bound than $(\ell-1)/k$, since for $T_{k+1}$ we have $\ell=k$.

The last known case so far is that of the path $P_4$, proved by Babi\'nski and the second author in \cite[Theorem 1]{BG23}.

\begin{theorem}[\cite{BG23}]\label{thm:p4min} We have
    \[
    \pi_k^{\min}(P_4)=\begin{cases}
    \left\lceil\frac k2\right\rceil^{-2}& \mathrm{if}\quad 3\le k\le 6 \,,\\
    \frac1{2k-1} & \mathrm{if}\quad k\ge7\,.\\
    \end{cases}
    \]
\end{theorem}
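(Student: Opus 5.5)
The lower bounds come from explicit constructions and the upper bound from a structural analysis followed by an optimization. Since $P_4$ has $\ell=3$ edges, we have $\kappa=\lceil k/2\rceil$, and the split construction~\eqref{split_bound} gives $\pi_k^{\min}(P_4)\ge\lceil k/2\rceil^{-2}$ for every $k\ge3$.

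\textbf{A second lower bound construction.} For large $k$ I would use a ``hub'' construction. Take disjoint sets $V_1,\dots,V_k$, each of size $t$, and a set $W$ of size $w=n-kt$. Let $G_i$ consist of the clique on $V_i$ together with the complete bipartite graph between $V_i$ and $W$. To see that this is rainbow $P_4$-free, consider the middle edge $bc$ of a would-be rainbow path $abcd$.
\begin{itemize}
\item There are no edges inside $W$, and no edges between distinct $V_i$ and $V_j$.
\item If $b$ or $c$ lies in $V_j$, every edge at that endpoint has color $j$, so the middle edge and one outer edge share a color.
\end{itemize}
Hence no rainbow $P_4$ exists. Every color class has $\binom t2+tw\approx tn-(k-\tfrac12)t^2$ edges. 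Taking $t=n/(2k-1)$ gives $\frac{n^2}{2(2k-1)}$ edges per color, so $\pi_k^{\min}(P_4)\ge 1/(2k-1)$. Comparing the two bounds, $1/(2k-1)\ge\lceil k/2\rceil^{-2}$ exactly when $k\ge7$ (for instance $1/13>1/16$ at $k=7$, while $1/11<1/9$ at $k=6$). This explains the threshold in the statement.

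\textbf{Upper bound, structural step.} The basic observation is that an edge $uv$ of color $j$ is the middle of a rainbow $P_4$ as soon as $u$ has an edge of color $i\ne j$ to some $x\ne v$, and $v$ has an edge of color $l\notin\{i,j\}$ to some $y\notin\{u,x\}$. First I would clean the multigraph, deleting $o(n^2)$ edges so that every vertex has either zero or linear degree in each color it meets. After cleaning, for each vertex $v$ let $S(v)$ be the set of colors with linear degree at $v$. Then for every edge $uv$ of color $j$ we get a strong restriction: either $S(u)\subseteq\{j\}$, or $S(v)\subseteq\{j\}$, or $S(u)\cup S(v)$ contains at most two colors including $j$, so that $S(u)\setminus\{j\}=S(v)\setminus\{j\}$ is a single color. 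Consequently:
\begin{itemize}
\item vertices with $|S(v)|\ge3$ (``hubs'') send edges only to ``private'' vertices with $|S(u)|=1$;
\item vertices with $|S(v)|=2$ interact only with vertices whose color sets lie inside the same pair.
\end{itemize}
So, up to $o(n^2)$ edges, the vertex set decomposes into hubs, private blocks $V_i$, and pair blocks carrying two colors at once.

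\textbf{Upper bound, optimization step.} Let $h n$ be the number of hubs, $x_i n$ the size of the private block of color $i$, and $y_{P} n$ the size of the block of pair $P$. The edges of color $i$ are then at most about
\[
\Bigl(x_i^2/2 + x_i h + \sum_{P\ni i} y_P^2/2\Bigr)n^2 ,
\]
together with whatever the pair blocks contribute and any bipartite edges between a pair block and private vertices. We must maximize $\min_i$ of this quantity subject to $h+\sum_i x_i+\sum_P y_P\le1$.

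\textbf{Main obstacle.} I expect this optimization to be the hardest part. One must show that mixing pure pair blocks (which give $\lceil k/2\rceil^{-2}$) with hub-plus-private structures (which give $1/(2k-1)$) never beats the larger of the two. My plan is a convexity/averaging argument: sum the edge counts over all colors, use that each pair block serves at most two colors, and reduce to two extremal configurations. The delicate point is an odd $k\le6$, where one color is covered by a pair block shared with an otherwise unused slot, or by a tiny hub. There I would check the small cases $k=3,5$ directly, showing that adding a hub component only lowers the minimum.
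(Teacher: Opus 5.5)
The paper gives no proof of this statement; it is quoted from Babi\'nski and Grzesik. So I can only judge your argument on its own terms.

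\textbf{What is correct.} Both lower-bound constructions work, and the hub construction is rainbow $P_4$-free for exactly the reason you give. One small correction: at $k=5$ the two bounds coincide (both equal $1/9$), so the comparison is \emph{strict} exactly for $k\ge7$.

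Your structural step is also correct, and it is in fact an exact reduction. After cleaning, every edge of color $i$ lies inside $V_i$, between $V_i$ and $H\cup\bigcup_{P\ni i}Q_P$, or inside some $Q_P$ with $P\ni i$. Writing $Y_i:=\sum_{P\ni i}y_P$, this gives
\[
\frac{e(G_i)}{\binom n2}\lesssim f_i:=x_i^2+2x_i(h+Y_i)+\sum_{P\ni i}y_P^2 .
\]
Conversely, every such configuration is rainbow $P_4$-free. The cross term $2x_iY_i$ has to appear explicitly; your display hides it in ``whatever the pair blocks contribute''. Hence $\pi_k^{\min}(P_4)$ is exactly the value of $\max\min_i f_i$ subject to $\sum_i x_i+h+\sum_P y_P\le1$. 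The entire upper bound consists of solving this non-convex max--min problem, and your proposal leaves it as a plan.

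\textbf{Where the plan fails.} Summing over colors cannot control a minimum. A single block $Q_{\{1,2\}}$ on all vertices already gives $\sum_i f_i=2$ (compare Theorem~\ref{thm:p4sum}), so averaging yields only $2/k$.

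The natural way to use ``each pair block serves at most two colors'' is to bound $\sum_{P\ni i}y_P^2\le Y_i^2$ and charge half of each block to each of its two colors. With $z_i=x_i+Y_i$, the mass becomes $h+\sum_i\frac{x_i+z_i}{2}$ under the constraints $z_i^2+2x_ih\ge F$. Each color then costs at least $\min\{\tfrac12\sqrt F,\sqrt{h^2+F}-h\}$, and optimizing over $h$ gives $F\le\max\{4/k^2,\,1/(2k-1)\}$.

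This does prove the theorem for every even $k$ and every $k\ge8$. For $k=3,5,7$, however, it gives only $4/9$, $4/25$, $4/49$ instead of the true values $1/4$, $1/9$, $1/13$. The relaxation is attained, for instance, by three blocks of size $1/3$ forming a triangle at $k=3$. So $k=7$ is just as delicate as the odd $k\le6$ you single out.

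\textbf{The missing idea.} You need a parity-sensitive step that uses exactly the information this relaxation discards:
\begin{itemize}
\item an odd number of colors cannot be perfectly paired;
\item a color lying in blocks with two different partners receives only $\sum_{P\ni i}y_P^2$, not $Y_i^2$;
\item a leftover color must pay for its own block or hub structure.
\end{itemize}
Without such an argument, the upper bound is not established for $k\in\{3,5,7\}$. For the remaining $k$, the optimization still has to be carried out rather than merely planned.
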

\noindent Note that for $3\le k\le 6$ the value of $\pi_k^{\min}(P_4)$ is equal to the lower bound~\eqref{kell2}, while for $k\ge7$ it is larger, e.g., for $k=8$ it is $\frac1{15}$ versus $\frac1{16}$.
For a relevant construction for $k\ge7$, adapted to $P_5$, see Section~\ref{conjk}.

In this paper, we prove an analogous result for $P_5$, the path of length four, but  only for a~few values of $k$.

\begin{theorem}\label{thm:p5min} We have
    \[
        \pi_k^{\min}(P_5)=
        \begin{cases}
        \frac14 & \mathrm{if}\quad k\in\{5,6\} \,,\\
        \frac19 & \mathrm{if}\quad k=9\,.\\
    \end{cases}\]
\end{theorem}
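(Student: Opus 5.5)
The lower bounds come from the split construction \eqref{split_bound}. For $P_5$ we have $\ell=4$, so $\kappa=\lceil k/3\rceil$. This gives $\kappa=2$ for $k\in\{5,6\}$ and $\kappa=3$ for $k=9$, hence $\pi^{\min}_k(P_5)\ge 1/4$ and $\ge 1/9$ respectively. All the work is therefore in the matching upper bounds. I would prove these in the form $\min_i e(G_i)\le (1/\kappa^2+o(1))\binom n2$. It suffices to treat $k=6$ and $k=9$ with slack, and to observe that $k=5$ follows from $k=6$ is not automatic, because the monotonicity goes the wrong way. So I would handle $k=5$ directly, via the same structural argument.

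\textbf{Structural step.} Assume all $G_i$ have at least $(c+\eps)\binom n2$ edges, with $c=1/\kappa^2$. First I would clean the graphs: iteratively delete low-degree vertices from each $G_i$, losing $o(n^2)$ edges, so that each $G_i$ has minimum degree linear in $n$ on its support.

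The key local fact is about a vertex $v$ with large degree in three colours $a,b,c$. Then $v$ can serve as the middle vertex of a rainbow $P_5$ built from two edges in two colours at $v$, extended by edges of further colours at the neighbours. Forbidding a rainbow $P_5$ therefore forces the neighbourhoods to be closed under the remaining colours. I would formalise this as a component lemma. Call colours $i,j$ \emph{linked} if $G_i$ and $G_j$ share a vertex of linear degree in both. Within any connected dense structure where at least four colours interact, one can greedily build a rainbow $P_5$, using the linear minimum degrees to avoid vertex repetitions. Consequently, the vertex set splits into essentially disjoint blocks $V_1,\dots,V_m$, and on each block at most three colours have linearly many edges.

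\textbf{Counting step.} Let the block sizes be $x_jn$ with $\sum_j x_j\le 1$. Each colour lives essentially inside one block (a colour spread over several blocks only splits its edges further), so $e(G_i)\le (x_j^2+o(1))\binom n2$ for the block $j$ containing colour $i$. Each block hosts at most three colours, so with $k$ colours at least $\lceil k/3\rceil=\kappa$ blocks are used, and some colour has at most $(\min_j x_j^2)\binom n2\le \kappa^{-2}\binom n2$ edges. This gives the upper bound.

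\textbf{Main obstacle.} The delicate point is the component lemma. A single colour could in principle span two blocks, or a block could host three colours together with a fourth colour that is sparse there but dense elsewhere. Also, $P_5$ is short enough that bipartite-like configurations avoiding rainbow $P_5$ might exist: a colour forming a star-like or complete bipartite pattern between blocks, which is exactly what makes $k\ge 7$ for $P_4$ behave differently. This explains why only $k\in\{5,6,9\}$ are claimed. For these $k$, the constraint $\sum x_j\le1$ with $\kappa$ blocks is tight enough that any cross-block bipartite colour has at most $2x_ax_b\binom n2$ edges, and I would need a case analysis showing this is below $\kappa^{-2}$ under the constraints. I would first try a weighted counting or LP argument over block sizes, and check that the value $1/4$, respectively $1/9$, is not beaten.
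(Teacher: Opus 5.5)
Your lower bound is correct, but the upper bound has a genuine gap. Everything rests on the ``component lemma'': essentially disjoint blocks, at most three dense colours per block, and each colour living in a single block. You do not prove it, and in the $k$-independent form you state it is false. The paper's constructions in Section~\ref{fin_rem} are counterexamples.

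\begin{itemize}
\item In Construction~I for $k=4$, $G_i$ is the union of cliques on $A_i$ and on $B$ for $i\le 3$, and $G_4$ is the clique on $A_1\cup A_2\cup A_3$. Every pair of colours is linked through vertices of linear degree in both ($B$ links $1,2,3$, and $A_i$ links $i$ with $4$), yet there is no rainbow $P_5$.
\item In Construction~III, a vertex of $A_1$ has linear degree in all $k-a+1$ of its colours, and again no rainbow $P_5$ exists. So the greedy-embedding claim fails.
\item Colours $1,2,3$ in Construction~I live on two disjoint blocks and gain from it, having about $(|A_i|^2+|B|^2)\binom n2$ edges. Your remark that a colour spread over several blocks ``only splits its edges further'' is therefore backwards.
\end{itemize}

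Consequently, if your structural and counting steps were valid they would give $\pi_4^{\min}(P_5)\le\frac14$, $\pi_7^{\min}(P_5)\le\frac19$ and $\pi_{10}^{\min}(P_5)\le\frac1{16}$. All three are false by Constructions~I and~III. The obstructions are also not only cross-block bipartite colours: a colour may be a clique on a union of blocks, or a clique plus a complete bipartite graph to several blocks. Whatever is special about $k\in\{5,6,9\}$ must enter in an essential, quantitative way, and that is exactly what you defer to an unspecified ``case analysis or LP argument''.

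The paper instead proceeds as follows.
\begin{itemize}
\item It passes to $Hom(P_5)$ via \eqref{pipi}. Then all multiplicities are at most $3$, and any vertex on a multi-edge sees at most three colours (Observation~\ref{obs:double_3colors}).
\item Its key tool is the base graph $\hat B(i,j,h)$ of rainbow $2$-paths in three colours (Claim~\ref{cla:base}). If it has more than $(\frac18+\eps')n^2$ edges, then together with two further colours Theorem~\ref{thm:p4min} for three colours gives a rainbow $W_3$, which lifts to a rainbow $W_4$. This is where $k\ge 5$ is used.
\item For $k=6$, a vertex of degree at least $\frac32 n$ immediately yields such a large base graph.
\item For $k=5$, an analysis of three-coloured cliques (using maximality of $M$) shows $d_2(v)+\dots+d_5(v)\le n$ for every $v$. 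This contradicts $\sum_{i\ge2}e(G_i)>\frac12n^2$.
\item For $k=9$, the $k=5$ result is bootstrapped to show that every set of $n/3$ vertices meets at least five colours. The neighbourhood of a vertex of degree at least $n$, plus a convexity computation, then forces some $e(G_i)\le\frac{n^2}{18}$.
\end{itemize}
Note also that $k=6$ follows from $k=5$ by monotonicity, so the real content lies in $k=5$ and $k=9$.
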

\noindent Note that the obtained values coincide with the lower bound~\eqref{kell2}.

In Section~\ref{conjk} we propose a conjecture determining the value of $\pi_k^{\min}(P_5)$ for all remaining values of $k$. The most interesting case of $k=4$ also turned out to be the most challenging one. We conjecture, based on a sophisticated lower bound construction, that $\pi_4^{\min}(P_5)=153-108 \sqrt{2}\approx 0.265$ (see more about this conjecture in Section~\ref{conj4}). Thus, again, we would get a bound higher than that in~\eqref{kell2}.

As the final result of this paper, we determine a (weaker) variant of the Tur\'an number $ex_4^{\min}(P_5)$.
Namely, we restrict to instances of colored multi-graphs $M(G_1,G_2,G_3,G_4)$ with $E(G_1)\cup E(G_2)\cup E(G_3)\cup E(G_4)=\binom V2$. We will call such colored multi-graphs \emph{complete}. 

Formally, by $\overline{ex}_k^{\min}(n,\cF)$ we denote
$$\max\left\{\min_{1\le i\le k}|E(G_i)|: M(G_1, \ldots, G_k) \ \mbox{is a rainbow $\cF$-free complete multi-graph on $[n]$}\right\}.$$ Note that $\overline{ex}_k^{\min}(n,\cF)\le ex_k^{\min}(n,\cF)$.

In \cite{Magnant} Magnant determined the limit of $\overline{ex}_3^{\min}(n,K_3)$, which later, in \cite{ADGMS20}, was proved to match the limit of $ex_3^{\min}(n,K_3)$. 
In the case of $P_5$, however, the complete version yields a~much smaller threshold. In this case, we were able not only to find the limit, but also to pinpoint the Tur\'an number precisely.

\begin{theorem}\label{thm:P5_complete}
For any $n\ge 21$,
\[ \overline{ex}_4^{\min}(n,P_5)=
        \begin{cases}
        \binom{\lfloor n/3 \rfloor}2 & \mathrm{if}\quad n \not\equiv 2 \pmod 3\,,\\
        \binom{\lfloor n/3 \rfloor}2 + 1 & \mathrm{if}\quad n \equiv 2 \pmod 3\,.\\
        \end{cases}
\]
\end{theorem}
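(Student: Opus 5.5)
The plan is to prove matching lower and upper bounds. The lower bound comes from an explicit complete construction. Partition $V=A\cup B\cup C$ with $|A|\le|B|\le|C|$ and sizes as equal as possible. Let $G_1=K_n$, and let $G_2,G_3,G_4$ be the cliques on $A$, $B$, $C$ respectively. The multi-graph is complete because $G_1$ is. A rainbow $P_5$ has four edges of four distinct colors, so it uses exactly one edge of each of the colors $2,3,4$. These three edges lie inside the three pairwise disjoint parts, so joining them into a single path requires at least two connecting edges of color $1$. That gives at least $5$ edges, a contradiction. Hence $\min_i e(G_i)=\binom{\lfloor n/3\rfloor}2$.

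For $n\equiv 2\pmod 3$ we have $|A|=\lfloor n/3\rfloor$ and $|B|=|C|=\lfloor n/3\rfloor+1$, and the smallest clique must gain one edge. I would add to $G_2$ a single pendant edge $av$ with $a\in A$, $v\in B$, and then check case by case that no rainbow $P_5$ is created. Possibly $G_3$ or $G_4$ must first be trimmed by one edge, which is affordable since $\binom{m+1}2-1\ge\binom m2+1$. If this particular tweak fails, I would search among small local modifications of the partition, since the ``$+1$'' strongly suggests such a one-edge perturbation.

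For the upper bound, let $M$ be a complete rainbow $P_5$-free multi-graph and let every $G_i$ have at least $\binom{\lfloor n/3\rfloor}2+2$ edges (one more than claimed when $n\equiv2$). The structural claim I would aim for is the following: after renaming colors, three of the colors, say $2,3,4$, have edge sets essentially supported on pairwise disjoint vertex sets $V_2,V_3,V_4$, with only bounded exceptions. Then $\sum_j |V_j|\le n$, and some color has at most $\binom{|V_j|}2+O(1)\le\binom{\lfloor n/3\rfloor}2+O(1)$ edges. The constant would then be pinned down by a finer analysis. I would get the disjointness from local lemmas about rainbow $P_3$'s and $P_4$'s:
\begin{itemize}
\item If a vertex $v$ is incident to edges of two colors $i,j$, then long rainbow extensions at both ends are forbidden. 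This sharply restricts colors $\ell,m$ near $v$.
\item Completeness forces every pair to carry some color, and this supplies the ``connector'' edges that would complete a rainbow path.
\end{itemize}
Concretely, I would look at components of $G_i\cup G_j$ for pairs of colors. A component containing a rainbow $P_4$ in colors $i,j,\ell$ cannot touch any edge of the fourth color at its path endpoints. Combined with completeness, this forces one color to play the role of the ``glue'' color $1$ in the construction. The hypothesis $n\ge21$ would be used to discard small degenerate configurations in which some $G_i$ is star-like. A star-like $G_i$ has fewer than $n$ edges, which is below $\binom{\lfloor n/3\rfloor}2$ once $n\ge 21$.

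The main obstacle is turning these local obstructions into the global partition statement with exact additive constants rather than an asymptotic bound. I would first prove an approximate version, in which all but $O(n)$ edges of colors $2,3,4$ lie inside disjoint sets. I would then run a cleaning and stability argument: assume some color has an edge leaving its designated part, locate the resulting rainbow $P_5$ using completeness and the density of the other classes, and thereby show that at most one such stray edge can exist. That single stray edge would account exactly for the $+1$ in the case $n\equiv2\pmod 3$.
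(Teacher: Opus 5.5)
Your construction for $n\not\equiv 2\pmod 3$ is correct, but the proposed repair for $n\equiv 2\pmod 3$ fails. Suppose $av\in E(G_2)$ with $a\in A$ and $v\in B$. For any colour-$3$ neighbour $w\in B$ of $v$ and any $c,c'\in C$, the path $a\,v\,w\,c\,c'$ has colours $2,3,1,4$, which is a rainbow $P_5$. Trimming one edge of $G_3$ or $G_4$ does not help, since $v$ keeps at least $|B|-2$ colour-$3$ neighbours. An analogous rainbow path appears for every other possible extra colour-$2$ edge, because every vertex outside $A$ already has many colour-$3$ or colour-$4$ neighbours. The partition itself has to shrink. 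Take three parts of size $\lfloor n/3\rfloor$, leave the two remaining vertices $x,y$ outside, and put $xy$ into each of $G_2,G_3,G_4$, keeping $G_1=K_n$. A path uses the pair $xy$ at most once, so the three edges of colours $2,3,4$ in a rainbow path still lie in three pairwise disjoint sets. Your counting argument then applies verbatim.

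For the upper bound you give a strategy rather than an argument. The step you call the main obstacle is the whole proof. Moreover, a stability scheme with $O(n)$ error terms and ``density of the other classes'' could at best give the result for large $n$, not for every $n\ge 21$, where the parts have only $7$ vertices. The missing idea is an exact local-to-global mechanism, which runs as follows.

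First, a complete rainbow-$P_5$-free $M$ in which every colour has at least $n$ edges contains no rainbow $K_{1,3}$; this is where $n\ge 21$ enters. The proof is a finite case analysis on where the edges of the fourth colour lie relative to the claw.

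Next, by completeness every colour lies in some rainbow $P_3$. No two rainbow $P_3$'s have disjoint colour pairs: generically, the edge joining their centres, present by completeness, would create a rainbow claw. Pairwise intersecting pairs covering $\{1,2,3,4\}$ must form a star. So there is a single ``glue'' colour, say $1$, and no rainbow $P_3$ uses two colours from $\{2,3,4\}$.

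This forces an exact partition $V=V_1\cup V_{1,2}\cup V_{1,3}\cup V_{1,4}\cup S$. In it, every edge of $G_j$, $j\ge 2$, lies inside $V_{1,j}$ or is one of the $|S|/2$ edges of a matching of multi-edges on $S$. So ``bounded exceptions'' is not the right structure: $|S|$ is not a priori bounded. It is handled by the estimate $e(G_2)\le |S|/2+\binom{\lfloor (n-|S|)/3\rfloor}{2}$ for the smallest class. Once $|S|\ge 4$, the $|S|/2$ matching edges are outweighed by the $|S|$ vertices removed from the parts. The case $|S|=2$ is exactly the shared edge $xy$ responsible for the $+1$ when $n\equiv 2\pmod 3$.
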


An optimal construction, yielding the lower bound for $n\ge 21$, is obtained by splitting the vertex set into three equal sets $A_i$ of size $\lfloor n/3 \rfloor$, for $i \in\{1,2,3\}$, each forming a clique in $G_i$, and taking $G_4=K_n$.
If $n\equiv 2 \pmod 3$, then one can additionally connect the remaining two vertices by edges in each graph $G_i$. 
This gives at least $\binom{\left\lfloor\frac n3\right\rfloor}2\sim\frac{1}{18}n^2$ edges in each $G_1,G_2$, and $G_3$, while $e(G_4)=\binom n2$. 
However, to the same effect, one can take as $G_4$ any graph containing the multipartite complete graph with partition $A_1$, $A_2$, $A_3$ and $V \setminus (A_1 \cup A_2 \cup A_3)$.

The assumption $n\ge 21$ in Theorem~\ref{thm:P5_complete} is needed because the cases $n<21$ may have a~different bound. For example, for $n=20$ by considering each graph $G_1$, $G_2$, $G_3$ to be the star $K_{1,n-1}$ centered at the same vertex and $G_4=K_n$, we have at least $n-1=19$ edges in each color, while Theorem~\ref{thm:P5_complete} suggests only $\binom{6}{2}+1 = 16$ edges. 

\subsection{Notation} 

For a colored multi-graph $M:=M(G_1, \ldots, G_k)$ on a set $V(M)$, let $e(M)=\sum_{i=1}^ke(G_i)$, where \hbox{$e(G_i)=|E(G_i)|$}. For a vertex $v\in V(M)$ and $i\in[k]$, let $d_i(v)$ denote the degree of $v$ in $G_i$ and $d(v) = d_1(v) + \ldots + d_k(v)$ -- the total degree of $v$ in $M$. Further, let $N_i(v)$ be the neighborhood of $v$ in graph $G_i$ and $N(v) = \bigcup_{i \in [k]} N_i(v)$. The \emph{multiplicity} of an edge $e = uv$ in $M$, denoted by $m(e)$ or $m(u,v)$, is defined as the number of graphs $G_i$ for which $uv \in E(G_i)$. 

The following notation will be used in Sections~\ref{Max_min} and~\ref{complete}.
For a graph $G$, let 
$$\overline{V}(G)=\{v\in V(G): d(v)>0\}$$
be the set of all non-isolated vertices of $G$. Given a $k$-colored multi-graph $M$ and a vertex $v\in V(M)$, let $\overline{V}(M) = \bigcup_{i=1}^{k}\overline{V}(G_i)$ and $Spe(v)=\{i\in[k]: v\in \overline{V}(G_i)\}$ be the \emph{spectrum} of~$v$. 
Set also $spe(v)=|Spe(v)|$ and note that $i\in Spe(v)$ if and only if $d_i(v)\ge1$.

\subsection{Homomorphisms} 

The standard Tur\'an number of a graph $F$ is equal, up to small $o(n^2)$ term, the Tur\'an number of any homomorphic image of $F$, which is usually easier to determine. As observed in~\cite{IKLS25}, a~similar phenomena holds for rainbow Tur\'an numbers.

For a given multi-graph $F$, let $Hom(F)$ be the set of all pairwise non-isomorphic multi-graphs $H$, for which there is a mapping $h:V(F)\to V(H)$ such that for every edge $uv\in E(F)$, its image  $h(u)h(v)\in E(H)$, and for different edges of $F$ their images are different edges of~$H$ (such mulit-graphs $H$ are called in~\cite[Definition 2.2]{IKLS25} edge-preserving homomorphic images of $F$).

For $F=P_4$ there are four, and for $F=P_5$ there are nine multi-graphs in $Hom(F)$ (see Figure \ref{Fig:path-homomorphic-images}), which correspond to the walks of length, respectively, three and four, with multiple edges whenever the walk passes through an edge multiple times. 
In Sections~\ref{Max_min} and~\ref{complete} we denote by $W_4$ any (unspecified) element of $Hom(P_5)$, that is, any walk of length four. Analogously, by $W_3$ we denote any element of $Hom(P_4)$. 

\begin{figure}[ht]

\begin{center}
\begin{tikzpicture}[scale=0.9]
    \node at (-1,1) {$W_3$};

    \filldraw [black] (0,0) circle (2pt);
    \filldraw [black] (1,0) circle (2pt);
    \filldraw [black] (2,0) circle (2pt);
    \filldraw [black] (3,0) circle (2pt);
    \draw[thick] (0,0) -- (3,0);

    \filldraw [black] (5,0) circle (2pt);
    \filldraw [black] (6.5,0) circle (2pt);
    \draw[thick] (5,0) -- (6.5,0);
    \draw[thick] (5,0) .. controls (5.5,0.5) and (6,0.5) .. (6.5,0);
    \draw[thick] (5,0) .. controls (5.4,-0.5) and (6.1,-0.5) .. (6.5,0);
    
    \filldraw [black] (8.5,0) circle (2pt);
    \filldraw [black] (9.5,0) circle (2pt);
    \filldraw [black] (11,0) circle (2pt);
    \draw[thick] (8.5,0) -- (9.5,0);
    \draw[thick] (9.5,0) .. controls (10,0.5) and (10.5,0.5) .. (11,0);
    \draw[thick] (9.5,0) .. controls (10,-0.5) and (10.5,-0.5) .. (11,0);

    \filldraw [black] (12.8,-0.6) circle (2pt);
    \filldraw [black] (13.6,0.6) circle (2pt);
    \filldraw [black] (14.4,-0.6) circle (2pt);
    \draw[thick] (12.8,-0.6) -- (13.6,0.6) -- (14.4,-0.6) -- (12.8,-0.6);

    \node at (-1,-2) {$W_4$};
    \filldraw [black] (0,-3) circle (2pt);
    \filldraw [black] (1,-3) circle (2pt);
    \filldraw [black] (2,-3) circle (2pt);
    \filldraw [black] (3,-3) circle (2pt);
    \filldraw [black] (4,-3) circle (2pt);
    \draw[thick] (0,-3) -- (4,-3);

    \filldraw [black] (6,-3) circle (2pt);
    \filldraw [black] (7.5,-3) circle (2pt);
    \draw[thick] (6,-3) .. controls (6.5,-2.8) and (7,-2.8) .. (7.5,-3);
    \draw[thick] (6,-3) .. controls (6.5,-2.4) and (7,-2.4) .. (7.5,-3);
    \draw[thick] (6,-3) .. controls (6.5,-3.6) and (7,-3.6) .. (7.5,-3);
    \draw[thick] (6,-3) .. controls (6.5,-3.2) and (7,-3.2) .. (7.5,-3);

    \filldraw [black] (9.5,-3) circle (2pt);
    \filldraw [black] (10.5,-3) circle (2pt);
    \filldraw [black] (12,-3) circle (2pt);
    \draw[thick] (9.5,-3) -- (12,-3);
    \draw[thick] (10.5,-3) .. controls (11,-2.5) and (11.5,-2.5) .. (12,-3);
    \draw[thick] (10.5,-3) .. controls (11,-3.5) and (11.5,-3.5) .. (12,-3);

    \filldraw [black] (0,-5) circle (2pt);
    \filldraw [black] (1,-5) circle (2pt);
    \filldraw [black] (2,-5) circle (2pt);
    \filldraw [black] (3.5,-5) circle (2pt);
    \draw[thick] (0,-5) -- (2,-5);
    \draw[thick] (2,-5) .. controls (2.5,-4.5) and (3,-4.5) .. (3.5,-5);
    \draw[thick] (2,-5) .. controls (2.5,-5.5) and (3,-5.5) .. (3.5,-5);

    \filldraw [black] (5.5,-5) circle (2pt);
    \filldraw [black] (7,-5) circle (2pt);
    \filldraw [black] (8.5,-5) circle (2pt);
    \draw[thick] (5.5,-5) .. controls (6,-4.5) and (6.5,-4.5) .. (7,-5);
    \draw[thick] (5.5,-5) .. controls (6,-5.5) and (6.5,-5.5) .. (7,-5);
    \draw[thick] (7,-5) .. controls (7.5,-4.5) and (8,-4.5) .. (8.5,-5);
    \draw[thick] (7,-5) .. controls (7.5,-5.5) and (8,-5.5) .. (8.5,-5);

    \filldraw [black] (10.3,-5.75) circle (2pt);
    \filldraw [black] (11.3,-5.75) circle (2pt);
    \filldraw [black] (12.3,-5.75) circle (2pt);
    \filldraw [black] (11.3,-4.25) circle (2pt);
    \draw[thick] (10.3,-5.75) -- (12.3,-5.75);
    \draw[thick] (11.3,-4.25) .. controls (10.8,-4.6) and (10.8,-5.4) .. (11.3,-5.75);
    \draw[thick] (11.3,-4.25) .. controls (11.8,-4.6) and (11.8,-5.4) .. (11.3,-5.75);

    \filldraw [black] (0.5,-8.1) circle (2pt);
    \filldraw [black] (1.3,-6.9) circle (2pt);
    \filldraw [black] (2.1,-8.1) circle (2pt);
    \draw[thick] (0.5,-8.1) -- (1.3,-6.9) -- (2.1,-8.1);
    \draw[thick] (0.5,-8.1) .. controls (0.9,-7.6) and (1.7,-7.6) .. (2.1,-8.1);
    \draw[thick] (0.5,-8.1) .. controls (0.9,-8.6) and (1.7,-8.6) .. (2.1,-8.1);

    \filldraw [black] (4.6,-7.7) circle (2pt);
    \filldraw [black] (5.6,-7.7) circle (2pt);
    \filldraw [black] (6.8,-6.9) circle (2pt);
    \filldraw [black] (6.8,-8.5) circle (2pt);
    \draw[thick] (4.6,-7.7) -- (5.6,-7.7) -- (6.8,-6.9) -- (6.8,-8.5) -- (5.6,-7.7);
    
    \filldraw [black] (9.8,-6.9) circle (2pt);
    \filldraw [black] (9.8,-8.5) circle (2pt);
    \filldraw [black] (11.4,-6.9) circle (2pt);
    \filldraw [black] (11.4,-8.5) circle (2pt);
    \draw[thick] (9.8,-6.9) -- (9.8,-8.5) -- (11.4,-8.5) -- (11.4,-6.9) -- (9.8,-6.9);

\end{tikzpicture}
\caption{Multi-graphs in $Hom(P_4)$ and $Hom(P_5)$ corresponding to the walks of length 3 and 4.}\label{Fig:path-homomorphic-images}
\end{center}
\end{figure}

Since $ex^{\sm}_k(n,Hom(F))\le ex^{\sm}_k(n,F)$, it is relatively easier to give upper bounds on the former. As a striking example, we now present   one line proofs of the analogs of  Theorems~\ref{thm:p4sum} and~\ref{thm:p5sum}, which state that for all $n\ge1$ and $k\ge3$, $ex_k^{\sm}(n,Hom(P_4))=2\binom n2$, while for all $n\ge1$ and $k\ge4$, $ex_k^{\sm}(n,Hom(P_5))=3\binom n2$. The lower bounds follow from~\eqref{clique_bound}. For the first upper bound, suppose that $\sum_{i=1}^k e(G_i)>2\binom n2$. Then, by the pigeon-hole principle, there is a~pair of vertices $u,v$ connected by at least 3 edges, so it spans a rainbow walk $uvuv$ belonging to $Hom(P_4)$. The proof for $P_5$ is similar.

However, we will truly acknowledge the benefit of working with $Hom(F)$ rather than $F$ alone, when proving Theorem~\ref{thm:p5min}. This simplified approach has been furnished by \cite[Theorem~4.4]{IKLS25} (with injective $\psi:E(F)\to[k]$)  implying that

\begin{equation}\label{pipi}
    \pi^{\min}_k(F)=\pi^{\min}_k(Hom(F)).
\end{equation}

\subsection{Organization of the paper}

In Section~\ref{Max_sum} we prove Theorems~\ref{thm:p4sum} and~\ref{thm:p5sum}, where the sum of the number of edges is maximized, while in Section~\ref{Max_min} we deal with maximizing the minimum number of edges over all graphs and show Theorem~\ref{thm:p5min}. 
Section~\ref{complete} is devoted to the proof of Theorem~\ref{thm:P5_complete} considering the most interesting case of $4$ colors, but with an additional assumption that the colored multi-graph is complete. 
The final Section~\ref{fin_rem} presents our conjecture about the value of $\pi_k^{\min}(P_5)$ for all $k$.

\section{Maximizing the sum}\label{Max_sum}

In this section, we prove Theorems~\ref{thm:p4sum} and~\ref{thm:p5sum}. 
Since the lower bounds follow from~\eqref{clique_bound} and~\eqref{Turan_bound}, we focus exclusively on the upper bounds and, for that reason, we slightly restate both statements. In the proofs, we will be frequently relying (often silently) on an observation we might call, resp., the \emph{1-2-3 rule} and \emph{1-2-3-4 rule}.
The first of them says that any path of length three in $M$ with edge-multiplicities  at least 1, at least 2, and at least 3 (in any order), contains a rainbow $P_4$; the second rule is analogous and implies the existence of a~rainbow~$P_5$. 

\subsection{Proof of Theorem~\ref{thm:p4sum}}

In view of the lower bounds presented in Section~\ref{Isum}, it suffices to prove the following statement.

\begin{theorem}\label{thm:p4sum1}
    For all integers $n$ and $k\geq3$, every collection of graphs $G_1, \ldots, G_k$ on a common set of $n$ vertices and containing no rainbow $P_4$, satisfies
    \begin{equation}\label{sum_bound}\sum_{i=1}^k e(G_i) \leq
        \begin{cases}
        n^2-n & \mathrm{if}\quad k \leq n-1\,,\\
        kn & \mathrm{if}\quad k\geq n-1\,.\end{cases}
    \end{equation}
\end{theorem}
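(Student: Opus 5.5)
Note first that both bounds can be written as $\max(k,n-1)\cdot n$: when $k\le n-1$ it reads $n(n-1)=2\binom n2$, and when $k\ge n-1$ it reads $kn$. The unifying statement to prove is therefore
\[
\sum_i e(G_i)\le n\max(k,n-1),
\]
and I would prove it by induction on $n$. The small cases $n\le 3$ are checked directly. For $n\le 3$ any rainbow $P_4$ needs four vertices, so every configuration is allowed. The bound then reads $kn\ge k\binom n2$, which holds since $\binom n2\le n$ for $n\le3$.

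For the induction step, take a counterexample $M$ on $n$ vertices with $\sum e(G_i)>n\max(k,n-1)$. Deleting a vertex $v$ and applying induction to the remaining $n-1$ vertices gives
\[
d(v)> n\max(k,n-1)-(n-1)\max(k,n-2)\ge \max(k,n-1)
\]
for every $v$. So it suffices to show that some vertex has total degree $d(v)\le\max(k,n-1)$.

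To find such a vertex I would use the structure forced by the 1-2-3 rule, i.e.\ analyse edges of multiplicity $\ge 3$.

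\textbf{Case (a): some edge $uv$ has $m(u,v)\ge3$.} Any other edge $vw$ with $m(v,w)\ge 2$ would make $xuvw$ a rainbow $P_4$, where $x$ is any neighbour of $u$ outside $\{v,w\}$. More precisely, look at the component containing $uv$. Every edge $uv$ with $m\ge3$ forces all further edges at $u$ and at $v$ to be simple. Moreover, those simple edges cannot continue into further edges: a path $x\,u\,v$ extended to $y$ with colors chosen greedily is rainbow as soon as the middle edge has multiplicity $\ge3$. Hence the component of a heavy edge is essentially a double star with simple pendant edges, and the pendant vertices have no other edges. One checks that such a component is a "star-like" piece in which a leaf has degree $1$, or, if it is just the edge $uv$, then $d(u)\le k$. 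Either way we get a vertex of degree $\le\max(k,n-1)$. Alternatively, removing the component $C$ and using induction: its contribution is at most $k+(|C|-2)\cdot(\text{small})$, which is $\le |C|\max(k,n-1)$.

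\textbf{Case (b): all multiplicities are $\le 2$.} Then $d(v)\le 2(n-1)$, which is not good enough by itself. I would show a vertex with $d(v)\le n-1$ exists, or otherwise bound the count directly. Consider the graph $D$ of double edges. A path $xyzw$ whose middle edge $yz$ is double and whose outer edges are double also forms a rainbow $P_4$ unless the colors coincide. Here color coincidences are the real constraint: with only multiplicity $\le2$ the 1-2-3 rule does not directly apply, so we need color analysis. Two double edges $yz$ and $zw$ sharing $z$ must essentially use the same color pair, or $P_4$s appear with any third edge. I expect this to be the main obstacle: classifying the local color patterns of double edges (same pair $\{a,b\}$ on a whole component of $D$) and then showing that each component of $D$ using colors $\{a,b\}$ can receive only edges of colors $a,b$ elsewhere, leading to an averaging argument that some vertex has $d(v)\le n-1$. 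I would try first to show every vertex $v$ with $d(v)>n-1$ has at least two double edges, and derive that these force all its neighbours into a single 2-colored clique-like block, where degrees are bounded by $2(|B|-1)\le n-1$ unless $B$ is nearly everything, in which case the total is at most $2\binom n2$ directly.
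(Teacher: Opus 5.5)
Your setup is correct and matches the paper: you rewrite the bound as $n\max(k,n-1)$, check $n\le3$, and delete a vertex to get $\delta(M)>\max(k,n-1)$, i.e.\ $\delta(M)\ge\max\{n,k+1\}$. Neither of the two cases that follow is actually proved, though.

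\textbf{Case (b).} You leave this as a plan (``I would try\ldots''), but it is trivial. If all multiplicities are at most $2$, then $\sum_i e(G_i)\le 2\binom n2=n(n-1)\le n\max(k,n-1)$. So a counterexample must contain an edge $uv$ with $m(u,v)\ge3$, and Case (b) never arises.

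\textbf{Case (a).} This is where the work lies, and your structural claims are false as stated.
\begin{itemize}
\item Edges at $u$ and $v$ other than $uv$ need not be simple. Your argument needs a neighbour of $u$ outside $\{v,w\}$, and none need exist.
\item A path $xuvy$ with simple end edges is rainbow only when those end edges have different colors.
\item A pendant vertex $x$ may have further edges of the same color as $xu$. For example, a heavy edge $uv$ with an arbitrary color-$1$ graph hanging at $u$ contains no rainbow $P_4$.
\end{itemize}
Most importantly, the target \emph{a vertex of degree at most $\max(k,n-1)$} cannot be reached locally. A triangle component with every edge of multiplicity $k$ contains no $P_4$ at all, yet each of its vertices has degree $2k>\max(k,n-1)$ when $k\ge n-1$. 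Such a component has to be removed as a whole, and your fallback count $k+(|C|-2)\cdot(\text{small})$ neither covers it nor comes with a valid classification of $C$.

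\textbf{The missing idea} is to feed the degree bound into the heavy-edge analysis. Since $d(x)\ge n$ and $x$ has only $n-1$ potential neighbours, every vertex $x$ has an incident edge of multiplicity at least $2$. Both $N(u)\setminus\{v\}$ and $N(v)\setminus\{u\}$ are nonempty, since $d(u),d(v)\ge k+1$.
\begin{itemize}
\item Suppose there were distinct $x\in N(u)\setminus\{v\}$ and $y\in N(v)\setminus\{u\}$. The 1-2-3 rule forces $m(u,x)=m(v,y)=1$. The double edge at $x$ must then go to $v$, since otherwise $zxuv$ is rainbow. Symmetrically $m(u,y)\ge2$, and now $yuvx$ is rainbow.
\item Hence $N(u)\cup N(v)=\{u,v,w\}$ for a single vertex $w$.
\item The same idea at $w$ gives $N(w)\subseteq\{u,v\}$. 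If $m(u,w)=m(v,w)=1$, a double edge $wt$ gives the rainbow path $uvwt$. Otherwise any $t\in N(w)\setminus\{u,v\}$ yields a rainbow path $twuv$ or $twvu$.
\end{itemize}
So $\{u,v,w\}$ is a component spanning at most $3k$ edges. Applying induction to the remaining $n-3$ vertices gives $\sum_i e(G_i)\le (n-3)\max(k,n-4)+3k\le n\max(k,n-1)$, the desired contradiction.
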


\begin{proof} We proceed by induction on $n\ge1$.
    For $n\in\{1,2,3\}$ the thesis clearly holds (for all $k\ge3$). Let $n \geq 4$, $k\ge3$, and  $M:=M(G_1,\dots,G_k)$ be the colored multi-graph on $n$ vertices. Suppose $M$ is rainbow $P_4$-free but, to the contrary, $e(M)$ exceeds the upper bound~\eqref{sum_bound}.
    We begin by setting a lower bound on the minimum degree $\delta(M)$.
    
    \begin{claim}\label{C1}
       We have $\delta(M)\geq \max\{n,k+1\}$.
    \end{claim}
    \begin{proof}
        To the contrary, assume that there exists a vertex $v$ with $d(v)\leq \max\{n,k+1\}-1$.
        If $k\leq n-2$, then $\max\{n,k+1\}-1 = n-1$ and, by the induction assumption applied to $M-v$,
        \[e(M)\leq (n-1)^2-(n-1)+n-1=(n-1)^2 \leq n^2-n,\]
        a contradiction to the assumption on $e(M)$.
        If $k \geq n-1$, then $\max\{n,k+1\}-1 = k$ and, similarly
        \[e(M)\leq k(n-1)+k= kn,\]
        again, a contradiction.
    \end{proof}

Since $kn \geq n^2-n$ for $k \geq n-1$, we have $e(M) > n^2-n = 2\binom{n}{2}$ regardless of $k$, which, by the pigeon-hole principle, implies that $M$ contains two vertices $u,v$ with $m(u,v)\geq 3$. We fix such $u$ and $v$ and prove another claim.

    \begin{claim}\label{C2}
        There exists a vertex $w\not\in\{u,v\}$ such that all neighbors of vertices $u$, $v$ and $w$ are in the set $\{u,v,w\}$.
    \end{claim}
    \begin{proof}
        Since $d(u) \ge k+1$ and $d(v) \ge k+1$ by Claim~\ref{C1}, $N(u) \setminus \{v\} \neq \emptyset$ and $N(v) \setminus \{u\} \neq \emptyset$.
    
        Assume first that there exist two distinct vertices $x$ and $y$, other than $u$ and $v$, such that $m(u,x) \geq 1$ and $m(v,y) \ge 1$. By the 1-2-3 rule, to avoid a rainbow path $xuvy$, we must have $m(u,x) = m(v,y) = 1$. Since, by Claim~\ref{C1}, $\delta(M)\geq n$, we infer that the vertex $x$ has a~neighbor $z$ such that $m(x,z) \geq 2$. To avoid a rainbow path $zxuv$, again by the 1-2-3 rule, we must have $z=v$, so $m(x,v) \geq 2$. By a symmetric argument, $m(y,u) \geq 2$, which creates a rainbow path $yuvx$.

        Therefore, there exists a vertex $w\in N(u)\cap N(v)$ and $N(u)\cup N(v) = \{u,v,w\}$. If $m(u,w) = m(v,w) = 1$, then, since by Claim~\ref{C1} we have $d(w)\geq n$, vertex $w$ has a neighbor $t \not \in \{u,v\}$ with $m(w,t) \geq 2$, resulting in a rainbow path $uvwt$. Thus, $m(u,w) \geq 2$ or $m(v,w) \geq 2$, either of which implies that $w$ does not have neighbors different than $u$ and $v$, since otherwise a rainbow path $P_4$ would be created. This completes the proof of the claim.
    \end{proof}

We may now finish the proof of Theorem~\ref{thm:p4sum1}. Let $u,v,w$ be as in Claim~\ref{C2} and, thus, form a connected component of $M$.
By removing  $u,v$ and $w$ from $M$ at most $3k$ edges are lost, so by the induction assumption applied to $M-\{u,v,w\}$,
if $k\leq n-4$, then
        \[e(M)\leq (n-3)^2-(n-3)+3k=n^2-n-6n+3k+12\leq n^2-n,\]
while if $k \geq n-3$, then
        \[e(M)\leq k(n-3)+3k=kn.\]
Note that for $k \leq n-1$ (and, in particular, for $k\in\{n-3,n-2,n-1\}$) we have $kn \leq n^2-n$, so, in each case the obtained bound on $e(M)$ contradicts the assumption on $M$.
\end{proof}

\subsection{Proof of Theorem~\ref{thm:p5sum}}

The proof of Theorem~\ref{thm:p5sum} is based on the same idea as that of Theorem~\ref{thm:p4sum1}, but, understandably, is much more involved. In view of the existing lower bounds, it suffices to prove the following.

\begin{theorem}\label{thm:p5sum1}
    For all integers $n$ and $k\geq4$, every collection of graphs $G_1, \ldots, G_k$ on a common set of $n$ vertices and containing no rainbow $P_5$, satisfies
    \begin{equation}\label{sum_bound2}\sum_{i=1}^k e(G_i) \leq
    \begin{cases}
    \frac{3}{2}(n^2-n) & \mathrm{if}\quad k \leq n-1\,,\\
    \frac{3}{2}kn & \mathrm{if}\quad k\geq n-1\,.\end{cases}
    \end{equation}
\end{theorem}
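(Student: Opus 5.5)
We mimic the proof of Theorem~\ref{thm:p4sum1} and proceed by induction on $n$. The base cases $n\le 4$ are immediate: for $n\le 4$ we have $k\ge 4\ge n-1$, and $\frac32 kn\ge k\binom n2$, which is the trivial bound. For $n\ge5$, suppose that $M=M(G_1,\dots,G_k)$ is rainbow $P_5$-free and that $e(M)$ exceeds the bound \eqref{sum_bound2}.

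\textbf{Step 1: minimum degree.} We first show the analogue of Claim~\ref{C1}, namely $\delta(M)\ge \max\{\frac32 n,\ \frac32 k+1\}$, or at least a version strong enough for what follows. Deleting a vertex of small degree and applying induction gives $\frac32(n-1)^2-\frac32(n-1)+\frac32(n-1)\le \frac32(n^2-n)$ when $k\le n-2$, and $\frac32k(n-1)+\frac32k=\frac32kn$ when $k\ge n-1$. Since $\frac32 kn\ge 3\binom n2$ for $k\ge n-1$, in all cases $e(M)>3\binom n2$. By pigeonhole there is a pair $u,v$ with $m(u,v)\ge4$.

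\textbf{Step 2: local structure around a heavy edge.} Using the 1-2-3-4 rule repeatedly, we aim to show that the component of $M$ containing $u,v$ has exactly four vertices $\{u,v,w,z\}$, with no edges leaving them. The heuristic is that a heavy edge of multiplicity at least $4$ forces everything nearby to be heavy, and heavy edges cannot have long neighbourhoods.

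First, consider any path $x\,u\,v\,y$ with $x\ne y$ outside $\{u,v\}$. A rainbow $P_4$ on it extends to a rainbow $P_5$ as soon as one of $x,y$ has a further neighbour in an unused colour. The large minimum degree guarantees such neighbours, so both ends are heavily restricted. In particular, edges $ux$ of multiplicity at least $2$ together with $vy$ of multiplicity at least $3$ are forbidden.

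We would then split into cases. In the first case, $u$ and $v$ have distinct outside neighbours. We would show that these neighbours have multiplicity $1$ and, via degree counting as in Claim~\ref{C2}, derive a contradiction. In the second case, the joint neighbourhood $N(u)\cup N(v)\setminus\{u,v\}$ is small, with at most two vertices $w,z$. We then show that any further neighbour $t$ of $w$ or $z$ creates a rainbow $P_5$ of the form $t\,w\,u\,v\,z$ or $t\,w\,v\,u\,\cdot$. Here a degree bound is needed: since $d(w)\ge\frac32 n$ and $w$ has at most $k$ parallel edges to each of $u,v,z$, we can find a neighbour of $w$ with a useful multiplicity. The goal is that $\{u,v,w,z\}$, or possibly a smaller set, forms a connected component of $M$.

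\textbf{Step 3: conclusion.} If a component $C$ of size $c\le4$ has been isolated, it carries at most $k\binom c2\le \frac32 kc$ edges when $c\le 4$. Removing $C$ and applying induction gives $\frac32k(n-c)+\frac32kc=\frac32kn$ in the large-$k$ regime. In the regime $k\le n-c-1$ we get $\frac32(n-c)(n-c-1)+\frac32kc\le\frac32(n^2-n)$, which is checked as in Theorem~\ref{thm:p4sum1} using $k\le n-1$. The boundary values of $k$ are handled by noting that $\frac32kn\le\frac32(n^2-n)$ for $k\le n-1$.

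\textbf{Main obstacle.} We expect Step 2 to be the hardest part, because of the many multiplicity configurations involved. Paths of length four allow patterns such as 1-1-4-1 or 2-1-4-1 that are not immediately rainbow, so the minimum-degree condition must be exploited more cleverly than for $P_4$. If $\delta\ge\frac32 n$ alone is insufficient, we would first try a stronger reduction. Among all counterexamples we would choose one minimizing $n$, so that deleting any $2$ or $3$ vertices must cost many edges. This gives lower bounds on $d(x)+d(y)-m(x,y)$ for pairs, which should supply the extra heavy neighbours the case analysis needs.
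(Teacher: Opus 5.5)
\paragraph{Gap in Step~2.} Step~2 is only a plan, and its target cannot be reached with the tools you list. That target is that the heavy edge $uv$ lies in a component with at most four vertices; your tools are the minimum degree, the 1-2-3-4 rule, and deletions of two or three vertices.

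Here is a configuration that satisfies all of these and still violates the target. Take $n=k=20$, $G_1=G_2=K_n$ and $G_3=\dots=G_k=\{uv\}$.
\begin{itemize}
\item There is no rainbow $P_5$. A path on five vertices uses the pair $uv$ at most once, so at least three of its edges are coloured from $\{1,2\}$.
\item $\delta(M)=2(n-1)=38\ge\max\{\frac32n,\frac32k+1\}$.
\item $e(M)-e(M-S)>\frac32sk$ for every set $S$ of $s\le4$ vertices. For instance, four vertices other than $u,v$ give $140>120$.
\end{itemize}
Nevertheless $m(u,v)=20$ and the component of $uv$ is all of $V$. Moreover, $u$ and $v$ have distinct outside neighbours, each joined with multiplicity $2$. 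So two things fail: the assertion in your first case that such neighbours have multiplicity $1$ and yield a contradiction, and the heuristic that a heavy edge forces its surroundings to be heavy.

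What excludes this configuration is purely global: $e(M)=398$ is far below $3\binom n2$, and deleting large vertex sets is cheap. Hence no case analysis confined to a bounded neighbourhood of $uv$, using only the minimum degree and deletions of boundedly many vertices, can isolate a small component. (A minor point: for $k=n-1$, Step~1 only gives $d(v)>\frac32(n-1)$, not $d(v)\ge\frac32 n$.)

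\paragraph{The missing idea.} Aim for a weaker local structure, one that the example above already satisfies, and finish with a global count. Call an edge thick if its multiplicity is at least $4$.
\begin{itemize}
\item The paper proves that there are no $u,v,w$ with $m(u,v)\ge3$ and $m(u,w)\ge4$ (Claim~\ref{cla:p5sum_no43}).
\item Its key input is Claim~\ref{cla:p5sum_star}, whose proof deletes a set $A$ of unbounded size. This is why deletions of a bounded number of vertices do not suffice.
\item Consequently the thick edges form a matching, every other edge meeting a thick edge has multiplicity at most $2$, and all remaining edges have multiplicity at most $3$.
\end{itemize}
Summing multiplicities over all pairs then gives $e(M)\le\frac32(n^2-n)+t(2t+k-2n-1)$, where $t$ is the number of thick edges and $2t\le n$. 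This is at most the right-hand side of \eqref{sum_bound2} in both ranges of $k$, so no component ever needs to be isolated.
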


\begin{proof}
    We proceed by induction on $n$. For $n \leq 4$ the thesis clearly holds (for all $k\geq 4$). Let $n\ge5$, $k\ge4$, and  $M:=M(G_1,\dots,G_k)$ be the colored multi-graph on $n$ vertices. Suppose $M$ is rainbow $P_5$-free but, to the contrary, $e(M)$ exceeds the upper bound~\eqref{sum_bound2}.
    As before, we begin with proving a lower bound on $\delta(M)$.

    \begin{claim}\label{cla:p5sum_degree}
        We have $\delta(M) \geq n + 2$. It follows that for every vertex $x$ there is  vertex $y$ such that $m(x,y)\ge2$.
    \end{claim}
    \begin{proof}
        To the contrary, assume that there exists a vertex $v$ with $d(v) \leq n+1$. If $k\le n-2$, then, by the induction assumption applied to $M-v$,
        \[e(M)\leq \frac{3}{2}(n-1)^2-\frac{3}{2}(n-1)+n+1=\frac{3}{2}n^2-\frac{3}{2}n -2n+4 \leq \frac{3}{2}(n^2-n).\]
        If, on the other hand, $k \geq n-1$, then
        \[e(M)\leq \frac{3}{2}k(n-1)+n+1 \leq \frac{3}{2}kn -\frac{1}{2}k + 2 \leq \frac{3}{2}kn,\]
        since $k \geq 4$. This contradicts the assumption on the number of edges of $M$.
    \end{proof}

    We also need a similar claim to the one above, but allowing a removal of more vertices from $M$.

    \begin{claim}\label{cla:p5sum_set}
        Every set $S$ of $s$ vertices satisfies $e(M)-e(M-S) \geq \left\lceil\frac{3}{2}sk+\frac{1}{2}\right\rceil$. In particular,
        \begin{equation}\label{deltaM}
            \delta(M) \geq \left\lceil\frac{3}{2}k+\frac{1}{2}\right\rceil \geq k+3.
        \end{equation}
    \end{claim}
    \begin{proof}
        Assuming to the contrary that $e(M)-e(M-S) \leq \frac{3}{2}sk$ and using the induction assumption for $M-S$, one gets that if $k \leq n-s-1$, then
        \[e(M) \leq \frac{3}{2}(n-s)^2 - \frac{3}{2}(n-s) + \frac{3}{2}sk =
        \frac{3}{2}n^2-\frac{3}{2}n -\frac{3}{2}s(2n-s-k-1) \leq  \frac{3}{2}n^2-\frac{3}{2}n,\]
        while if $k \geq n-s$, then
        \[e(M) \leq \frac{3}{2}k(n-s) + \frac{3}{2}sk = \frac{3}{2}kn.\]
        Since $\frac{3}{2}kn \leq \frac{3}{2}(n^2-n)$ for $k \leq n-1$, we again obtain a contradiction with the assumption on $e(M)$.
    \end{proof}

    In the next three claims, some structures are excluded from appearing in $M$. Ultimately, in the main proof only the last one, Claim~\ref{cla:p5sum_no43}, is used, while the first two help to prove it. Still, we believe that the next claim lies at the heart of the proof of Theorem~\ref{thm:p5sum1}.

    \begin{claim}\label{cla:p5sum_star}
        There is no vertex $u$ in $M$ whose neighborhood $N(u)$ contains four  vertices $v_1$, $v_2$, $v_3$, $v_4$, such that $m(u,v_1) \geq 3$ and $m(u,v_i) \geq 4$, for $i\in\{2,3,4\}$.
    \end{claim}
    \begin{proof}
        Suppose, to the contrary, that such a vertex $u$ exists, and let $A$ be the largest subset of $N(u)$ such that for all but one vertices $v\in A$ we have $m(u,v)\ge4$, while for the remaining vertex in $A$, call it $w$, $m(u,w)\ge3$. By assumption, $a:=|A|\ge 4$.

        We first show that for every $x\in A$ and $y\neq u$, $m(x,y)\le 1$.
        To see this, assume that  $m(x,y) \geq 2$. It follows, by the 1-2-3-4 rule, that for every  vertex $z \in A\setminus\{x,y\}$, we must have $N(z)=\{u\}$. Indeed, if $m(z,t)\ge1$ for some $z\in A\setminus\{x,y\}$ and $t\not\in\{u,x,y\}$, then the 4-path $tzuxy$ would have multiplicities at least $1,3,4,2$ or $1,4,3,2$ (either $m(z,u)\ge 4$ or $m(u,x)\ge4$ and both are at least 3), leading to a rainbow $P_5$.
        But also, if either $m(x,z) \geq 1$ or $m(y,z) \geq 1$, then we have a rainbow $P_5$ ($yxzut$ or $zyxut$), using a vertex $t\in A$ different from $x,y,z$ (note that $t$ exists, since $|A|\ge4$). Thus, $N(z)=\{u\}$, so  $d(z)\le k$, which contradicts~\eqref{deltaM}.

        Next, we claim that for every $x\in A$,  $N(x)\subseteq A\cup\{u\}$. Indeed, let $y\in N(x)\setminus\{u\}$.
        Then, for every $t\neq u$ we must have $m(y,t) \leq 1$, because otherwise, using a fresh vertex $z\in A \setminus\{x,y,t\}$, the path $zuxyt$ would contain a rainbow $P_5$ (the 1-2-3-4 rule, again). Thus, by Claim~\ref{cla:p5sum_degree} applied to vertex $y$, and recalling that also $m(x,y)\le1$, one gets $m(y,u) \geq d(y)-(n-2)\ge 4$, that is, $y\in A$, by the definition of $A$.

        In summary,  there are at most $ak$ edges between $A$ and $\{u\}$, at most $\binom{a}{2}$ edges inside~$A$, and no edges between $A$ and $V(M)\setminus(A \cup \{u\})$. Therefore, by the induction assumption applied to $M-A$, if $k \leq n-a-1$, then
        \[e(M) \leq \frac{3}{2}(n-a)^2 - \frac{3}{2}(n-a) + ak + \binom a2 = \frac{3}{2}n^2-\frac{3}{2}n -a(3n-2a-k-1) \leq \frac{3}{2}n^2-\frac{3}{2}n,\]
        because $3n-2a-k-1\ge 2n-a>n>0$.
        If, on the other hand, $n-a\le k\le n-1$, then similarly,
        \begin{align*}
            e(M) &\leq \frac{3}{2}k(n-a) + ak +\binom a2 \leq \frac{3}{2}(n-1)(n-a)+a(n-1) + \frac{1}{2}a^2 - \frac{1}{2}a \\
            &= \frac{3}{2}n^2-\frac{3}{2}n - \frac{1}{2}a(n-a) \leq \frac{3}{2}n^2-\frac{3}{2}n.
        \end{align*}
        Finally,  if $k \geq n$, then
        \[e(M) \leq \frac{3}{2}k(n-a) + ak + \binom a2= \frac{3}{2}kn -\frac{1}{2}a(k-a+1) \leq \frac{3}{2}kn,\]
        because $k \geq n \geq a+1$. Thus, in all cases, we arrive at a contradiction with the assumption on $e(M)$.
    \end{proof}

    \begin{claim}\label{cla:p5sum_path}
        If three vertices $u,v,w$ satisfy $m(u,v) \geq 3$ and $m(u,w) \geq 4$, then $v$ and $w$ do not have different neighbors outside of the set $\{u,v,w\}$.
    \end{claim}
    \begin{proof}
        Suppose to the contrary that there are different vertices $x$ and $y$ not belonging to the set $\{u,v,w\}$ and such that $m(v,x) \geq 1$ and $m(w,y) \geq 1$. By the 1-2-3-4 rule, to avoid a rainbow $P_5$, we must have $m(v,x) = m(w,y) = 1$. By the same token, $m(x,y)\le1$, as well as $m(x,z)\le1$ and $m(y,z)\le 1$ for all $z\not\in\{u,v,w,x,y\}$.

        Suppose now that $m(x,w)\ge2$.
        It follows that $m(v,y)=0$, since otherwise there would be a rainbow path $yvuwx$. Thus, $\sum_{t\neq u}m(y,t)\le n-3$, and so, by Claim~\ref{cla:p5sum_degree},  $m(y,u) \geq 4$ (in fact, $m(y,u) \geq 5$), and there is a rainbow path $yuwxv$, a contradiction. Therefore, $m(x,w) \leq 1$, and so $\sum_{t\neq u}m(x,t)\le n-2$.  Hence, by  another application of Claim~\ref{cla:p5sum_degree}, $m(x,u) \geq 4$.

        By a symmetric argument we also have $m(y,u) \geq 4$. However, this contradicts Claim~\ref{cla:p5sum_star} (with $v_1=v$, and $\{v_2,v_3,v_4\}=\{w,x,y\}$).
    \end{proof}

    \begin{claim}\label{cla:p5sum_no43}
        There are no vertices $u,v,w$ such that $m(u,v) \geq 3$ and $m(u,w) \geq 4$.
    \end{claim}
    \begin{proof}
        Suppose to the contrary that there are such vertices and assume first that $v$ has at least two neighbors different from $u$ and $w$. Then, by Claim~\ref{cla:p5sum_path}, $N(w)\subseteq\{u,v\}$, so, by~\eqref{deltaM}, $d(w)=m(w,u)+m(w,v) \geq k+3$, which implies $m(w,v) \geq 3$. Thus, swapping $u$ and $w$ around, we again infer by Claim~\ref{cla:p5sum_path} that $N(u)\subseteq\{v,w\}$. This yields a contradiction with Claim~\ref{cla:p5sum_set} for $S=\{u,w\}$. Indeed,
        $$e(M)-e(M-S)=m(u,w)+m(u,v)+m(w,v)\le 3k.$$ 
        Thus, $v$ may have at most one neighbor outside $\{u,w\}$. An analogous argument shows that $w$ also has at most one neighbor outside $\{u,v\}$.

        By Claim~\ref{cla:p5sum_set}, for $S=\{v,w\}$, $v$ or $w$ must have a neighbor $x$ different from $u,v,w$, say $x\in N(v)$ (the case $x\in N(u)$ is analogous). By the conclusion of the previous paragraph and by Claim~\ref{cla:p5sum_path}, $N(v)\cup N(w)\subseteq\{u,v,w,x\}$. If vertex $x$ has a neighbor $z$ other than $u$, $v$ and $w$, then, to avoid a rainbow $P_5$, $m(x,y) \leq 1$ for all $y\neq u$, including $y=v$ and $y=w$ (1-2-3-4 rule again, see Fig.~\ref{F3_1}). By Claim~\ref{cla:p5sum_degree}, this implies that $m(x,u) \geq 4$, and, by Claim~\ref{cla:p5sum_path} applied to the triple $u,v,x$, we have $m(v,w)=0$. Thus, $d(w)\le k+1$, a~contradiction with~\eqref{deltaM}. Therefore, one can assume that $N(x)\subseteq\{u,v,w,x\}$.

\begin{figure}[ht]
\begin{center}
\begin{tikzpicture}[scale=1]

    \draw[thick] (4,0) -- (0,0) -- (2,2) -- (4,0) -- (6,0) (0,0) -- (2,-2) (4,0) -- (2,-2);
    \draw[thick, dashed] (2,2) -- (2,-2);

    \node at (0,0) [below] {$u$};
    \node at (2,2) [above] {$v$};
    \node at (4,0) [below] {$x$};
    \node at (2,-2) [below] {$w$};
    \node at (6,0) [below] {$z$};
    
    \node at (0.7,0.9) [above] {\footnotesize $\geq3$};
    \node at (0.7,-1) [below] {\footnotesize $\geq4$};
    \node at (3.3,0.9) [above] {\footnotesize $\leq1$};
    \node at (3.3,-1) [below] {\footnotesize $\leq1$};
    \node at (1.2,0) [above] {\footnotesize $\geq4$};
    \node at (5,0) [above] {\footnotesize $\geq1$};

    \draw[fill=black] (0,0) circle [radius=2pt];
    \draw[fill=black] (2,2) circle [radius=2pt];
    \draw[fill=black] (4,0) circle [radius=2pt];
    \draw[fill=black] (2,-2) circle [radius=2pt];
    \draw[fill=black] (6,0) circle [radius=2pt];

\end{tikzpicture}
\caption{A neighbour of $x$ outside $u,v,w$ leads to a contradiction on $d(w)$ in Claim~\ref{cla:p5sum_no43}.}
\label{F3_1}
\end{center}
\end{figure} 

        To avoid a contradiction with Claim~\ref{cla:p5sum_set} for $S = \{u,v,w,x\}$, vertex $u$ must have a neighbor $y$ outside of the set $\{u,v,w,x\}$ (see Fig.~\ref{F3}).
        We now show that $m(w,x)\ge1$. Indeed, if $m(w,x)=0$, then $d(w)=m(w,u) + m(w,v) \geq k+3$ by~\eqref{deltaM}, which gives a contradiction to Claim~\ref{cla:p5sum_path} for the triple $v,u,w$ (with $y$ and $x$ the witnesses). By three more ``\`a rebours'' applications of Claim~\ref{cla:p5sum_path}, we infer that $m(v,w) \leq 2$, $m(v,x) \leq 3$, and $m(w,x) \leq 2$. Moreover, $m(v,x) = 3$ and $m(w,x) = 2$ cannot hold simultaneously, since then one would have a~rainbow path $vxwuy$. Therefore, $m(v,w) + m(v,x) + m(w,x) \leq 6$. This, however, gives a~contradiction with Claim~\ref{cla:p5sum_set} for $S=\{v,w,x\}$, since $3k+6 \leq \frac{9}{2}k < \left\lceil\frac{9}{2}k+\frac{1}{2}\right\rceil$.
    \end{proof}

\begin{figure}[ht]
\begin{center}
\begin{tikzpicture}[scale=1]

    \draw[thick] (0,0) -- (2,2) -- (4,0) (-2,0) -- (0,0) -- (2,-2) (4,0) -- (2,-2) -- (2,2);

    \node at (0,0) [below] {$u$};
    \node at (2,2) [above] {$v$};
    \node at (4,0) [below] {$x$};
    \node at (2,-2) [below] {$w$};
    \node at (-2,0) [below] {$y$};
    
    \node at (0.7,0.9) [above] {\footnotesize $\geq3$};
    \node at (0.7,-1) [below] {\footnotesize $\geq4$};
    \node at (3.3,0.9) [above] {\footnotesize $\geq1$};
    \node at (3.3,1.3) [above] {\footnotesize $\leq3$};
    \node at (3.3,-1) [below] {\footnotesize $\geq1$};
    \node at (3.3,-1.4) [below] {\footnotesize $\leq2$};
    \node at (2,0.5) [right] {\footnotesize $\leq2$};
    \node at (-1,0) [above] {\footnotesize $\geq1$};
    
    \draw[fill=black] (0,0) circle [radius=2pt];
    \draw[fill=black] (2,2) circle [radius=2pt];
    \draw[fill=black] (4,0) circle [radius=2pt];
    \draw[fill=black] (2,-2) circle [radius=2pt];
    \draw[fill=black] (-2,0) circle [radius=2pt];

\end{tikzpicture}
\caption{Edge multiplicities yielding a rainbow path in Claim~\ref{cla:p5sum_no43}.}
\label{F3}
\end{center}
\end{figure} 

    We are ready to complete the proof of Theorem~\ref{thm:p5sum1}. We say that an edge $uv$ in $M$ is \emph{thick} if $m(u,v) \geq 4$. By Claim~\ref{cla:p5sum_no43}, thick edges are vertex disjoint. What is more, if $uv$ is a thick edge, then $m(u,w) \leq 2$ for every $w \not= v$. This implies that $M$ has the following structure: $V(M)=T\cup U$, where $T$ is the set of all endpoints of thick edges, all non-thick edges incident to vertices in $T$ have multiplicities at most 2, while all other edges, that is, those with both endpoints in $U$, have multiplicities at most 3.

    Let us bound $e(M)$ in terms of the number $t=|T|/2$  of thick edges in $M$. Note that $2t\le n$ and recall that multiplicity of each edge in $M$ is at most $k$. We have
    \begin{align*}
        e(M) &\leq 2\cdot \binom{2t}{2} + (k-2)\cdot t + 2\cdot 2t(n-2t) + 3\cdot \binom{n-2t}{2} \\
        &= \frac{3}{2}(n^2-n) + t(2t+k-2n-1)\\
        &\leq \frac{3}{2}(n^2-n) + t(k-n-1).
    \end{align*}

    \noindent If $k \leq n-1$, then $k-n-1 < 0$ and $e(M) \leq \frac{3}{2}(n^2-n)$, contradicting the assumption on~$M$.
    If, on the other hand, $k \geq n-1$, then
    \begin{align*}
        e(M) & \le \frac{3}{2}(n^2-n) + t(k-n+1) -2t \le \frac32(n^2-n)+\frac{n}2(k-n+1) \\
        & = n\left(n-1+\frac{k}2\right)\leq \frac32 kn, 
    \end{align*}
     a contradiction again.
\end{proof}

\section{Maximizing the minimum}\label{Max_min}

Here we prove Theorem~\ref{thm:p5min}, first for $k=6$, then for $k=5$, and finally for $k=9$.
This particular order is justified by using the result for  $k=5$  in the proof for $k=9$.
Of course, the case $k=6$ follows trivially from  that of $k=5$. Nevertheless, we present a short, independent proof of the former in order to introduce a proof technique to be used also in the latter.

In view of the lower bound~\eqref{kell2}, in all cases it suffices to prove an upper bound, which will be formulated each time as a separate statement. Moreover, based on identity~\eqref{pipi}, it suffices to prove it for $Hom(P_5)$, that is, the family of all walks $W_4$ of length four, instead of just for~$P_5$. In particular, for any multi-graph $M$ not containing a rainbow $W_4$, we will be assuming that
\begin{equation}\label{mle3}
\mbox{for every edge $e$ in $M$, we have $m(e)\le3$,}
\end{equation}
since a quadruple edge itself forms a rainbow $W_4$.

For any vertex $v$ of $M$, set 
$$t(v)=|\{u: m(v,u)=3\}|,\quad p(v)=|\{u: m(v,u)=2\}|\quad\mbox{and}\quad s(v)=|\{u: m(v,u)=1\}|,$$ and notice that, owing to~\eqref{mle3},
\begin{equation}\label{tps}
|N(v)|=t(v)+p(v)+s(v)\quad\mbox{and}\quad d(v)=3t(v)+2p(v)+s(v).
\end{equation}

We begin with a basic observation on $Spe(v)=\{i\in[k]: v\in \overline{V}(G_i)\}$. Recall that $spe(v)=|Spe(v)|$.
\begin{observation}\label{obs:double_3colors}
    For every $k\ge4$, $M=M(G_1, \ldots, G_k)$ with no  rainbow $W_4$, and $v\in V(M)$, if $p(v)+t(v)\ge1$, then $spe(v)\le3$. 
\end{observation}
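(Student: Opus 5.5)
Suppose $v$ has a neighbor $u$ with $m(v,u)\ge2$. We will show that $spe(v)\ge4$ produces a rainbow $W_4$ that uses the multi-edge $vu$ twice. Such a walk belongs to $Hom(P_5)$.

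Let $\alpha,\beta$ be two distinct colors present on the edge $vu$. Assume, for contradiction, that $spe(v)\ge4$. Then there are two colors $\gamma,\delta\in Spe(v)\setminus\{\alpha,\beta\}$. Pick an edge $vx$ of color $\gamma$ and an edge $vy$ of color $\delta$; here $x,y$ are arbitrary neighbors of $v$ in $G_\gamma$ and $G_\delta$, and $x,y,u$ need not be distinct.

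Now consider the walk
$$x\,\overset{\gamma}{-}\,v\,\overset{\alpha}{-}\,u\,\overset{\beta}{-}\,v\,\overset{\delta}{-}\,y.$$
It has length four and uses the four distinct colors $\gamma,\alpha,\beta,\delta$. Each step traverses a different colored edge of $M$: the two traversals of the pair $vu$ use the distinct colors $\alpha$ and $\beta$. Edges $vx$ and $vy$ carry colors $\gamma,\delta$, which differ from $\alpha,\beta$ and from each other. So even if $x=u$, $y=u$ or $x=y$, no colored edge is used twice. Hence this walk is a rainbow copy of some element of $Hom(P_5)$, namely a rainbow $W_4$, contradicting the assumption on $M$.

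The only point needing care is exactly this coincidence issue: $W_4$ is any edge-injective homomorphic image of $P_5$, so walks that revisit vertices are allowed as long as no colored edge repeats. The construction above guarantees this. Therefore $spe(v)\le3$.
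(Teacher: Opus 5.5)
Your proof is correct and is the paper's argument: take two colors on the multi-edge $vu$ and two further colors at $v$, and exhibit the rainbow walk $xvuvy$. Your explicit check that coincidences among $x,y,u$ still give an edge-injective image of $P_5$ is a detail the paper leaves implicit.
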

\begin{proof} 
The assumption $p(v)+t(v)\ge1$ implies that there exists a vertex $u$ satisfying \hbox{$uv \in E(G_i) \cap E(G_j)$} for $i \not=j$.
If $spe(v) \ge 4$, then there exist $x$ and $y$ such that $vx \in E(G_h)$ for $h \not\in\{i,j\}$ and $vy \in E(G_\ell)$ for $\ell \not\in\{i,j,h\}$. This gives the forbidden rainbow walk $xvuvy$. 
\end{proof}

We now introduce a crucial notion of the base graph.
For $k\ge3$, a colored multi-graph $M(G_1, \ldots, G_k)$ on vertex set $V$, and $1\le i <j<h\le k$, let $\bg = \bg(i, j, h)$ be the graph on $V$ in which $uv$ is an edge if and only if there exists a rainbow path of length two from $u$ to $v$ in $M(G_i, G_j, G_h)$. We will call  $\bg$ the \emph{base graph} of $G_i, G_j, G_h$.

The next result reveals the role of base graphs in our proofs.

\begin{claim}\label{cla:base}
For every $\eps>0$ there exists $n_0$ such that for all $n\ge n_0$ and every  collection of graphs $G_1, G_2, \ldots, G_k$, $k \geq 5$, on a common set $V$ of $n$ vertices satisfying $e(G_i)\ge\left(\frac{1}{8}+\eps\right)n^2$ for $i \in [k]$,
if there are indices $1\le i <j<h\le k$ such that the base graph  $\bg = \bg(i, j, h)$ satisfies $e(\bg)\ge \left(\frac{1}{8}+\eps'\right) n^2$ for some $\eps'>0$, 
then the multi-graph $M(G_1, G_2, \ldots, G_k)$ contains a rainbow $W_4$.
\end{claim}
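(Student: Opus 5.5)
The plan is to find a rainbow $W_4$ by extending a rainbow $2$-path of the base graph with edges of two colours $a,b\notin\{i,j,h\}$. Such colours exist because $k\ge5$. Walks are allowed to revisit vertices, and only the edges must be distinct (which is automatic since all colours differ), so very little is needed. Let $uv\in E(\bg)$ be witnessed by $u\,w\,v$ with colours $c_1,c_2\in\{i,j,h\}$. We get a rainbow $W_4$ as soon as one of the following holds:
\begin{itemize}
\item[(a)] $u$ is non-isolated in $G_a$ and $v$ is non-isolated in $G_b$ (walk $x\,u\,w\,v\,y$);
\item[(b)] $v$ has an $a$-neighbour $y$ that is non-isolated in $G_b$ (walk $u\,w\,v\,y\,z$);
\item[(c)] $w$ is non-isolated in $G_a$ and, say, $v$ is non-isolated in $G_b$ (walk $x\,w\,u$ reversed, i.e.\ $x\,w\,\ldots$ suitably ordered).
\end{itemize}
Option~(c) also requires that the $a$-edge at $w$ is not among the edges already used, which is automatic by colour. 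So I will assume $M$ has no rainbow $W_4$ and translate each forbidden extension into a structural restriction.

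\textbf{Step 1: large non-isolated sets.} For every colour $c$ set $A_c=\overline V(G_c)$. From $e(G_c)\ge(\frac18+\eps)n^2$ and $e(G_c)\le\binom{|A_c|}2$ we get $|A_c|\ge(\frac12+\eps_1)n$. Hence any two of these sets meet in at least $2\eps_1 n$ vertices. Let $X=V\setminus(A_a\cup A_b)$ and $A=A_a\cap A_b$.

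\textbf{Step 2: where the edges of $\bg$ can lie.} Forbidding (a) for both orientations, a short case check shows that every edge of $\bg$ lies in one of the following:
\begin{itemize}
\item inside $V\setminus A_a$;
\item inside $V\setminus A_b$;
\item between $X$ and $A$.
\end{itemize}
Forbidding (b) adds the following restriction. Every endpoint $v\in A_a$ of a $\bg$-edge has all its $a$-neighbours in $X\cup(A_a\setminus A_b)$. Symmetrically, every endpoint $v\in A_b$ has all its $b$-neighbours in $X\cup(A_b\setminus A_a)$. In particular, if $v\in A$ is such an endpoint, then no $a$-edge at $v$ enters $A_b$ and no $b$-edge at $v$ enters $A_a$.

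Forbidding (c) says the following. If a middle vertex $w$ lies in $A_a\cup A_b$, then the endpoints it serves avoid the complementary set, so they are all in $X$ or on one side. Now take the extremal picture with $A_a=A_b$ of size about $n/2$. The $\bg$-edges can only go inside $X$ or between $X$ and $A$. For the edges between $X$ and $A$, (b) forces the $a$-edges and $b$-edges at the $A$-endpoint to leave $A$. But $X$ is isolated in both $G_a$ and $G_b$, so those $A$-endpoints are isolated in $G_a$ and $G_b$, contradicting that they lie in $A$. Therefore all $\bg$-edges sit inside $X$, and there are at most $\binom{|X|}2<n^2/8$ of them.

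\textbf{Step 3: the general count.} I will turn the previous paragraph into an inequality. Let $Y\subseteq A_a\cup A_b$ be the set of endpoints of $\bg$-edges lying in $A_a\cup A_b$. By (b), the $a$-edges (resp.\ $b$-edges) at vertices of $Y\cap A$ go into $A_a\setminus A_b$ (resp.\ $A_b\setminus A_a$). The main obstacle is to show that the total available room, namely
\[
\binom{|V\setminus A_a|}2+\binom{|V\setminus A_b|}2+|X|\cdot|Y\cap A|,
\]
together with the $\bg$-edges on $A_a\triangle A_b$, stays below $(\frac18+\eps')n^2$. This must use that $G_a$ and $G_b$ are dense. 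Their edges must mostly lie inside $A_a$ and $A_b$, and the restriction from (b) leaves $G_a$ with edges only among $A_a\setminus Y$ and between $Y\cap A$ and $A_a\setminus A_b$. Writing $\alpha=|A_a\setminus A_b|/n$, $\beta=|A_b\setminus A_a|/n$, $\gamma=|A|/n$, $\xi=|X|/n$ and $y=|Y\cap A|/n$, this gives a small optimisation problem. The constraints are
\[
e(G_a)/n^2\le \tfrac12(\alpha+\gamma-y)^2+y\alpha\quad\text{and}\quad e(G_b)/n^2\le \tfrac12(\beta+\gamma-y)^2+y\beta,
\]
both at least $\frac18+\eps$. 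The goal is to show that the admissible $\bg$-density is at most $\frac18$, with equality only at $\alpha=\beta=0$, $y=0$, $\gamma=\xi=\frac12$. I would first try to handle this by fixing $\gamma+\alpha$ and $\gamma+\beta$ and checking that the objective is convex in $y$, so that it suffices to check the endpoints $y=0$ and $y=\gamma$. The slack $\eps$ then yields the contradiction for $n\ge n_0$.

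If the optimisation turns out to be loose, I would additionally exploit the freedom to choose the pair $\{a,b\}$ among the at least two colours outside $\{i,j,h\}$. I would also use Observation~\ref{obs:double_3colors}: middle vertices $w$ joined to an endpoint by a double edge have spectrum at most $3$, hence lie outside $A_a\cup A_b$.
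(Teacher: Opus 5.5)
\textbf{There is a genuine gap.} Step~3 is not merely unfinished: the optimisation you set up is false as stated.

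In Step~2 you correctly derive from (b) that every $\bg$-endpoint $v\in A_a$ has $N_a(v)\subseteq A_a\setminus A_b$. In Step~3, however, you impose this only on $Y\cap A$. At $\bg$-endpoints in $A_a\setminus A_b$ the same condition forbids every $a$-edge into $A$, and your constraint $e(G_a)/n^2\le\frac12(\alpha+\gamma-y)^2+y\alpha$ does not record this.

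Concretely, take $\alpha=\beta=\frac12-\delta$, $\gamma=2\delta$, $\xi=0$, $y=0$. This satisfies both of your constraints, since each right-hand side equals $\frac12(\frac12+\delta)^2>\frac18+\eps$ once $\delta\ge 2\eps$. Yet Step~2 lets $\bg$ consist of cliques on $A_a\setminus A_b$ and on $A_b\setminus A_a$, which is about $n^2/4$ edges.

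Two further problems:
\begin{itemize}
\item Your room term $\binom{|V\setminus A_a|}2+\binom{|V\setminus A_b|}2$ counts the pairs inside $X$ twice. At your claimed equality point $\alpha=\beta=y=0$, $\gamma=\xi=\frac12$ it is already about $n^2/4$, not $n^2/8$.
\item Option (c) yields nothing. The four edges $uw,wv,wx,vy$ have four vertices of odd degree ($u,w,x,y$), so they do not form a walk. The restriction you derive from forbidding (c) is therefore unfounded.
\end{itemize}

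\textbf{The missing idea.} Conditions (a) and (b), taken with both orientations and both orders of $a,b$, are exactly the rainbow $W_3$'s of the three-coloured multigraph $M(\bg,G_a,G_b)$. So Steps~2--3 amount to reproving the case $k=3$ of Theorem~\ref{thm:p4min}, namely $\pi_3^{\min}(P_4)=\frac14$. That problem has several extremal configurations, for example:
\begin{itemize}
\item $G_a=G_b$ a clique on one half and $\bg$ a clique on the other;
\item $G_a$ and $\bg$ cliques on one half, $G_b$ and $\bg$ cliques on the other.
\end{itemize}
A crude room count would therefore need all the constraints and a genuine argument.

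\textbf{How the paper does it.} The paper's proof simply cites that theorem. Each of $\bg$, $G_a$, $G_b$ has at least $(\frac18+\min\{\eps,\eps'\})n^2\ge(\frac14+2\min\{\eps,\eps'\})\binom n2$ edges. Hence, for large $n$, $M(\bg,G_a,G_b)$ contains a rainbow $P_4$, which is of type (a) or (b). Replace its $\bg$-edge $uv$ by the witnessing path $uwv$, whose two edges have distinct colours from $\{i,j,h\}$. This gives a walk of length four in four distinct colours, i.e.\ a rainbow $W_4$.

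Your opening reduction to (a)/(b) is right. Replacing Steps~1--3 by this citation completes the proof.
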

\begin{proof}
W.l.o.g.~assume that $\{i,j,h\}=\{1,2,3\}$. By Theorem~\ref{thm:p4min} with $k=3$, for any $\epsilon, \epsilon'>0$ there exists $n_0$ such that any multi-graph $M(\bg, G_4, \ldots, G_k)$ on $n \geq n_0$ vertices satisfying $e(\bg)\ge \left(\frac{1}{8}+\eps'\right) n^2$ and $e(G_i)\ge\left(\frac{1}{8}+\eps\right)n^2$ for $4 \leq i \leq k$, contains a rainbow walk of length three. Upon replacing the edge of $\bg$ in that walk with a rainbow walk of length two in colors from $\{1,2,3\}$, we obtain a rainbow $W_4$ in $M(G_1, G_2, \ldots, G_k)$.	
\end{proof}

Below are a couple of simple counting facts useful for estimating the size of the base graph in various circumstances.
\begin{claim}\label{Poznanski_lemacik}
Let $M(G_1, \ldots, G_k)$ be a colored multi-graph on $n$ vertices and $v \in V(M)$.
\begin{itemize}
\item[(i)]
For any real $x \geq \frac{1}{2}n$ and any three colors $i,j,h$,  if $d_i(v)+d_j(v)+d_h(v)\ge 3x$, then $e(\bg(i,j,h))\ge\frac{1}{2}x^2$.

\item[(ii)]
If $Spe(v)\subseteq\{i,j,h\}$, then $e(\bg(i,j,h))\ge \binom{p(v)}2+p(v)s(v)$.
\end{itemize}
\end{claim}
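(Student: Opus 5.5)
Both parts are proved by exhibiting many pairs $\{a,b\}$ joined by a rainbow path $a\,v\,b$ through the fixed vertex $v$, using only colors from $\{i,j,h\}$. Each such pair is an edge of $\bg(i,j,h)$. So in each case it is enough to count unordered pairs of distinct neighbors $a,b$ of $v$ for which there are distinct colors $c\ne c'$ in $\{i,j,h\}$ with $va\in E(G_c)$ and $vb\in E(G_{c'})$.

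For (i), write $N_c=N_c(v)$ and $d_c=|N_c|$ for $c\in\{i,j,h\}$. A pair $\{a,b\}$ with $a\in N_c$, $b\in N_{c'}$, $c\ne c'$ and $a\ne b$ is an edge of $\bg$. Hence $\bg$ contains every pair $\{a,b\}\subseteq N_i\cup N_j\cup N_h$ except those for which one of the following holds:
\begin{itemize}
\item[(a)] both $a$ and $b$ are adjacent to $v$ in only one and the same color;
\item[(b)] both $a$ and $b$ are adjacent to $v$ in exactly the same set of colors, of size at least one.
\end{itemize}
This exception description is more delicate than necessary. A cleaner route is to lower bound the number of pairs directly. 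Order the colors so that $d_i\ge d_j\ge d_h$. The pairs $(a,b)$ with $a\in N_i$, $b\in N_j\setminus\{a\}$ give at least $d_i d_j-\min(d_i,d_j)$ ordered pairs, and each unordered edge of $\bg$ is counted at most twice among such ordered choices. Since $d_i+d_j+d_h\ge 3x$ and each $d_c\le n-1<2x$, we get $d_i\ge x$ and $d_j+d_h\ge 3x-d_i$. If we take $b\in N_j\cup N_h$, then $|N_j\cup N_h|\ge d_j\ge (3x-d_i)/2$. Pairs with $a\in N_i$ and $b\in N_j\cup N_h$, $b\ne a$, always give rainbow paths. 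Counting unordered pairs while handling the overlap $N_i\cap(N_j\cup N_h)$, I would aim for the bound
\[
\tfrac12\,d_i\,(3x-d_i)-O(n).
\]
The function $d(3x-d)$ is minimized over $d\in[x,n]$ at an endpoint. At $d=x$ it equals $2x^2$, and at $d=n\le 2x$ it is at least $nx\ge 2x^2\cdot\frac{n}{2x}$.

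The anticipated obstacle is precisely this overlap and double counting: a vertex lying in all three neighborhoods, and pairs counted twice. My plan is to use the cleaner bound
\[
e(\bg)\ \ge\ \sum_{\{a,b\}}\mathbf 1[\text{rainbow}] \ \ge\ \tfrac12\Bigl(\sum_{a} |\{b\ne a \text{ usable with } a\}|\Bigr).
\]
For a vertex $a$ with $m_a$ colors at $v$ among $\{i,j,h\}$, every $b$ with a color different from one of $a$'s colors is usable. So if $m_a\ge2$, then every other neighbor $b$ is usable. If $m_a=1$, then every $b$ having some other color is usable. Summing, and using $\sum_a m_a\ge 3x$ together with $|N|\le n\le 2x$, a convexity argument should give $\ge x^2$ ordered pairs, hence $\ge\frac12x^2$ edges. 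The convexity step is where I would spend effort. The extremal case is when each color class is disjoint of size $x$, giving $3x^2$ ordered pairs, so the bound has comfortable slack. The argument mainly needs that the largest monochromatic-only class is at most $n-x$-ish, which follows from $x\ge n/2$.

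For (ii), since $Spe(v)\subseteq\{i,j,h\}$, every edge at $v$ uses only colors $i,j,h$. Let $P$ be the set of neighbors $u$ with $m(v,u)=2$, and $S$ the set with $m(v,u)=1$; note $t(v)$ is not used. For $a,b\in P$ distinct, $a$ carries two colors and $b$ carries two colors, so we can pick distinct colors on $va$ and $vb$. Thus all $\binom{p(v)}2$ pairs lie in $\bg$. For $a\in P$ and $b\in S$, with $b$ carrying a single color $c$, the edge $va$ has a color other than $c$, giving a rainbow path $a\,v\,b$. That yields $p(v)s(v)$ further distinct pairs, which proves (ii).
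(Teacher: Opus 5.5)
\textbf{Verdict.} Your part (ii) is correct and is the paper's argument. Part (i) has a genuine gap, although it is easy to repair.

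\textbf{The main route for (i) aims at a false bound.} You target $\frac12 d_i(3x-d_i)-O(n)$. Take $G_i=G_j=G_h$ to be a star centred at $v$. Then $d_i=x=n-1$, so your target is $(n-1)^2-O(n)$, whereas $\bg$ is the clique on $V\setminus\{v\}$ with only $\binom{n-1}{2}$ edges. Two steps fail.
First, you only know $|N_j\cup N_h|\ge d_j\ge (3x-d_i)/2$, not $3x-d_i$.
Second, the overlap is not an $O(n)$ correction: every pair inside $N_i\cap(N_j\cup N_h)$ is counted twice, which is a quadratic loss.
Landing on $x^2$, twice the claim, should have been a warning sign. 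The claim is essentially tight when $N_i=N_j=N_h$ has size $x$ and there are no other edges.

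\textbf{The repair is the paper's proof.} Keep the factor $\frac12$ you had one sentence earlier. The ordered pairs $(a,b)$ with $a\in N_i$, $b\in N_j\setminus\{a\}$ number at least $d_id_j-d_j$, and each edge of $\bg$ is hit at most twice. Hence $e(\bg)\ge\frac12 d_j(d_i-1)\ge\frac14(3x-d_i)(d_i-1)$. The identity $d(3x-d)-2x^2=(d-x)(2x-d)\ge 0$ for $x\le d\le 2x$ then finishes the argument. Your endpoint check at $d=n$ only yields $nx$, which is at most $2x^2$, so it does not establish what you need.

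\textbf{The fallback plan is not carried out.} It stops exactly at the step that matters (``a convexity argument should give''). Its heuristics are also off: three disjoint classes of size $x$ are impossible since $3x>n$, and, as noted above, there is no slack.

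The plan can nevertheless be completed. Let $T$ be the neighbours of $v$ seen in at least two of the colours $i,j,h$, and $S_c$ those seen only in colour $c$. Put $m=|T|+\sum_c|S_c|$.
\begin{itemize}
\item The number of usable ordered pairs equals $m^2-|T|-\sum_c|S_c|^2\ge m^2-m-\sum_c|S_c|^2$.
\item From $3x\le d_i+d_j+d_h\le 3m-2\sum_c|S_c|$ you get $\sum_c|S_c|\le\frac32(m-x)$ and $m\ge x$; also $m\le n\le 2x$.
\item The function $m^2-\frac94(m-x)^2$ is concave and is at least $x^2$ at both $m=x$ and $m=2x$.
\end{itemize}
Together these give $e(\bg)\ge\frac12x^2-\frac n2$. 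This route avoids ordering the colours and is slightly cleaner than the paper's.

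\textbf{An $O(n)$ loss is unavoidable.} The star example above violates $e(\bg)\ge\frac12x^2$ as literally stated. The paper's count $\frac12|A||B|$ also silently includes the diagonal pairs $(u,u)$ with $u\in A\cap B$. This is harmless in the six-colour application, where $\frac12x^2$ exceeds the required $(\frac18+\eps)n^2$ by at least $\eps n^2$.
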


\begin{proof} Set $\bg=\bg(i,j,h)$.

(i) Set $A=N_i(v)$, $B=N_j(v)$, and $C=N_h(v)$. It is easy to see that $\bg$ contains all pairs $\{u,w\}$ such that
$(u,w) \in (A\times B) \cup (B\times C) \cup (A\times C)$.
W.l.o.g.~assume that $|A|\geq |B|\geq |C|$, set $a = |A|$ and note that $|B| \geq \frac{3x-a}{2}$.
The desired number of pairs $\{u,w\}$ is thus at least $\frac12 |A|\cdot|B| \geq \frac{a(3x-a)}{4}$, where the factor $\frac12$ reflects the fact that  each pair can be counted at most twice.
Since $x \leq a \leq n \leq 2x$, the last quantity is at least $\frac{1}{2}x^2$.

(ii) Let $P(v)=\{u: m(u,v)=2\}$ and $S(v)=\{u: m(u,v)=1\}$. It is easy to see that  every pair of vertices $\{u,w\}$ such that both $u$ and $w$ are in $P(v)$, or one is in $P(v)$ and the other in $S(v)$, forms a length two rainbow path in colors from $\{i,j,h\}$ and with the middle vertex $v$. Then each such pair is an edge in $\bg$ and the assertion follows.
\end{proof}

\subsection{Six colors}
We are to prove the following statement.

\begin{theorem}\label{thm:p5min_upper6}
For every $\eps>0$ there exists $n_0$ such that for all $n\ge n_0$ and every  collection of graphs $G_1, G_2, \ldots, G_6$ on a common set $V$ of $n$ vertices, each with  $e(G_i)\ge\left(\frac{1}{8}+\eps\right)n^2$, the multi-graph $M(G_1, G_2, \ldots, G_6)$ contains a rainbow $W_4$.
\end{theorem}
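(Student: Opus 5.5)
The plan is to argue by contradiction, with Claim~\ref{cla:base} as the engine. Suppose $M=M(G_1,\dots,G_6)$ has no rainbow $W_4$. Then \eqref{mle3} holds, and by Observation~\ref{obs:double_3colors} every vertex $v$ with $p(v)+t(v)\ge1$ has $spe(v)\le3$. By Claim~\ref{cla:base}, it suffices to find some triple of colours $i<j<h$ whose base graph $\bg(i,j,h)$ has at least $(\frac18+\eps')n^2$ edges. So the whole argument reduces to producing one large base graph.

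\textbf{Counting step.} Since $e(M)\ge 6(\frac18+\eps)n^2=(\frac34+6\eps)n^2$, the average total degree is $\sum_v d(v)/n\ge (\frac32+12\eps)n$. Split $V$ into two sets. Let $L$ be the set of vertices with $p(v)=t(v)=0$; such a vertex has $d(v)=s(v)\le n-1$. Every other vertex has $spe(v)\le 3$. Because vertices in $L$ contribute less than $n$ each, the vertices outside $L$ must carry the excess, and some of them have $d(v)\ge (\frac32+12\eps)n$.

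\textbf{Extracting the base graph.} Take a vertex $v\notin L$ with $d(v)\ge(\frac32+c\eps)n$. Since $Spe(v)\subseteq\{i,j,h\}$ for some triple, we have $d_i(v)+d_j(v)+d_h(v)=d(v)\ge 3x$ with $x=(\frac12+c'\eps)n\ge\frac n2$. Claim~\ref{Poznanski_lemacik}(i) then gives
\[
e(\bg(i,j,h))\ge \tfrac12x^2=(\tfrac18+c''\eps)n^2,
\]
and Claim~\ref{cla:base} yields a rainbow $W_4$, the desired contradiction.

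\textbf{Main obstacle.} The key step is showing that such a heavy vertex exists, because the average bound does not pass directly to a single vertex outside $L$ if $L$ is large. My approach is as follows. Each vertex in $L$ has degree below $n$, so
\[
\sum_{v\notin L} d(v)\ge (\tfrac34+6\eps)n^2\cdot 2-|L|n .
\]
I would then divide by $n-|L|$ and check that the resulting average stays at least $(\frac32+12\eps)n$. The danger is that when $|L|$ is close to $n$ the graph is nearly simple, which would contradict the total edge count $(\frac34+6\eps)n^2>\binom n2\cdot\frac32$. That inequality fails, however, so the cleaner route is to note directly that $\sum_v d(v)\ge(\frac32+12\eps)n^2$ while each $d(v)\le 3(n-1)$. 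Combining this with the fact that $L$-vertices give at most $n$ each, the maximum degree outside $L$ is at least the overall average, which is at least $(\frac32+12\eps)n$.

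If instead the heavy vertex has small $d(v)$ and relies on many double edges, I would fall back on Claim~\ref{Poznanski_lemacik}(ii) to bound $e(\bg)$ via $\binom{p(v)}2+p(v)s(v)$, using $d(v)=3t+2p+s$. One subtlety must be handled separately: if $spe(v)\le 2$, then $d(v)\le 2(n-1)$ and all edges at $v$ lie in two colours. In that case I would add a third colour arbitrarily to the triple, since Claim~\ref{Poznanski_lemacik}(i) only requires the sum over three colours.
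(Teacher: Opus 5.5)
Your proof is correct and is essentially the paper's argument: averaging gives a vertex $v$ with $d(v)\ge(\frac32+12\eps)n$, which forces a multiple edge at $v$ and hence $spe(v)\le3$ by Observation~\ref{obs:double_3colors}, and Claim~\ref{Poznanski_lemacik}(i) with $x=(\frac12+4\eps)n$ followed by Claim~\ref{cla:base} finishes. Your ``main obstacle'' is not a real obstacle, because any vertex of degree above $n-1$ automatically lies outside $L$. Your aside that $(\frac34+6\eps)n^2>\frac32\binom n2$ fails is mistaken (it holds), and the fallback via part (ii) is never needed.
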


\begin{proof}
Suppose by contradiction that $G_1, G_2, \ldots, G_6$ is a collection of graphs on $V$, each with at least $\left(\frac{1}{8}+\eps\right)n^2$ edges, not containing a rainbow $W_4$.
Since $\sum_i e(G_i) \geq \left(\frac{3}{4}+6\eps\right)n^2$, there exists a vertex $v$ with $d(v) \geq \left(\frac{3}{2}+12\eps\right)n$. It follows that for some vertex $u$ we have \hbox{$m(v,u)\ge2$}, say $vu\in E(G_1)\cap E(G_2)$. Since there is no rainbow $W_4$, by Observation~\ref{obs:double_3colors}, vertex $v$ may have neighbors in at most one other graph. W.l.o.g.~we may assume that $Spe(v)\subseteq\{1,2,3\}$.
Consider the base graph $\bg=\bg(1,2,3)$.
Applying Claim~\ref{Poznanski_lemacik}\,(i) with $x=\left(\frac{1}{2}+4\eps\right)n$, we infer that $e(\bg)\ge\frac12x^2\ge\left(\frac{1}{8}+\eps\right)n^2$ edges. Thus, by Claim~\ref{cla:base}, $M$~contains a~rainbow walk $W_4$, a contradiction.
\end{proof}

\subsection{Five colors}\label{5colors}

We are to prove the following statement.

\begin{theorem}\label{thm:p5min_upper5}
For every $\eps>0$ there exists $n_0$ such that for all $n\ge n_0$ and every  collection of graphs $G_1, G_2, \ldots, G_5$ on a common set $V$ of $n$ vertices, each with  $e(G_i)\ge\left(\frac{1}{8}+\eps\right)n^2$, the multi-graph $M(G_1, G_2, \ldots, G_5)$ contains a rainbow $W_4$.
\end{theorem}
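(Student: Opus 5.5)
We argue by contradiction, in the spirit of the proof of Theorem~\ref{thm:p5min_upper6}. Suppose $G_1,\dots,G_5$ each have at least $\left(\frac18+\eps\right)n^2$ edges and $M=M(G_1,\dots,G_5)$ has no rainbow $W_4$; by~\eqref{mle3} all multiplicities are at most $3$. We cannot simply copy the six-colour argument. The average degree is now only about $\left(\frac54+10\eps\right)n$, which is below the $\frac32n$ needed to apply Claim~\ref{Poznanski_lemacik}\,(i) at a single vertex. So the plan is to combine a sharper local bound on the degree of every vertex with a global double counting over the colours and over the possible triples.

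The local step comes first. Call a vertex \emph{simple} if $p(v)+t(v)=0$; then $d(v)\le n-1$. Every other vertex has $spe(v)\le3$ by Observation~\ref{obs:double_3colors}, so we may fix a triple $T(v)\supseteq Spe(v)$ of colours. For such $v$, with $T(v)=\{i,j,h\}$, extend Claim~\ref{Poznanski_lemacik}\,(ii) to include triple edges. A neighbour joined to $v$ by a triple edge forms a rainbow path of length two through $v$ with every other neighbour. Writing $a=t(v)+p(v)$ and $s=s(v)$, this gives
\[
e(\bg(i,j,h))\ge \binom a2+as .
\]
By Claim~\ref{cla:base}, this quantity must be at most $\left(\frac18+o(1)\right)n^2$. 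Together with $a+s\le n$ and $d(v)\le 3a+s$, this bounds the degree. Optimizing gives $d(v)\lesssim \frac52a+\frac{n^2}{8a}$ subject to $an-\frac{a^2}2\le\frac{n^2}8$, which yields roughly $d(v)\le(1.27+o(1))n$. This is slightly \emph{above} $\frac54n$, so the per-vertex bound alone does not finish the proof, and a global argument is needed.

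For the global step, group the non-simple vertices according to the triple $T(v)$; there are $\binom53=10$ triples. All vertices $v$ with the same triple $\{i,j,h\}$ contribute edges to the \emph{same} base graph $\bg(i,j,h)$, and by Claim~\ref{cla:base} its total size is at most about $\frac18n^2$. Moreover, the rainbow paths counted at distinct middle vertices give distinct pairs only up to bounded overlap, so I would bound the union from below by a sum over the class minus a correction. The colour $\ell\notin\{i,j,h\}$ receives nothing from that class. Next, double count $\sum_\ell e(G_\ell)\ge 5\left(\frac18+\eps\right)n^2$. Each colour $\ell$ lies in only $6$ of the $10$ triples, so the degree of every non-simple vertex is distributed over just three colours. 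Averaging over colours, I expect to show that some colour $\ell$ can have $\left(\frac18+\eps\right)n^2$ edges only if some class of vertices with $\ell\in T(v)$ has a large total degree. Feeding this into the extended Claim~\ref{Poznanski_lemacik}\,(ii), for instance by choosing a vertex of that class whose triple edges and double edges fall into that base graph together with other vertices of the class, should push $e(\bg)$ above $\left(\frac18+\eps'\right)n^2$, and Claim~\ref{cla:base} then gives a contradiction.

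The main obstacle is this last combination step: turning a class-level degree surplus into a base-graph surplus. The first thing I would try is an explicit case analysis on $\max_v a(v)$. If some vertex has $a(v)\ge\left(\frac12-\delta\right)n$, then $\binom a2$ alone nearly fills $\frac18n^2$. In that case I would use one more vertex $w$ with the same triple, or a neighbour of $v$ through a double edge, to add the missing $\Omega(n^2)$ pairs. Otherwise every vertex has degree at most $(\frac54-c)n$ by the optimization above with a tightened constraint, and this contradicts the average degree $\left(\frac54+10\eps\right)n$. Making the threshold $\delta$ consistent across both cases is where I expect the real difficulty to lie.
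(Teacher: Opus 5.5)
Your plan has a genuine gap, and it starts in the local step. The extended cherry count $e(\bg(i,j,h))\ge\binom a2+as$ is correct. However, the constraint $an-\frac{a^2}{2}\le\frac{n^2}{8}$ is what you get by forcing $s=n-a$, and nothing forces that.

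Your own bound $\frac52a+\frac{n^2}{8a}$ is convex and equals $\frac32n$ at $a=\frac n2$. With $t(v)=a\approx\frac n2$ and $s=0$, every constraint you use holds and $d(v)=3t(v)\approx\frac32n$. This is exactly what happens at each vertex of the near-extremal configuration: cliques in colours $1,2,3$ on one half of $V$, cliques in colours $4,5$ on the other half. There every base graph has about $\frac18n^2$ edges.

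So base-graph information at a single vertex gives only the six-colour bound. The ``otherwise'' branch of your case analysis is also false:
\begin{itemize}
\item $t(v)=a\approx(1-\frac{\sqrt3}{2})n$ and $s=n-a$ give $e(\bg)\approx\frac18n^2$, yet $d(v)\approx(3-\sqrt3)n>\frac54n$;
\item $t(v)=a$ just below $(\frac12-\delta)n$ with $s=0$ gives $d(v)\approx(\frac32-3\delta)n$.
\end{itemize}

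The global step rests on the claim that cherries at distinct middle vertices give distinct pairs up to bounded overlap. This fails exactly where it matters. In a three-coloured clique $K$, every pair of $K$ is a rainbow cherry through each of the other $|K|-2$ vertices. So a whole class of high-degree vertices with the same triple may contribute nothing to $\bg$ beyond $\binom{|K|}{2}$. For the same reason, your treatment of the case $a(v)\ge(\frac12-\delta)n$, which looks for extra pairs through another vertex with the same triple, has no mechanism behind it.

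What is missing is information about where the colours outside $Spe(v)$ live, together with a way to avoid bounding the total degree. The paper's route is as follows.
\begin{enumerate}
\item Each $G_i$ spans at least $(\frac12+\eps)n$ vertices. When $t(v)\ge1$, the two colours outside $Spe(v)$ must avoid $N(v)\cup\{v\}$. Hence $|N(v)|<\frac n2$ and $d(v)<n+t(v)$. A separate argument gives $d(v)<n$ when $t(v)=0$.
\item The surplus $t(v)$ is real, so the paper does not fight it. It takes $M$ edge-maximal and shows that triple edges then form three-coloured cliques.
\item It lets colour $1$ be a colour shared by the two largest such cliques. It then proves $d_2(v)+d_3(v)+d_4(v)+d_5(v)\le n$ for every $v$. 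When $1\in Spe(v)$, this follows by subtracting $d_1(v)\ge t(v)$. Otherwise it uses that $N(v)$, $K'$ and $\overline{V}(G_2)$ are pairwise disjoint, or a compression of $M(G_1,G_2)$.
\item Summing gives $\sum_{i=2}^5e(G_i)\le\frac{n^2}{2}<4\left(\frac18+\eps\right)n^2$.
\end{enumerate}

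Discarding one well-chosen colour is what replaces your hoped-for averaging over triples. Every inequality you actually use is tight up to $O(\eps n^2)$ in the near-extremal configuration. So without such an idea, or a genuine stability argument exploiting the $\eps$ slack, they cannot produce the contradiction.
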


\begin{proof}

Suppose to the contrary that for some $\eps>0$ and arbitrarily large $n$ there is a collection of graphs $G_1, G_2, \ldots, G_5$ on $V$, each with at least $\left(\frac{1}{8}+\eps\right)n^2$ edges, and such that the multi-graph $M:=M(G_1, G_2, \ldots, G_6)$ contains no rainbow $W_4$. Assume further that $M$ is maximal with these properties, that is, adding to $M$ a new edge (of any color) creates a rainbow $W_4$. Clearly,
for each $i\in\{1,2,3,4,5\}$
$$|\overline{V}(G_i)|\ge \left(\frac12+\eps\right)n.$$

\begin{observation}\label{obs:size_of_N(v)}
Let $v \in V$ and $P(v) := \{u\in V: m(v,u)=2 \}$.
\begin{itemize}
\item[(a)] If $t(v) \geq 1$ then for every color $i\not\in Spe(v)$, it holds that $\overline{V}(G_i)\cap (N(v)\cup\{v\})=\emptyset$. In particular $|N(v)| < \left(\frac12-\eps\right)n$.
    \item[(b)] If $p(v)\geq 1$ then there exists a color $i\not\in Spe(v)$, such that $\overline{V}(G_i)\cap P(v)=\emptyset$. In particular $p(v) < \left(\frac12-\eps\right)n$.
\end{itemize}

\begin{proof}For the first part, note that if $\overline{V}(G_i)\cap (N(v)\cup\{v\})\not=\emptyset$, then there exists a rainbow $W_4$. Since $e(G_i) \ge \left(\frac{1}{8}+\eps\right)n^2$, we have $\overline{V}(G_i) \ge \left(\frac12+\eps\right)n$. Thus, $\overline{V}(G_i)\cap (N(v)\cup\{v\})=\emptyset$ implies that $|N(v)| < \left(\frac12-\eps\right)n$.

For the second statement, by Observation~\ref{obs:double_3colors}, $spe(v) \le 3$, say $Spe(v) = \{1,2,3\}$. If $\overline{V}(G_4)\cap P(v) \not=\emptyset$ and $\overline{V}(G_5)\cap P(v) \not=\emptyset$, then there exists a rainbow $W_4$. The inequality $p(v) < \left(\frac12-\eps\right)n$ follows by the same argument as in part (a). 
\end{proof}

\end{observation}

\begin{claim}\label{cla:n+t}
    For every vertex $v \in V$ with $t(v)\geq 1$ we have $d(v) < n+t(v)$.
\end{claim}

\begin{proof}
    Assume to the contrary that there exists a vertex $v\in V$ such that $t(v)\ge1$ and  $d(v) \geq n+t(v)$.
    Then, by Observation~\ref{obs:size_of_N(v)}\,(a), we have $|N(v)|<\frac n2$, and, by~\eqref{tps}, it holds that  $2(t(v)+p(v)+s(v))=2|N(v)|<n$. On the other hand, $2(t(v)+p(v)+s(v))>d(v)-t(v)\geq n$, a~contradiction.
\end{proof}

\begin{claim}\label{cla:cliques_in_triple_edges}
    For any triple of vertices $u,v,w$, if $m(v,u)=m(v,w)=3$, then $m(u,w)=3$. 
\end{claim}

\begin{proof}
    By Observation~\ref{obs:double_3colors}, $Spe(u)=Spe(v)=Spe(w)=\{i,j,h\}$. Suppose that one of the three colors, say $i$, satisfies $e = uw \not\in E(G_i)$. Let $M+e$ denote the colored multi-graph obtained by adding $e$ to $G_i$.  By maximality of $M$, this means that after adding the edge $e$ to $G_i$, a rainbow $W_4$ would be created. If $e$ is one of the end-edges of that rainbow walk, then we can replace it with one of the edges $uv$ or $wv$ in the graph $G_i$, thus getting a rainbow $W_4$ in $M$, a contradiction.

    So, suppose $e$ is one of the inner edges of a rainbow $W_4$ in $M+e$. By symmetry, we can assume that $e$ is the second edge of a rainbow walk $xuwyz$. Clearly, this walk has an edge in color $\ell\notin\{i,j,h\}$, and that edge has to be $yz$. 
    But then one gets a rainbow $W_4$ consisting of  $yz$, $yw$, and two edges between $w$ and $v$ of colors different than the color of  $wy$, a contradiction again.
\end{proof}

\begin{claim}\label{cla:clique_1/4}
    There exists a vertex $v$ such that  $t(v)\ge\frac{n}{4}$. It follows that there are three colors $i,j,h$, and a set $K\subset V$ with $m:=|K|\ge\frac{n}{4}+1$, such that $G_i[K]=G_j[K]=G_j[K]\cong K_m$.
\end{claim}

\begin{proof}
    Since \[ \sum_{v \in V} d(v) = 2 e(M) > \frac{5}{4}n^2,\] there exists a vertex $v$ with $d(v) > \frac{5}{4}n$.

    If $t(v)=0$, then $d(v)=2p(v)+s(v)>\frac{5}{4}n$, and so $p(v)\ge \frac n8$. By Observation~\ref{obs:double_3colors}, $spe(v)\le3$, and by Observation~\ref{obs:size_of_N(v)}\,(b), $p(v)\le \left(\frac12-\eps\right)n$. Then, by Claim~\ref{Poznanski_lemacik}\,(ii), setting $p=p(v)$ and $s=s(v)$, for some colors $i,j,h$,
    \[ e(\bg(i,j,h)) \ge \frac{p(p-1)}{2} + ps > \frac{p(p-1)}{2} + \left(\frac{5}{4}n-2p\right)p=p\left(\frac54n-\frac32p -\frac12\right). \]
    
    Since $\frac{n}{8}\le p\le\frac{n}{2}$, we have $e(\bg(i,j,h))\ge\min\left\{\frac{17n^2}{8\cdot16}-\frac{n}{16}, \frac{n^2}4-\frac{n}4\right\}>\left(\frac18+\eps\right)n^2$ for sufficiently large $n$, and, by Claim~\ref{cla:base}, there is a rainbow $W_4$, a contradiction.
    
    If $t(v)>0$, then, by Claim~\ref{cla:n+t}, $\frac{5}{4}n<n+t(v)$, which implies $t(v)>\frac{n}{4}$. By Observation~\ref{obs:double_3colors}, $spe(v)=3$. By Claim~\ref{cla:cliques_in_triple_edges}, for every pair of vertices $u,w$ such that $m(v,u)=m(v,w)=3$, we also have $m(u,w)=3$. Thus, the conclusion follows with $K=\{u: m(v,u)=3\}\cup\{v\}$ and $\{i,j,h\}=Spe(v)$.
 \end{proof}

The next result extends Claim~\ref{cla:n+t} to all $v\in V$.
\begin{claim}\label{t0d<n }
    For every vertex $v \in V$ with $t(v)=0$ we have $d(v) < n$. 
\end{claim}

\begin{proof}
    Assume there exists a vertex $v \in V(G)$ such that $t(v)=0$ and $d(v) \geq n$.
    Then $p(v)\ge1$, hence by Observation~\ref{obs:double_3colors}, $spe(v)\le3$ and, by Observation~\ref{obs:size_of_N(v)}\,(b),
    \[p(v)\leq\left(\frac12-\eps\right)n.\]
    By Claim~\ref{cla:clique_1/4}, there exists a clique $K$ in three colors, say $1, 2, 3$, with $|K|>\frac{n}{4}$. We will now show that in some three colors we can find a large base graph $\bg(i,j,h)$, getting a contradiction due to Claim~\ref{cla:base}.	
	
    Assume first that $d_4(v)>0$ or $d_5(v)>0$. Then $N(v)\cap K=\emptyset$, as otherwise there would be a rainbow $W_4$.
    It follows that $s(v)+p(v)=|N(v)|<\frac{3}{4}n$ and $n \leq s(v)+2p(v) < \frac{3}{4}n+p(v)$, so $p(v)>\frac14n$.
    Let $Spe(v)\subseteq\{i,j,h\}$. Then, by Claim~\ref{Poznanski_lemacik}\,(ii), setting $p=p(v)$ and $s=s(v)$,
	\[e(\bg(i,j,h)) \geq \frac{p(p-1)}{2}+s \cdot p \geq \frac{p(p-1)}{2} + p(n-2p) = \frac{p}{2}(2n-3p-1). \]
	Since $\frac14n < p \leq \left(\frac{1}{2}-\eps\right)n$, we derive $e(\bg(i,j,h)) > \left(\frac{1}{8}+\frac\eps4\right)n^2$ for sufficiently large $n$, and get a contradiction by Claim~\ref{cla:base}.

    Next, assume that $Spe(v)\subseteq\{1,2,3\}$. We have $\sum_{w \in V} \left(d_4(w)+d_5(w)\right) \geq \frac{n^2}{2}$ and,
	since $\overline{V}(G_4\cup G_5)\cap K=\emptyset$, also $|\overline{V}(G_4\cup G_5)|\le\frac{3}{4}n$. Thus, by averaging, 	
	there exists a vertex $w$ such that $d_4(w)+d_5(w) \geq \frac{n^2/2}{(3/4)n}=\frac23n$.

    Now if both $d_4(w)>0$ and $d_5(w)>0$, then the number of vertices which are endpoints of a rainbow path $P_3$ in colors $4$ and $5$ is at least $\max\left\{d_4(w),d_5(w)\right\}\ge\frac12 (d_4(w)+d_5(w)) \geq \frac{n}{3}$. To avoid a rainbow $W_4$, these vertices cannot be adjacent to $v$, so $s(v)+p(v)\leq n-\frac13n = \frac23n$.
    Hence, $n \leq s(v)+2p(v) < \frac{2}{3}n+p(v)$, and consequently, $p(v)>\frac13n>\frac14 n$. Therefore, again 
    we have $e(\bg(1,2,3)) > \left(\frac{1}{8}+\frac\eps4\right)n^2$, giving a contradiction.
	
    We are left with the case $d_5(w)=0$ (or, symmetrically, $d_4(w)=0$). 
    Let us split the set of vertices $V=K\cup N_4(w)\cup R$, where the three sets are disjoint, and recall that $|K| \geq n/4$ and $|N_4(w)| \geq 2n/3$.
    We will focus on $G_5$. We have $\overline{V}(G_5)\cap K=\emptyset$ and there are at most 
    $$\binom{3n/4}2-\binom{2n/3}2<\left(\frac5{16}-\frac 29\right)n^2=\frac{13}{144}n^2$$
    pairs of vertices $\{u_1,u_2\}$ with $\{u_1,u_2\}\cap K=\emptyset$ and  $\{u_1,u_2\}\cap R\neq\emptyset$.
    Hence, at least $\frac18n^2 - \frac{13}{144}n^2 = \frac{5}{144}n^2$ edges of $G_5$ have both endpoints in $N_4(w)$. This implies that (much) more than $\frac14n$ of vertices in $N_4(v)$ are incident to an edge of $G_5$. These vertices cannot be adjacent to $v$, so 
    
    $$|N(v)|=s(v)+p(v) < n-\frac14n = \frac34n\quad\mbox{and}\quad n \leq s(v)+2p(v) < \frac{3}{4}n+p(v),$$ yielding $p(v)>\frac14n$.
    
    Therefore, one more time, $e(\bg(1,2,3)) >\left(\frac{1}{8}+\frac\eps4\right)n^2$, leading to a final contradiction.
\end{proof}

In the remainder of the proof a pivotal role will be played by three-colored cliques. A set of vertices $K\subseteq V(M)$ is called a \emph{three-colored clique} (in colors $i,j,h$) if for all $u,v\in K$ we have $uv\in E(G_i)\cap E(G_j)\cap E(G_h)$ and no superset of $K$ has this property.

By Claim~\ref{cla:clique_1/4}, we know that there is a three-colored clique of size at least $n/4$. 
Let $K'$ and $K''$ be the two largest three-colored cliques, and w.l.o.g. let $1$ be a color that occurs in both of them. If there is only one such clique, then let $1$ be any of its colors.

We now prove the following claim which will provide a bound on the total number of edges in colors different from $1$.

\begin{claim}\label{cla:5col2345}
    For any vertex $v \in V$ we have $d_2(v)+d_3(v)+d_4(v)+d_5(v) \leq n$.
\end{claim}	

\begin{proof}
    We may assume that $t(v)>0$, since otherwise the conclusion follows by Claim~\ref{t0d<n }. In turn, by Claim~\ref{cla:cliques_in_triple_edges}, $v$ belongs to a three-colored clique $K$ of size at least $t(v)+1$.
    If $1\in Spe(v)$, then $d_1(v)\ge t(v)$ and, by Claim~\ref{cla:n+t}, $d(v)-d_1(v)<n+t(v)-t(v)=n$, so the assertion follows. Hence, assume that  color 1  does not appear on $K$; say, $Spe(v)=\{3,4,5\}$. In particular, $d_1(v)=0$. Moreover, by Observation~\ref{obs:size_of_N(v)}\,(a), 
    \begin{equation}\label{eq:N(v)_disjoint}
        (N(v)\cup\{v\})\cap \overline{V}(G_i)=\emptyset,\;\; i=1,2.
    \end{equation}
    
    Let us consider two cases depending on whether  $K'$ and $K''$, in addition to 1, share color 2 or not. In either case, $\min\{|K'|,|K''|\}\ge|K|\ge t(v)+1$, since $K$ is different from $K'$ and $K''$. It follows that $t(v)<n/3$.

    \smallskip
    {\bf Case 1:} $K'$ and $K''$ do not share color 2. 
    Let color 2 be not a color of, say,  $K'$. Then, by Observation~\ref{obs:double_3colors}, $\overline{V}(G_2)\cap K'=\emptyset$ and, by \eqref{eq:N(v)_disjoint}, $N(v)\cap \overline{V}(G_2)=\emptyset$. Moreover, since $K'\subseteq \overline{V}(G_1)$, again by \eqref{eq:N(v)_disjoint}, $N(v)\cap K'=\emptyset$. Thus, all three sets, $N(v),K'$, and $\overline{V}(G_2)$ are pairwise disjoint, and, using the bounds $|\overline{V}(G_2)|\ge\frac n2$ and $|K'|\ge t(v)$, we get

    $$|N(v)|\le|V(M)|-|\overline{V}(G_2)|-|K'|\le n-\frac{n}{2}-t(v)=\frac n2-t(v),$$ and by~\eqref{tps}, dropping $(v)$ from notation,
    $$d= 3t+2p+s\le t+2(t+p+s) \leq t + 2\left(\frac{n}{2}-t\right)  = n-t<n,$$
    as needed.

    \smallskip
    {\bf Case 2:}  $K'$ and $K''$ share  colors $1$ and $2$, both missing from $Spe(v)$.  Assume that $|K'|\ge |K''|$ and set $G_{12}=M(G_1, G_2)$.
    
    Consider a vertex $u \in V(M)$. If for some $w\in K'$ and $i\in\{1,2\}$, $uw\in E(G_i)$, then $uw'\not\in E(G_{3-i})$ for all $w'\in K''$, since otherwise there would be a rainbow $W_4$. 
    By symmetry, the same is true with $K'$ and $K''$ swapped. Thus, every vertex of the set 
    $$S:=\overline{V}(G_{12})\setminus(K'\cup K'')$$
     has in $G_{12}$ at most $2|K'|$ edges going to $K'\cup K''$, 
     and there are no edges between $K'$ and $K''$ in $G_{12}$. 

    We construct an auxiliary colored multi-graph $G'_{12}$ obtained from $G_{12}$ by removing all existing edges between $S$ and $K''$ and adding all edges between $S$ and $K'$ in colors $1$ and $2$. Observe that $\overline{V}(G_{12})=\overline{V}(G'_{12})$, $e(G'_{12})\ge e(G_{12})$ and $K''$ is a connected component of $G'_{12}$. 
    
By Observation~\ref{obs:size_of_N(v)}\,(a), $N(v)\cap \overline{V}(G'_{12})=\emptyset$, so if $|\overline{V}(G'_{12})| \ge \frac{n+t(v)}{2}$, then
\[d(v)\le 3t(v)+2(n-t(v)-|\overline{V}(G'_{12})|) \le n,\]
as needed. Thus, assume that $|\overline{V}(G'_{12})| < \frac{n+t(v)}{2}$. Setting $t=t(v)$ and $a=|K''|$, we have $a < \frac{n+t}{4}$, and 
\[e(G'_{12}) \le 2\binom{\frac{n+t}{2}-a}{2} + 2\binom{a}{2} < \left(\frac{n+t}{2}-a\right)^2 + a^2.\]
The right-hand side is a decreasing function of $a$ for $a \le \frac{n+t}{4}$, and $a>t$, so
\[e(G'_{12}) < \frac{(n-t)^2}{4} + t^2 \le \max\left\{\frac{n^2}{4}, \frac{2n^2}{9}\right\}\]
since $t \in [0, \frac{n}{3}]$. 
This contradicts the assumption $e(G'_{12}) \ge 2\left(\frac{1}{8}+\eps\right)n^2$. 
\end{proof}

To finalize the proof of Theorem~\ref{thm:p5min_upper5}, note that by Claim~\ref{cla:5col2345},
\[\sum_{i=2}^5 e(G_i) = \frac{1}{2}\sum_{v\in V}\sum_{i=2}^5 d_i(v) \le \frac{1}{2}n^2.\]
On the other hand, $\sum_{i=2}^5 e(G_i) > 4\cdot \frac{1}{8}n^2$, which gives a contradiction.
\end{proof}

\subsection{Nine colors}

We are to prove the following statement.

\begin{theorem}\label{thm:p5min_upper9}
For every $\eps>0$ there exists $n_0$ such that for all $n\ge n_0$ and every  collection of graphs $G_1, G_2, \ldots, G_9$ on a common set $V$ of $n$ vertices, each with  $e(G_i)\geq\left(\frac{1}{18}+\eps\right)n^2$, the multi-graph $M(G_1, G_2, \ldots, G_9)$ contains a rainbow $W_4$.
\end{theorem}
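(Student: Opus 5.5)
We argue by contradiction and reduce to the five-colour case, Theorem~\ref{thm:p5min_upper5}; this is why the paper treats $k=5$ before $k=9$. Suppose $G_1,\dots,G_9$ each have at least $\left(\frac1{18}+\eps\right)n^2$ edges and $M=M(G_1,\dots,G_9)$ contains no rainbow $W_4$. As before, every multiplicity is at most $3$ by~\eqref{mle3}. Observation~\ref{obs:double_3colors} says that a vertex incident to a multiple edge has spectrum of size at most $3$.

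\textbf{Step 1: find one highly multiple vertex.} Since $2e(M)\ge 9\left(\frac19+2\eps\right)n^2=(1+18\eps)n^2$, some vertex $v$ has $d(v)\ge(1+18\eps)n>n$. Such a $v$ has a multiple edge, so $spe(v)\le3$, say $Spe(v)\subseteq\{1,2,3\}$. Moreover $d_1(v)+d_2(v)+d_3(v)>n$.

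\textbf{Step 2: localize the rest of the colours.} Every colour $i\ge4$ must avoid creating a rainbow walk through $v$. If $u\in N(v)$ and $m(u,v)\ge2$, then $u$ has no edge of a colour outside $Spe(v)$, except possibly one extra colour, by the argument of Observation~\ref{obs:size_of_N(v)}. If $m(u,v)=3$, no outside colour touches $N(v)\cup\{v\}$. Since $d(v)>n$, the set $D=\{u:m(u,v)\ge2\}$ has size at least $d(v)-n$. Together with $v$'s neighbours, this region is mostly closed to colours $4,\dots,9$.

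The aim is to show that the six colours $4,\dots,9$ live essentially on a set $U$ of size about $\frac23 n$. On $U$ they must satisfy $e(G_i)\ge\left(\frac1{18}+\eps\right)n^2=\left(\frac18+\eps''\right)|U|^2$ when $|U|\le\frac23n$. Theorem~\ref{thm:p5min_upper5}, or the simpler Theorem~\ref{thm:p5min_upper6}, applied to these six colours on $U$, then gives a rainbow $W_4$. So the key intermediate target is the following: there is a set $X$ with $|X|\ge n/3$ on which at most three colours appear. To extract it, iterate over vertices of large degree, mimicking Claim~\ref{cla:clique_1/4}.

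\textbf{Step 3: produce the three-colour block.} If $t(v)>0$, I would prove the analogue of Claim~\ref{cla:cliques_in_triple_edges} using maximality of $M$. This gives a three-coloured clique $K\ni v$. I would also prove the analogue of Claim~\ref{cla:n+t}, namely $d(v)<|N(v)|+\dots$. The point of this analogue is that a large degree forces $t(v)\gtrsim d(v)-n$.

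If instead $t(v)=0$, then $p(v)$ is large. In that case Claim~\ref{Poznanski_lemacik}(ii) gives a large base graph $\bg(1,2,3)$. Claim~\ref{cla:base} only applies at density $\frac18$, so here I would rerun its proof with Theorem~\ref{thm:p4min} for $k=7$. That theorem gives $\pi_7^{\min}(P_4)=\frac1{13}$, so a base graph with more than $\left(\frac1{26}+\eps\right)n^2$ edges, together with $G_4,\dots,G_9$ of density above $\frac1{26}$, yields a rainbow $W_3$. This holds since $\frac1{18}>\frac1{26}$. Thus the base graph must be small, which forces $p(v)$ small, and hence $t(v)$ large.

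Once we have a clique $K$ with $|K|>n/3$ in colours $\{1,2,3\}$, no other colour touches $K$. Applying the six-colour result on $V\setminus K$, whose size is below $\frac23n$, gives the contradiction.

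\textbf{Main obstacle.} The hard part is guaranteeing $|K|\ge n/3$, and more precisely making the averaging work. The max-degree vertex only gives $t(v)\gtrsim d(v)-n\approx 0$. So I would instead consider all colours simultaneously. Vertices with $d(v)\ge n$ are organized into three-coloured cliques, and vertices with $d(v)<n$ contribute little. I would then attempt a weighted count, as in Claim~\ref{cla:5col2345}, to show $\sum_v d(v)\le n^2$ unless some clique exceeds $n/3$. If the bound on $X$ falls short of $n/3$, I would fall back to the finer disjointness arguments of Case~2 in Claim~\ref{cla:5col2345}.
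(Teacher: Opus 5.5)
You have a genuine gap. Your final reduction is sound: if some set of at least $n/3$ vertices meets at most three (or even four) colours, the remaining colours live on at most $\frac23 n$ vertices at relative density above $\frac18$, and Theorem~\ref{thm:p5min_upper5} finishes. But the step that should produce such a set is missing, and the routes you sketch for it do not work.

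Averaging gives only a vertex with $d(v)\ge(1+18\eps)n$, i.e.\ essentially no surplus over $n$. In the five-colour proof, the surplus $\frac54n-n$ is exactly what produced $t(v)>\frac n4$; here nothing comparable is available.

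The case $t(v)\ge1$ is harmless: all colours $\ge4$ must then avoid $N(v)\cup\{v\}$, which has at least $n/3$ vertices, so the six-colour reduction applies. The real case is $t(v)=0$, and there your argument breaks down in three places.
\begin{itemize}
\item From $2p+s=d(v)$ and $p+s\le n-1$ you get only $p(v)\ge d(v)-n+1$, so $p(v)$ need not be large. A vertex with $t=0$, $p=o(n)$, $s\approx n$ is perfectly consistent.
\item Concluding ``$p$ small, hence $t$ large'' contradicts the case assumption $t(v)=0$.
\item The modified base-graph step fails. Theorem~\ref{thm:p4min} with $k=7$ yields a rainbow $W_3$ in $M(\bg,G_4,\dots,G_9)$, but that walk need not use $\bg$. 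It may lie entirely in colours $4,\dots,9$, where rainbow $W_3$'s are not forbidden (e.g.\ inside one part of Construction~II), so it does not lift to a rainbow $W_4$. Claim~\ref{cla:base} works only because exactly three graphs are used, which forces $\bg$ into the walk, and that costs density $\frac18$.
\end{itemize}
Your closing plan, a weighted count as in Claim~\ref{cla:5col2345}, is not yet an argument.

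The missing idea is to use the five-colour theorem not as the final blow but as a lemma: every set $S$ with $|S|\ge n/3$ meets at least five colours. Apply it to $N(v)$ for a vertex with $d(v)\ge n$; here $|N(v)|\ge n/3$ since multiplicities are at most $3$. This produces two distinct colours $i,i'\ge4$ at neighbours of $v$, which has two consequences.
\begin{itemize}
\item It forces $t(v)=0$, so no three-coloured clique through $v$ exists and your Step~3 points the wrong way.
\item It forces every neighbour of $v$ touching a colour $\ge4$ to be joined to $v$ by a single edge of one fixed colour.
\end{itemize}
The paper then partitions the vertices into $X\supseteq P$, $A_i$, $R$, $B_i$, $Z$, which gives $e(G_i)\le\frac12(a_i+b_i+z)^2+b_ir$. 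Applying the lemma to the four-coloured set $A_i\cup B_i\cup P$ gives $a_i+b_i+p\le n/3$, and one also has $\sum_i b_i+z\le n-|N(v)|\le p$. For the colour with the smallest $b_i$, a convexity computation yields $e(G_i)\le n^2/18$, the desired contradiction. No large three-coloured set is ever found; the contradiction comes from directly counting the edges of one colour.
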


\begin{proof}
Suppose to the contrary that for some $\eps>0$ and arbitrarily large $n$ there is a collection of graphs $G_1, G_2, \ldots, G_9$ on $V$, each with at least $\left(\frac{1}{18}+\eps\right)n^2$ edges, and such that the multi-graph $M:=M(G_1, G_2, \ldots, G_9)$ contains no rainbow $W_4$.

\begin{claim}\label{cla:9col_n/3}
    Every set $S\subset V$ of size $|S|\ge\frac{1}{3}n$  is incident to edges in at least five different graphs, that is, $\big|\bigcup_{v\in S}Spe(v)\big|\ge5$.
\end{claim}

\begin{proof} For mere convenience, assume that $n$ is divisible by 3 and suppose that a set $S$ of size $|S|=\frac13n$ is incident to edges in at most four graphs, say $G_6,G_7,G_8,G_9$.
Then $V\setminus S$ has size $\frac{2}{3}n$  and contains all edges of $G_1,\dots,G_5$. Recall that for each $i$,
$$e(G_i)\ge\left(\frac{1}{18}+\eps\right) n^2= \left(\frac{1}{8} + \eps'\right)\left(n'\right)^2,$$ where $n'=\frac{2}{3}n$ and $\eps'=\frac94\eps$. Thus, by Theorem~\ref{thm:p5min_upper5}, the multi-graph $M(G_1,\ldots,G_5)[V\setminus S]$ contains a rainbow walk on four edges, a contradiction.
\end{proof}

Since $e(M) = \sum_{i=1}^9 e(G_i) > \frac{1}{2}n^2$, there exists a vertex $v$ with $d(v)\geq n$. There are three immediate consequences of this fact (and~\eqref{tps}). Recall that every edge in $M$ has multiplicity at most $3$ (cf. \eqref{mle3}).
 \begin{observation}\label{3consec} If $d(v)\geq n$, then
  \begin{itemize}
  \item[(i)] $p(v)+t(v)\ge1$,
  \item[(ii)] $|N(v)|\geq n/3$,
  \item[(iii)] $t(v)=0$ implies that $p(v)+s(v)=|N(v)| \geq n-p(v)$.
  \end{itemize}
  \end{observation}
Let us fix a vertex $v$ with $d(v)\geq n$. A careful analysis of the neighborhood $N(v)$ will lead to the contradictory conclusion that $e(G_i)\le n^2/18$ for some $i\in\{1,2,\dots,9\}$.  
Observations~\ref{3consec}\,(i) and~\ref{obs:double_3colors}\ imply that $2\le spe(v)\le3$. W.l.o.g.~let $Spe(v)\subseteq\{1,2,3\}$ and assume  there is a vertex $u$ with $vu\in E(G_1)\cap E(G_2)$. 

Observation~\ref{3consec}\,(ii) yields, by Claim~\ref{cla:9col_n/3} with $S=N(v)$, the following observation.
\begin{observation}\label{obs:9col-neighb}
    There exist $w,w'\in N(v)$, not necessarily distinct, such that $w\in \overline{V}(G_i)$ and $w'\in \overline{V}(G_{i'})$ for some $i,i' \ge 4$, $i\neq i'$ (see Fig.~\ref{F:(*)}).
\end{observation}

\begin{figure}[ht]
\begin{center}
\begin{tikzpicture}[scale=1]
    \draw[thick] (-1,0) -- (0,1.6) -- (1,0);
    \draw[thick, color=gptblue] (-1,0) -- (-1,-1.6);
    \draw[thick, color=brilliantrose] (1,0) -- (1,-1.6);
    \draw (-2,1.6) .. controls (-1.6,2) and (-0.4,2) .. (0,1.6);
    \draw (-2,1.6) .. controls (-1.6,1.2) and (-0.4,1.2) .. (0,1.6);
    \draw[fill=black] (-2,1.6) circle [radius=2pt];
    \draw[fill=black] (1,0) circle [radius=2pt];
    \draw[fill=black] (0,1.6) circle [radius=2pt];
    \draw[fill=black] (-1,0) circle [radius=2pt];
    \draw[fill=black] (1,0) circle [radius=2pt];
    \draw[fill=black] (-1,-1.6) circle [radius=2pt];
    \draw[fill=black] (1,-1.6) circle [radius=2pt];

    \node at (-1,-0.8) [left, color=gptblue] {$i$};
    \node at (1,-0.8) [right, color=brilliantrose] {$i'$};
    \node at (-1,1.9) [above] {$1$};
    \node at (-1,1.3) [below] {$2$};
    \node at (0,1.6) [above right] {$v$}; 
    \node at (-2,1.6) [above left] {$u$};
    \node at (-1,0) [left] {$w$};
    \node at (1,0) [right] {$w'$};
    \node at (-1,-1.6) [left] {$x$};
    \node at (1,-1.6) [right] {$x'$};

    \draw[thick] (5,0) -- (5,1.6);
    \draw (3,1.6) .. controls (3.4,2) and (4.6,2) .. (5,1.6);
    \draw (3,1.6) .. controls (3.4,1.2) and (4.6,1.2) .. (5,1.6);
    \draw[thick, color=gptblue] (5,0) -- (4,-1.6);
    \draw[thick, color=brilliantrose] (5,0) -- (6,-1.6);
    \draw[fill=black] (3,1.6) circle [radius=2pt];
    \draw[fill=black] (5,1.6) circle [radius=2pt];
    \draw[fill=black] (5,0) circle [radius=2pt];
    \draw[fill=black] (4,-1.6) circle [radius=2pt];
    \draw[fill=black] (6,-1.6) circle [radius=2pt];

    \node at (4,1.9) [above] {$1$};
    \node at (4,1.3) [below] {$2$};
    \node at (5,1.6) [above right] {$v$}; 
    \node at (3,1.6) [above left] {$u$};
    \node at (5,0) [right] {$w=w'$};
    \node at (4,-1.6) [left] {$x$};
    \node at (6,-1.6) [right] {$x'$};
    \node at (4.5,-0.8) [left, color=gptblue] {$i$};
    \node at (5.5,-0.8) [right, color=brilliantrose] {$i'$};
\end{tikzpicture}
\caption{Possible scenarios for Observation~\ref{obs:9col-neighb}: $N(v)$ has at least five colors in its spectrum. In particular, $Spe(v)\subseteq\{1,2,3\}$ and $i,i'\geq 4$, $i\neq i'$. The presence and location of the edges of color 3 are not important for us.}\label{F:(*)}
\end{center}
\end{figure}

\noindent If $Spe(v)=\{1,2\}$, then there is also a vertex $x\in N(v)\cap\overline{V}(G_{j}) $, for some $j\not\in\{i,i'\}$, $j\ge 3$, but this is irrelevant for us.

Note that Observation~\ref{obs:9col-neighb} implies that $t(v)=0$ as otherwise we have a rainbow $W_4$. Hence, by Observation~\ref{3consec}\,(iii),  
\[|N(v)| \geq n-p(v).\] 

\begin{claim}\label{cla:cons_of_(*)}
There exists $j\in\{1,2\}$ such that for any vertex $w\in N(v)\cap \bigcup_{i=4}^9\overline{V}(G_i)$ we have \hbox{$m(v,w)=1$} and $vw\in E(G_j)$.
\end{claim}

\begin{proof}
Let $w\in N(v)\cap \overline{V}(G_i)$ for some $i\geq 4$ and let $x$ be such that $wx\in G_i$. Clearly, $vw\not\in G_3$, since then $xwvuv$, for any $u$ such that $vu\in E(G_1)\cap E(G_2)$, would form a rainbow $W_4$. By Observation~\ref{obs:9col-neighb}, there exists a vertex $w'\in N(v)$, a vertex $x'\in V$, and a color $i'\geq 4$, $i'\neq i$, such that $w'x'\in G_{i'}$ (see Fig.~\ref{F:(*)}). Suppose $m(v,w)\geq 2$. Then,  $xwvw'x'$ is a rainbow walk, since one can choose the color of the edge $wv$ to be different from that of the edge $vw'$. Hence $m(v,w)=1$.
Similarly, if the color of the edge $vw$ is different from the color of the edge $vw'$, then $xwvw'x'$ is a rainbow $W_4$. Thus, there is $j\in\{1,2\}$  such that $m(v,w)=1$ and $vw\in E(G_j)$ for all $w\in N(v)\cap \bigcup_{i=4}^9\overline{V}(G_i)$. 
\end{proof}

W.l.o.g.~assume that the unique color $j$ given by Claim~\ref{cla:cons_of_(*)} is 1. Let 
\[X:=\{u\in V: Spe(u)\subseteq\{1,2,3\}\}\] and 
\[P:=\{u\in V: m(v,u)=2\}.\] 
Note that by Claim~\ref{cla:cons_of_(*)}, it holds $N(v)\setminus X = N_1(v)\setminus X$ and for every vertex $u\in P$ we have $Spe(u)\subseteq\{1,2,3\}$, thus $P \subseteq X$.

We next partition the set $N_1(v)\setminus X$ according to the spectrum of its vertices. Note that for every vertex $w\in N_1(v) \setminus X$ there exists at most one index $i\ge4$ such that $w\in \overline{V}(G_i[N_1(v)])$. Indeed, if $w,x,x'\in N_1(v)$ with $wx\in G_i$, $wx'\in G_{i'}$, for some $i,i'\geq 4$, $i\neq i'$, then $x'wxv$ is a rainbow $W_3$ using colors $i',i,1$ which can be extended to a rainbow $W_4$ using a neighbor $u$ of $v$ with $vu\in E(G_2)$. 

Let $A_i:= \overline{V}(G_i[N_1(v)])$, $i=4,\dots,9$.
Then
 $$N_1(v)\setminus X=A_4\cup A_5\cup A_6\cup A_7\cup A_8\cup A_9\cup R,$$
where $R$ is the set of the remaining vertices of $N_1(v)\setminus X$, that is, vertices $w$ with $m(v,w)=1$ and $w\notin \bigcup_{i=4}^9\overline{V}(G_i[N_1(v)])$ (see Fig.~\ref{F4}). In particular, no graph $G_i$, $i=4,\dots,9$, has an edge inside $R$ and between $A_i$ and $R$. 
Let $a_i=|A_i|$, $i=4,\dots,9$, and $r=|R|$.

Similarly, one can partition
$$V\setminus (X\cup N_1(v))=B_4\cup B_5\cup B_6\cup B_7\cup B_8\cup B_9\cup Z,$$
where 
\[B_i=\{u\in V\setminus (X\cup N(v)): Spe(u)\setminus\{1,2,3\}=\{i\} \},\] $i \geq 4$, while
\[Z=\{u\in V\setminus (X\cup N(v)): |Spe(u)\setminus\{1,2,3\}|\ge2 \}.\] 
Let $b_i=|B_i|$, $i=4,\dots,9$, and $z=|Z|$. 

\begin{figure}[ht]
\begin{center}
\begin{tikzpicture}[scale=1]

\draw[fill=black] (2,4.5) circle [radius=2pt];
\node at (2,4.5) [above] {$v$};

\draw [ultra thick] (2,4.5) -- (1,2.7);
\draw [ultra thick] (2,4.5) -- (4,2.7);
\draw [ultra thick] (2,4.5) -- (6,2.7);
\draw [ultra thick] (2,4.5) .. controls (1.6,4.9) and (0.1,4.9) .. (-0.3,4.5);
\draw [ultra thick, color=applegreen] (2,4.5) .. controls (1.6,4.1) and (0.1,4.1) .. (-0.3,4.5);

\draw (0,4.5) ellipse (3cm and 1.2cm);
\draw[fill=white] (-1,4.5) circle [radius=0.7];
\draw[fill=white] (1,2) circle [radius=0.7];
\draw[fill=white] (4,2) circle [radius=0.7];
\draw[fill=white] (6,2) circle [radius=0.7];

\node at (-2.7,3.3) [above] {$X$};
\node at (-1.6,3.5) [above] {$P$};
\node at (0.4,1) [above] {$A_4$};
\node at (2.5,2) {$\dots$};
\node at (3.4,1) [above] {$A_9$};
\node at (6.6,1) [above] {$R$};

\draw[fill=white] (1,-1) circle [radius=0.7];
\draw[fill=white] (4,-1) circle [radius=0.7];
\draw[fill=white] (6,-1) circle [radius=0.7];

\node at (0.4,-2) [above] {$B_4$};
\node at (2.5,-1) {$\dots$};
\node at (3.4,-2) [above] {$B_9$};
\node at (6.6,-2) [above] {$Z$};

\node at (5.4,3.1) [right] {1};
\node at (3.6,3.1) [right] {1};
\node at (1.2,3.1) [right] {1};

\draw[ultra thick, color=gptblue] (1,1.3) -- (1,-0.3);
\draw[ultra thick, color=gptblue] (6,1.3) -- (1,-0.3);
\draw[ultra thick, color=gptblue] (6,-1.7) .. controls (5.5,-2.5) and (1.5,-2.5) .. (1,-1.7);
\draw[ultra thick, color=gptblue] (0.6,2) -- (1.4,2);
\draw[ultra thick, color=gptblue] (0.6,-1) -- (1.4,-1);
\draw[ultra thick, color=gptblue] (5.6,-0.8) -- (6.4,-0.8);
\node at (1,2) [above] {\textcolor{gptblue}{4}};
\node at (1,-1) [above] {\textcolor{gptblue}{4}};
\node at (1,0.5) [left] {\textcolor{gptblue}{4}};
\node at (3,0.5) [left] {\textcolor{gptblue}{4}};
\node at (3.5,-2.3) [below] {\textcolor{gptblue}{4}};
\node at (6,-0.8) [above] {\textcolor{gptblue}{4}};

\draw[ultra thick, color=brilliantrose] (3.6,2) -- (4.4,2);
\draw[ultra thick, color=brilliantrose] (3.6,-1) -- (4.4,-1);
\draw[ultra thick, color=brilliantrose] (4,1.3) -- (4,-0.3);
\draw[ultra thick, color=brilliantrose] (6,1.3) -- (4,-0.3);
\draw[ultra thick, color=brilliantrose] (4.7,-1) -- (5.3,-1);
\draw[ultra thick, color=brilliantrose] (5.6,-1.2) -- (6.4,-1.2);

\node at (4,2) [above] {\textcolor{brilliantrose}{9}};
\node at (4,-1) [above] {\textcolor{brilliantrose}{9}};
\node at (4,0.5) [right] {\textcolor{brilliantrose}{9}};
\node at (5.1,0.5) [right] {\textcolor{brilliantrose}{9}};
\node at (5,-1) [below] {\textcolor{brilliantrose}{9}};
\node at (6,-1) {$\dots$};
\node at (6,-1.2) [below] {\textcolor{brilliantrose}{9}};

\node at (-1,4.5) [above] {$u$};
\node at (0.85,4.8) [above] {$1$};
\node at (0.85,4.2) [below,  color=applegreen] {$2$};

\draw[fill=black] (2,4.5) circle [radius=2pt];
\draw[fill=black] (-1,4.5) circle [radius=2pt];
    
\end{tikzpicture}
\caption{The partitions of $N_1(v)\setminus X$ and $V\setminus(N_1(v)\cup X)$ according to the type of spectrum. The thick, labeled edges correspond to possible edges of $M$. Each vertex in $P$ is adjacent to $v$ in at least two graphs from $G_1$, $G_2$, $G_3$.}\label{F4}
\end{center}
\end{figure}

Note that if $wx \in E(G_i)$ for $w \in A_j$, $x \in B_h$ with $i,j,h \in \{4,\ldots,9\}$, then $i=j=h$ as otherwise it would create a rainbow $W_4$. Similarly, any edge $wx \in E(G_i)$ for $w \in N_1 \setminus X$, $x \in Z$ with $i \in \{4,\ldots,9\}$ creates a rainbow $W_4$.  
Therefore, for every $i \geq 4$, the edges of $G_i$ can  only be contained in $A_i \cup B_i$, or in $Z$, or between $B_i$ and $R \cup Z$. This implies that, very crudely, for every $i\ge4$,
\begin{equation}\label{eGi}
e(G_i) \leq \frac{1}{2}(a_i+b_i+z)^2  + b_i r.
\end{equation}

Set $p:=p(v)$ and note  that $r +p+z\le n$, since the respective sets are disjoint. Also, by the definition of $b_i$'s and $z$, and by~Observation~\ref{3consec}\,(iii),
$$\sum_{i=4}^9 b_i +z \leq n-|N(v)|\le p.$$

W.l.o.g.~assume that $b_4=\min_{i \geq 4}b_i$. This and the above estimates imply that
\[b_4 \leq \frac{1}{6}\sum_{i\geq4} b_i \leq \frac{1}{6}(n-|N(v)|-z) \leq \frac{1}{6}(p-z).\]
It follows that $z \leq p$.
Moreover, for any $i\geq 4$, Claim~\ref{cla:9col_n/3} applied to $S=A_i\cup B_i\cup P$ yields that $a_i+b_i+p \leq \frac{n}{3}$. In particular, $p \leq \frac{n}{3}$ and $a_4+b_4+z \leq \frac{n}{3}-p+z$.

Finally, by~\eqref{eGi} with $i=4$,
\begin{align*}
e(G_4) &\leq \frac{1}{2}(a_4+b_4+z)^2 + b_4 r\\
&\leq \frac{1}{18}(n-3p+3z)^2 + \frac{1}{6}(p-z)(n-p-z)\\
&\leq \max\left\{\frac{1}{18}(n-3p)^2 + \frac{1}{6}p(n-p),\ \frac{1}{18}n^2\right\}\\
&\leq \max\left\{\frac{1}{18}n^2,\ \frac{1}{27}n^2,\ \frac{1}{18}n^2\right\} = \frac{1}{18}n^2,
\end{align*}
because the quadratic function of $z$ in the second line (treating $n$ and $p$ as parameters) is convex and attains its maximum for $z=0$ or $z=p$, while the quadratic function of $p$ in the third line, also convex, attains its maximum for $p=0$ or $p=n/3$. This contradicts the assumption on $e(G_4)$ and thus completes the proof of Theorem~\ref{thm:p5min_upper9}.
\end{proof}

\section{Rainbow $P_5$ in complete colored multi-graphs}\label{complete}

In this section, we prove Theorem~\ref{thm:P5_complete}. In view of the lower bound construction given in Section~\ref{res_min}, it suffices to prove the following  statement. 

\begin{theorem}\label{thm:P5_complete_upper}
For all $n\ge 21$, every complete colored multi-graph $M(G_1, G_2, G_3, G_4)$ on $n$ vertices, with 
\begin{equation}\label{ass}e(G_i)\geq
    \begin{cases}
        \binom{\lfloor n/3 \rfloor}2+1 & \mathrm{if}\quad n \not\equiv 2 \pmod 3\,,\\
        \binom{\lfloor n/3 \rfloor}2 + 2 & \mathrm{if}\quad n \equiv 2 \pmod 3\,.\\
    \end{cases}
    \end{equation}
for each $i \in\{1,2,3,4\}$, contains a rainbow $P_5$.
\end{theorem}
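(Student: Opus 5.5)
We argue by contradiction: fix a complete colored multi-graph $M=M(G_1,\dots,G_4)$ on $n\ge 21$ vertices satisfying \eqref{ass} with no rainbow $P_5$. We then show that some color class is confined to a small vertex set, which bounds its number of edges.

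\emph{Step 1: completeness forces structure.} Completeness means every pair carries at least one color, so the underlying simple graph is $K_n$. We will use this constantly: any two vertices are adjacent in some color. The first goal is to show that some color, say $G_4$, must be ``dominant'', i.e. appear on almost all pairs. Suppose first that some vertex $v$ has $spe(v)=4$. Then consider vertices $x$ adjacent to $v$ in color $i$, and look at the edges from $x$ to the rest of $K_n$. With $n\ge 21$, the completeness gives many paths $y\,x\,v\,x'\,y'$ whose coloring we control, and the 1-2-3-4 rule then forbids most color patterns. From this we expect to derive that for every vertex $v$ at most one color class is ``large'' at $v$.

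\emph{Step 2: the three small colors live on near-disjoint sets.} Let $A_i=\overline{V}(G_i)$ for $i=1,2,3$, where $G_4$ is the dominant color. The key lemma we will aim for is that if $x\in A_i\cap A_j$ for $i\ne j$ in $\{1,2,3\}$, then the component structure collapses. Indeed, suppose $x$ is incident to color $i$ via $xy$ and to color $j$ via $xz$. Any $w$ outside $\{x,y,z\}$ is joined to $y$ or $z$ by some color, because the graph is complete. So we get a path $w\,y\,x\,z\,w'$ that uses colors $4$, $i$, $j$ plus one more, unless all edges from $\{y,z\}$ outward are in colors $i,j$. We will do a small case analysis to show that the connected components of $G_1\cup G_2\cup G_3$ are either monochromatic-ish pieces or tiny sets of bounded size. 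Here we will use the rule that a rainbow $P_4$ in colors $1,2,3$ extends by a color-$4$ edge to any outside vertex, and that outside vertex exists by completeness. This gives: every component $C$ of $G_1\cup G_2\cup G_3$ containing a rainbow $P_4$ in colors $\{1,2,3\}$ must be all of $V$ or have all outside edges avoid color $4$, and both options are contradictory for $n\ge 21$. Then we will show that a component spanning two of the colors $\{1,2,3\}$ has at most a constant number of vertices (like the star example at $n=20$, which is why small $n$ is excluded).

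\emph{Step 3: counting.} Once the components are classified, $V$ splits into disjoint sets $V_1,V_2,V_3$, each carrying essentially a single color among $1,2,3$, plus boundedly many exceptional vertices that carry several colors but only $O(n)$, indeed $O(1)$, edges in total. Then $\min_i e(G_i)\le \binom{|V_i|}{2}+c$ with $\sum|V_i|\le n$. By convexity the minimum is maximized at balanced sizes, which gives $\binom{\lfloor n/3\rfloor}2$, plus $1$ when $n\equiv2\pmod 3$ (two leftover vertices joined in all colors). The main obstacle is Step 2: making the exceptional configurations precise enough to get the exact additive constant rather than $o(n^2)$. We expect to handle this by showing that exceptional multi-colored components have at most two vertices outside the three big parts, then checking the leftover cases directly, with $n\ge 21$ used to beat the star configuration.
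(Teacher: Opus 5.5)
Your outline ends in the right place: one dominant color, the other three confined to essentially disjoint parts, then a balancing count. However, the structural core is asserted rather than proved, and the arguments you sketch do not work.

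\textbf{Steps 1 and 2.} Step 1 says only that you ``expect to derive'' the structure, and its target cannot come from local reasoning. In the extremal construction, which is one edge below the threshold, every vertex of $A_i$ has degree about $n/3$ in $G_i$ and $n-1$ in $G_4$. In the variant where $G_4$ is only the complete multipartite graph on $A_1,A_2,A_3$ and the remaining vertices, the dominant color covers only about two thirds of the pairs, not almost all.

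In Step 2, take $xy\in E(G_i)$ and $xz\in E(G_j)$. The path $wyxzw'$ is rainbow only if $wy$ and $zw'$ carry color $4$ and the third small color $h$. If every edge leaving $\{y,z\}$ has color $4$, you get nothing, so your dichotomy is the wrong one.

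Likewise, completeness gives an edge from the end of a rainbow $P_4$ to an outside vertex in \emph{some} color, not necessarily color $4$. This only helps for vertices outside the component of $G_1\cup G_2\cup G_3$. The case where that component is all of $V$ is dismissed without argument.

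\textbf{The missing idea.} What you need is the paper's key lemma: $M$ contains no rainbow $K_{1,3}$. It is proved by a case analysis on where the edges of the fourth color lie relative to the claw. This is exactly where completeness and the bound $e(G_i)\ge n$ (valid for $n\ge 21$) enter; the $n=20$ star example does contain a rainbow claw.

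From this lemma it follows that any two rainbow paths $P_3$ share a color. Since every color lies on some rainbow $P_3$, a single color lies on all of them; this is the correct notion of dominance. In the paper's labeling the dominant color is $1$, and the lemma immediately gives the partition $V=V_1\cup V_{1,2}\cup V_{1,3}\cup V_{1,4}\cup S$. Each non-dominant color $j$ lives inside $V_{1,j}$ or on a matching of multi-edges in $S$.

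\textbf{Step 3.} Your count rests on a claim that is false at the structural level: the number of exceptional multi-colored vertices is not bounded. Let the three small colors be cliques on disjoint sets $A_1,A_2,A_3$ together with a common perfect matching on a set $S$ of any even size, and put the dominant color on all remaining pairs. This multi-graph is complete and has no rainbow $P_5$. Indeed, the three small-color edges of such a path would have to be pairwise non-adjacent, which is impossible in a path with four edges.

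So $s=|S|$ is limited only by the edge count, and you must keep it as a parameter. For the small color $j$ with the smallest part, $e(G_j)\le s/2+\binom{\lfloor (n-s)/3\rfloor}{2}$. You then treat $s=0$, $s=2$ (the source of the extra $+1$ when $n\equiv 2\pmod 3$), and $s\ge 4$ by convexity in $s$. A bound of the form $\binom{|V_i|}{2}+c$ with a fixed constant $c$ cannot produce the exact additive term.
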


\begin{proof}
For the contrary, let $M$ be a complete colored multi-graph on $V$ satisfying assumption~\eqref{ass} and containing no rainbow $P_5$. 
We will first eliminate a certain rainbow subgraphs from $M$.

\begin{claim}\label{cla:complete-claw}
    $M$ contains no rainbow $K_{1,3}$.   
\end{claim}

\begin{proof}
To the contrary, w.l.o.g.~suppose that the colored multi-graph $M$ contains a subset of four vertices $U=\{u, v_1, v_2, v_3\}$ with $uv_i\in E(G_i)$, for $i \in \{1,2,3\}$. We consider two main cases depending on the location of the edges of $G_4$ with respect to $U$.

\smallskip
{\bf Case 1:} There is an edge $yz\in E(G_4)$ for some $y,z \in V\setminus U$.

\smallskip
{\bf Case 1a:} There exists a vertex $x \in V\setminus \{u,z\}$ such that $xy\in E(G_1)\cup E(G_2)\cup E(G_3)$, say $xy\in E(G_1)$. 
The absence of a rainbow $P_5$ in $M$ yields $x \not\in\{v_2,v_3\}$, $v_2y\in E(G_2)$, $v_3y\in E(G_3)$, and, in turn,  $v_1y\in E(G_1)$. Now consider the edge $v_1v_2$. In order not to complete a rainbow $P_5$, one must have $v_1v_2 \in E(G_1)\cup E(G_2)$. However, in either case, the edge $v_3z$ would complete a rainbow $P_5$ (see Fig.~\ref{Fig:claw}), a contradiction.  

\smallskip
{\bf Case 1b:} For all $x \in V\setminus \{u\}$ we have $xy\in E(G_4)$, and, by symmetry, for all $x \in V\setminus \{u\}$ we have $xz\in E(G_4)$. Note first that to avoid a rainbow $P_5$, necessarily $v_1v_2\in E(G_3)$, $v_1v_3\in E(G_2)$ and $v_2v_3\in E(G_1)$ (see Fig.~\ref{Fig:case1b}). Next, suppose there exists $e\in E(G_1)\cup E(G_2) \cup E(G_3)$, other than $v_1v_2, v_1v_3, v_2v_3$ and such that $e\cap\{u,y,z\}=\emptyset$. Let $v\in e\setminus \{v_1,v_2,v_3\}$. Then $vy\in E(G_4)$ and we are back in Case 1a. Thus, the only edges in $G_1, G_2, G_3$, other then those spanned by vertices in $U$, must be incident to $u$. Since by~\eqref{ass} the number of edges in each color is at least $n$, for every $v \in V \setminus\{u\}$ the edge $uv \in E(G_1)\cap E(G_2)\cap E(G_3)$. In particular $uy\in E(G_1)\cap E(G_2)\cap E(G_3)$. This leads to a rainbow $P_5$ with the vertex sequence $zyuv_1v_2$, again a contradiction.  

\smallskip
{\bf Case 2:} All edges in $G_4$ are incident to $U$. 

\smallskip
{\bf Case 2a:} There exists $i\in\{1,2,3\}$ such that $|N_4(v_i)\setminus U| \geq 2$. Let $x,y\in N_4(v_i)\setminus U$, $x\neq y$. Then to avoid a rainbow $P_5$ we need $xy\in E(G_i)$. But then for $j \neq i$ any edge $v_jx$ leads to a rainbow $P_5$ (see Fig.~\ref{Fig:case2} (left)). 

\smallskip
{\bf Case 2b:} For each $i \in \{1,2,3\}$, $|N_4(v_i)\setminus U|\le 1$. First, suppose that there is a vertex $v_4\in N_4(u)\setminus U$. Take any pair of vertices $x,y$ in $V \setminus \{u,v_1,v_2,v_3,v_4\}$. Since $M$ is complete, $xy\in E(G_i)$ for some $i \in \{1,2,3,4\}$. But then $xy$ together with  $\{u,v_1,v_2,v_3,v_4\} \setminus \{v_i\}$ creates a structure already considered in Case 1 (possibly by reordering the colors) -- see Fig.~\ref{Fig:case2} (right)). 
Therefore, $N_4(u)\setminus U=\emptyset$, and thus $e(G_4)\leq 6 + 3$ which grossly contradicts assumption~\eqref{ass} for $i=4$.
\end{proof}

\begin{figure}[ht]
\begin{center}
\begin{tikzpicture}[scale=1]
    \node at (0,0) [left] {$u$};
    \node at (1.6,1.6) [above] {$v_1$};
    \node at (2,0) [below] {$v_2$};
    \node at (1.6,-1.6) [below] {$v_3$};
    \node at (5,0) [right] {$y$};
    \node at (3.4,1.6) [above] {$x$};
    \node at (3.4,-1.6) [below] {$z$};

    \node at (8,0) [left] {$u$};
    \node at (9.6,1.6) [above] {$v_1$};
    \node at (10,0) [below] {$v_2$};
    \node at (9.6,-1.6) [below] {$v_3$};
    \node at (13,0) [right] {$y$};
    \node at (11.4,1.6) [above] {$x$};
    \node at (11.4,-1.6) [below] {$z$};

    \node at (0.8,0.8) [above, color=applegreen] {1};
    \node at (4.2,0.8) [above, color=applegreen] {1};
    \node at (0.8,-0.8) [below, color=brilliantrose] {3};
    \node at (4.2,-0.8) [below, color=black] {4};
    \node at (1.2,0) [below, color=gptblue] {2};

    \node at (8.8,0.8) [above, color=applegreen] {1};
    \node at (11.2,0.8) [above, color=applegreen] {1};
    \node at (12.2,0.8) [above, color=applegreen] {1};
    \node at (8.8,-0.8) [below, color=brilliantrose] {3};
    \node at (11.2,-0.8) [below, color=brilliantrose] {3};
    \node at (12.2,-0.8) [below, color=black] {4};
    \node at (9.2,0) [below, color=gptblue] {2};
    \node at (11,0) [below, color=gptblue] {2};
    \node at (10.1,0.2) [above, color=applegreen] {1};
    \node at (10.5,0.2) [above] {or};
    \node at (10.9,0.2) [above, color=gptblue] {2};

    \draw[thick, color=applegreen] (0,0) -- (1.6,1.6) (5,0) -- (3.4,1.6);
    \draw[thick, color=gptblue] (0,0) -- (2,0);
    \draw[thick, color=brilliantrose] (0,0) -- (1.6,-1.6);
    \draw[thick] (5,0) -- (3.4,-1.6);

    \draw[thick, color=applegreen] (8,0) -- (9.6,1.6) (13,0) -- (11.4,1.6) (13,0) -- (9.6,1.6) (9.6,1.6) -- (10,0);
    \draw[thick, dashed, color=gptblue] (9.6,1.6) -- (10,0);
    \draw[thick, color=gptblue] (8,0) -- (10,0) (13,0) -- (10,0);
    \draw[thick, color=brilliantrose] (8,0) -- (9.6,-1.6) (13,0) -- (9.6,-1.6);
    \draw[thick] (13,0) -- (11.4,-1.6);
    \draw[thick, dashed] (9.6,-1.6) -- (11.4,-1.6);

    \draw[fill=black] (0,0) circle [radius=2pt];
    \draw[fill=black] (2,0) circle [radius=2pt];
    \draw[fill=black] (1.6,1.6) circle [radius=2pt];
    \draw[fill=black] (1.6,-1.6) circle [radius=2pt];
    \draw[fill=black] (5,0) circle [radius=2pt];
    \draw[fill=black] (3.4,1.6) circle [radius=2pt];
    \draw[fill=black] (3.4,-1.6) circle [radius=2pt];

    \draw[fill=black] (8,0) circle [radius=2pt];
    \draw[fill=black] (10,0) circle [radius=2pt];
    \draw[fill=black] (9.6,1.6) circle [radius=2pt];
    \draw[fill=black] (9.6,-1.6) circle [radius=2pt];
    \draw[fill=black] (13,0) circle [radius=2pt];
    \draw[fill=black] (11.4,1.6) circle [radius=2pt];
    \draw[fill=black] (11.4,-1.6) circle [radius=2pt];

\end{tikzpicture}
\caption{In Case 1a, a rainbow $K_{1,3}$ forces a rainbow $P_5$; note that possibly $x=v_1$.}\label{Fig:claw}
\end{center}
\end{figure}

\begin{figure}[ht]
\centering
\begin{tikzpicture}[
    vertex/.style={circle, fill=black, inner sep=2pt},
    edge/.style={thick},
    c1/.style={thick, applegreen},
    c2/.style={thick, gptblue},
    c3/.style={thick, brilliantrose},
    c4/.style={thick, black},
]


\coordinate (Lu)  at (0,0);
\coordinate (Lv1) at (2.4,1.6);
\coordinate (Lv2) at (1.8,0);
\coordinate (Lv3) at (2.4,-1.6);
\coordinate (Lx)  at (4,1.6);
\coordinate (Ly)  at (5.6,0);
\coordinate (Lz)  at (4,-1.6);

\node[left]  at (Lu)  {$u$};
\node[above] at (Lv1) {$v_1$};
\node[below, xshift=-1.5mm] at (Lv2) {$v_2$};
\node[below] at (Lv3) {$v_3$};
\node[above] at (Lx)  {$x$};
\node[right] at (Ly)  {$y$};
\node[below] at (Lz)  {$z$};

\draw[c1] (Lu)  -- node[midway, above, applegreen] {1} (Lv1);
\draw[c2] (Lu)  -- node[midway, below, gptblue] {2} (Lv2);
\draw[c3] (Lu)  -- node[midway, below, brilliantrose] {3} (Lv3);

\draw[c1] (Lv2)  -- node[midway, left, applegreen] {1} (Lv3);
\draw[c2] (Lv1)  -- node[midway, right, gptblue] {2} (Lv3);
\draw[c3] (Lv1)  -- node[midway, left, brilliantrose] {3} (Lv2);

\draw[c3] (Lv1) -- (Lv2);
\draw[c1] (Lv2) -- (Lv3);
\draw[c2] (Lv1) -- (Lv3);

\draw[c4] (Ly) -- node[midway, below] {4} (Lx);
\draw[c4] (Ly) -- node[midway, below] {4} (Lz);
\draw[c4] (Lx) -- node[midway, right] {4} (Lz);

\foreach \p in {Lu,Lv1,Lv2,Lv3,Lx,Ly,Lz}
    \node[vertex] at (\p) {};


\coordinate (Ru)  at (8,0);
\coordinate (Rv1) at (10.4,1.6);
\coordinate (Rv2) at (9.8,0);
\coordinate (Rv3) at (10.4,-1.6);
\coordinate (Rx)  at (12,1.6);
\coordinate (Ry)  at (13.6,0);
\coordinate (Rz)  at (12,-1.6);
\coordinate (Rv)  at (15.2,0);
\coordinate (Ra)  at (15.2,1.6);

\node[left]  at (Ru)  {$u$};
\node[above] at (Rv1) {$v_1$};
\node[below, xshift=-1.5mm] at (Rv2) {$v_2$};
\node[below] at (Rv3) {$v_3$};
\node[above] at (Rx)  {$x$};
\node[above] at (Ry)  {$y$};
\node[below] at (Rz)  {$z$};
\node[right] at (Rv)  {$v$};

\newcommand{\triedge}[3]{
  \draw[applegreen,    line width=3.0pt, line cap=round] (#1) -- (#2);
  \draw[gptblue,       line width=2.0pt, line cap=round] (#1) -- (#2);
  \draw[brilliantrose, line width=1.0pt, line cap=round]
    (#1) -- (#2)
    node[midway, sloped, #3] {
      \textcolor{applegreen}{1}\,
      \textcolor{gptblue}{2}\,
      \textcolor{brilliantrose}{3}
    };
}

\triedge{Ru}{Rv1}{above=2pt}
\triedge{Ru}{Rv2}{above=0pt, xshift=10pt}
\triedge{Ru}{Rv3}{below=2pt}

\draw[c1] (Rv2)  -- node[midway, left, applegreen] {1} (Rv3);
\draw[c2] (Rv1)  -- node[midway, right, gptblue] {2} (Rv3);
\draw[c3] (Rv1)  -- node[midway, left, brilliantrose] {3} (Rv2);
\draw[c1] (Rv)  -- node[midway, left, applegreen] {1} (Ra);

\draw[c3] (Rv1) -- (Rv2);
\draw[c1] (Rv2) -- (Rv3);
\draw[c2] (Rv1) -- (Rv3);
\draw[c1] (Rv) -- (Ra);

\draw[c4] (Ry) -- node[midway, below] {4} (Rx);
\draw[c4] (Ry) -- node[midway, below] {4} (Rz);
\draw[c4] (Rx) -- node[midway, right] {4} (Rz);
\draw[c4] (Rv) -- node[midway, above] {4} (Ry);

\foreach \p in {Ru,Rv1,Rv2,Rv3,Rx,Ry,Rz,Rv,Ra}
    \node[vertex] at (\p) {};

\end{tikzpicture}

\caption{In Case 1b, a rainbow $K_{1,3}$ forces a rainbow $P_5$;
note that possibly $x=v_i$ for any $i\in\{1,2,3\}$.}
\label{Fig:case1b}
\end{figure}

\begin{figure}[ht]
\centering
\begin{tikzpicture}[
    vertex/.style={circle, fill=black, inner sep=2pt},
    edge/.style={thick},
    c1/.style={thick, applegreen},
    c2/.style={thick, gptblue},
    c3/.style={thick, brilliantrose},
    c4/.style={thick, black},
    c5/.style={thick, gptorange},
]


\coordinate (Lu)  at (0,0);
\coordinate (Lv1) at (1.6,1.6);
\coordinate (Lv2) at (2,0);
\coordinate (Lv3) at (1.6,-1.6);
\coordinate (Lx)  at (4,0.8);
\coordinate (Ly)  at (4,-0.8);

\node[left]  at (Lu)  {$u$};
\node[above] at (Lv1) {$v_1$};
\node[below] at (Lv2) {$v_2$};
\node[below] at (Lv3) {$v_3$};
\node[right] at (Lx)  {$x$};
\node[right] at (Ly)  {$y$};

\draw[c1] (Lu)  -- node[midway, above, applegreen] {1} (Lv1);
\draw[c2] (Lu)  -- node[midway, below, gptblue] {2} (Lv2);
\draw[c3] (Lu)  -- node[midway, below, brilliantrose] {3} (Lv3);
\draw[c2] (Ly) -- node[midway, right, gptblue] {2} (Lx);
\draw[c4] (Lv2) -- node[midway, above] {4} (Lx);
\draw[c4] (Lv2) -- node[midway, below] {4} (Ly);
\draw[c4, dashed] (Lv1) -- (Lx);

\foreach \p in {Lu,Lv1,Lv2,Lv3,Lx,Ly}
    \node[vertex] at (\p) {};


\coordinate (Ru)  at (8,0);
\coordinate (Rv1) at (9.6,1.6);
\coordinate (Rv2) at (10,0.65);
\coordinate (Rv3) at (10,-0.65);
\coordinate (Rv4) at (9.6,-1.6);
\coordinate (Rx)  at (12,0.8);
\coordinate (Ry)  at (12,-0.8);

\node[left]  at (Ru)  {$u$};
\node[above] at (Rv1) {$v_1$};
\node[above] at (Rv2) {$v_2$};
\node[below] at (Rv3) {$v_3$};
\node[below] at (Rv4) {$v_4$};
\node[right] at (Rx)  {$x$};
\node[right] at (Ry)  {$y$};

\draw[c1] (Ru)  -- node[midway, above, applegreen] {1} (Rv1);
\draw[c2] (Ru)  -- node[midway, above, gptblue] {2} (Rv2);
\draw[c3] (Ru)  -- node[midway, below, brilliantrose] {3} (Rv3);
\draw[c4] (Ru)  -- node[midway, below, black] {4} (Rv4);

\draw[c1] (Ry)  -- node[midway, right, applegreen] {1} (Rx);

\foreach \p in {Ru,Rv1,Rv2,Rv3,Rv4,Rx,Ry}
    \node[vertex] at (\p) {};

\end{tikzpicture}

\caption{Illustration to  Cases 2a and 2b in the proof of Claim~\ref{cla:complete-claw}. }
\label{Fig:case2}
\end{figure}

In the forthcoming argument it will come in handy to call a rainbow path $P_3$ in colors $i$ and $j$ an \emph{$ij$-path}. 

\begin{claim}\label{cherries}
For every $i\in\{1,2,3,4\}$ there is in $M$ a rainbow path $P_3$ using color $i$. Moreover, if for $i,j,h,\ell\in\{1,2,3,4\}$, $M$ contains an $ij$-path and an $h\ell$-path, then  $\{i,j\}\cap\{h,\ell\}\neq\emptyset$.
\end{claim}

\begin{proof}
Fix $i\in\{1,2,3,4\}$. If there were no rainbow $P_3$'s using color $i$, then, by completeness of $M$, for every $v\in V$ one would have $d_i(v)=n-1$ or $d_i(v)=0$. Since by~\eqref{ass} $e(G_i)>0$, it follows that $G_i=K_n$ and then any edge in another color forms a rainbow $P_3$ using color $i$ (in fact, many). 

For the second part, w.l.o.g.~suppose that in $M$ there are a 12-path $abc$ and a $34$-path $xyz$. 
We consider cases with respect to how much the two vertex sets overlap.

If $y=b$, then we have a rainbow $K_{1,3}$ contradicting Claim~\ref{cla:complete-claw}, unless $x=a$ and $z=c$. But then, for any vertex $w$, any color of the edge $yw$ again give a rainbow $K_{1,3}$. 

If $y=a$, then we have a rainbow $K_{1,3}$ contradicting Claim~\ref{cla:complete-claw}, unless $x=a$. Now, to avoid a rainbow $K_{1,3}$, all edges incident to $b$, except $ab$ should belong to $G_2$, while all edges incident to $a$, except $ab$ should belong to $G_4$. But then, there is an edge $uv \in E(G_1)$ different from $ab$ and $cz$, creating a rainbow $P_5$, namely $uvbaz$ (if $z \not\in\{u,v\}$) or $uvabc$ (if $c \not\in\{u,v\}$). 

Therefore, by symmetry, we may assume that vertices $b$ and $y$ are different from $a,c,x,z$. Now, any color of the edge $by$ creates a rainbow $K_{1,3}$ contradicting Claim~\ref{cla:complete-claw}.
\end{proof}

Let us analyze the consequences of Claim~\ref{cherries}. W.l.o.g.~assume there is a $12$-path and a $13$-path in $M$. This implies that there is no $24$-path and no $34$-path, so there must be a $14$-path as well. The presence of these three rainbow paths excludes the presence of any other rainbow paths of length two.

\begin{corollary}\label{no}  There are no $23$-paths, $24$-paths, and $34$-paths in $M$.
\end{corollary}

All of this means that the structure of $M$ is somewhat special. 

\begin{observation}
There exists a partition
$V = V_1 \cup V_{1,2} \cup V_{1,3} \cup V_{1,4} \cup S$, where
\begin{itemize}
    \item $V_{1} = \{v\in V : Spe(v) =\{1\}\}$, 
    \item $V_{1,j} = \{v\in V\setminus V_1: Spe(v) = \{1,j\}\}$, for $j\in\{2,3,4\}$, 
    \item $M[S]\setminus G_1$ is a matching of multiple edges (with multiplicity $2$ or $3$) in colors $2,3$, and~$4$,
\end{itemize} 
and all edges between the above five sets have only color 1.
\end{observation}

\begin{proof}
First observe that there are no vertices $v$ with $Spe(v)=\{j\}$ for $j\in\{2,3,4\}$. Indeed, suppose that, say, $Spe(v)=\{2\}$. Then, by Corollary~\ref{no}, for all $v'\neq v$ one has $vv'\in E(G_2)$. Since $G_3\neq\emptyset$, there is an edge $uw\in E(G_3)$ with $v\notin\{u,w\}$. But then $vuw$ is a $23$-path, which contradicts Corollary~\ref{no}. 

Therefore, the set $S=V\setminus ( V_1 \cup V_{1,2} \cup V_{1,3} \cup V_{1,4})$ consists of all vertices $v$ such that $|Spe(v)\cap\{2,3,4\}|\geq 2$. If $v \in S$, say $\{2,3\} \subseteq Spe(v)$, then, by Corollary~\ref{no}, there exists a vertex $w \in S$ such that $vw \in E(G_2) \cap E(G_3)$, and for any $u\in V  \setminus\{v,w\}$ one has $uv, uw \not\in E(G_2) \cup E(G_3) \cup E(G_4)$.
Hence, $M[S]\setminus G_1$ is a matching consisting of multiple edges in colors $2,3$, and~$4$. Moreover, for each $j\in\{2,3,4\}$, every edge of $G_j$ is either an edge of a matching inside $S$ or has both endpoints in $V_{1,j}$.
\end{proof}

Now, we can quickly finish the proof of Theorem~\ref{thm:P5_complete_upper}. 
W.l.o.g.~we may assume that  $|V_{1,2}|=\min\{|V_{1,2}|,|V_{1,3}|,|V_{1,4}|\}$. Since the set $S$ induces a matching, $s = |S|$ is even. We thus have
    \begin{equation}\label{eq:complete-Sbound}
        e(G_2) \leq \frac s2 + \binom{|V_{1,2}|}{2} \leq \frac s2 + \binom{\lfloor\frac{n-s}3\rfloor}{2}.
    \end{equation}
    If $s=0$, then (\ref{eq:complete-Sbound}) reduces to $e(G_2) \leq \binom{\lfloor n/3 \rfloor}{2}$, contradicting~\eqref{ass}.
    If $s=2$, then we also obtain $e(G_2) \leq1+\binom{\lfloor\frac{n-2}3\rfloor}{2}\leq \binom{\lfloor n/3 \rfloor}{2}$, unless $\lfloor\frac{n-2}3\rfloor = \lfloor\frac n3\rfloor$, that is, unless $n \equiv 2 \pmod 3$. But then $e(G_2) \leq 1 + \binom{\lfloor n/3 \rfloor}{2}$, which contradicts~\eqref{ass} in this case.
    For $s \geq 4$, (\ref{eq:complete-Sbound}) gives 
    \[e(G_2) \leq \frac{s}{2} + \binom{\frac{n-s}{3}}{2} = \frac{n^2-2ns+s^2-3n+12s}{18} \leq \frac{\max\{n^2-11n+64, 9n\}}{18}\leq\binom{\frac{n-2}{3}}{2},  \]
    because the expression in the middle  is a convex function of $s$, $4 \le s \le n$, while the expression on the right can be rewritten as $(n^2-7n+10)/18$. 
    But
    \[\binom{\frac{n-2}{3}}{2}\leq \binom{\lfloor n/3 \rfloor}{2},\]
    and one gets a contradiction with~\eqref{ass}.
\end{proof}

\section{The Conjecture}\label{fin_rem}

\subsection{Four colors}\label{conj4}

At the moment, we are unable to pinpoint the threshold $\pi_4^{\min}(P_5)$. The lower bound~\eqref{kell2} which has determined the values of $\pi_k^{\min}(P_5)$ for $k\in\{5,6,9\}$, for $k=4$ is not optimal. Indeed, consider the following construction. Split the  vertex set into disjoint sets $A_1$, $A_2$, $A_3$, each of size $\lfloor (3-2\sqrt{2})n \rfloor$, and the remaining set $B$  (of size, approximately, $(6\sqrt2-8)n$). For $i \in \{1,2,3\}$, let $G_i$ be the union of two complete graphs, one induced on $A_i$, the other on $B$, and let $G_4$ be the complete graph on $A_1 \cup A_2 \cup A_3$ (see Figure~\ref{Fig:4col}). Then, the number of edges in each of these four graphs is approximately
\begin{align*}
\left(153-108 \sqrt{2}\right)\binom n2 + O(n) \approx 0.265\binom n2,
\end{align*}
which yields that $\pi_4^{\min}(P_5)>\frac14$. We speculate that this construction does determine the rainbow Tur\'an density for $P_5$.

\begin{conjecture}
We have $\pi_4^{\min}(P_5)=153-108 \sqrt{2}$.
\end{conjecture}

\begin{figure}[ht]
\begin{center}
\begin{tikzpicture}[scale=1]

\draw[fill=white] (0,-1) circle [radius=0.7];
\draw[fill=white] (3,-1) circle [radius=0.7];
\draw[fill=white] (1.5,1) circle [radius=0.7];
\draw[fill=white] (7,0) circle [radius=1.1];

\draw [ultra thick] (0.7,-1) -- (2.3,-1);
\draw [ultra thick] (0.45,-0.45) -- (1.05,0.45);
\draw [ultra thick] (2.55,-0.45) -- (1.95,0.45);

\draw[thick, color=brilliantrose] (0,-1.5) .. controls (-0.2,-1.3) and (-0.2,-0.7) .. (0,-0.5);
\draw[thick] (0,-1.5) .. controls (0.2,-1.3) and (0.2,-0.7) .. (0,-0.5);
\draw[thick, color=applegreen] (1.5,0.5) .. controls (1.3,0.7) and (1.3,1.3) .. (1.5,1.5);
\draw[thick] (1.5,0.5) .. controls (1.7,0.7) and (1.7,1.3) .. (1.5,1.5);
\draw[thick, color=gptblue] (3,-1.5) .. controls (2.8,-1.3) and (2.8,-0.7) .. (3,-0.5);
\draw[thick] (3,-1.5) .. controls (3.2,-1.3) and (3.2,-0.7) .. (3,-0.5);
\draw[thick, color=brilliantrose] (7,-0.8) .. controls (6.4,-0.4) and (6.4,0.4) .. (7,0.8);
\draw[thick, color=applegreen] (7,-0.8) -- (7,0.8);
\draw[thick, color=gptblue] (7,-0.8) .. controls (7.6,-0.4) and (7.6,0.4) .. (7,0.8);

\node at (-0.5,-1.5) [below, left] {$A_1$};
\node at (2,1.5) [above, right] {$A_2$};
\node at (3.5,-1.5) [below, right] {$A_3$};
\node at (7.8,-0.8) [below, right] {$B$};
\node at (1.5,-1) [below] {$4$};
\node at (0.8,0.1) [left] {$4$};
\node at (2.2,0.1) [right] {$4$};
\node at (-0.1,-1) [left, color=brilliantrose] {$1$};
\node at (0.1,-1) [right] {$4$};
\node at (1.4,1) [left, color=applegreen] {$2$};
\node at (1.6,1) [right] {$4$};
\node at (2.9,-1) [left, color=gptblue] {$3$};
\node at (3.1,-1) [right] {$4$};
\node at (6.65,0) [left, color=brilliantrose] {$1$};
\node at (6.95,0) [right, color=applegreen] {$2$};
\node at (7.35,0) [right, color=gptblue] {$3$};

\end{tikzpicture}
\caption{A construction showing $\pi_4^{\min}(P_5)\geq153-108 \sqrt{2}\geq0.264$.}\label{Fig:4col}
\end{center}
\end{figure}

\subsection{General conjecture}\label{conjk}

The construction believed to be optimal for $k=4$ can be generalized to all numbers of colors $k$ such that $k\equiv 1 \pmod 3$. 

\smallskip
{\bf Construction I.} Let $k=3h+1$, $h\ge1$, and split $V$ into $h+3$ sets 
$$V=A_1\cup A_2\cup A_3\cup B\cup C_1\cup\cdots\cup C_{h-1} $$ of sizes 
$$|A_1|=|A_2|=|A_3|=\left\lfloor\frac{3h-2\sqrt{2}}{9h^2-8}n\right\rfloor,\quad |C_1|=\cdots=|C_{h-1}|=\left\lfloor\frac{9h-6\sqrt{2}}{9h^2-8}n\right\rfloor,$$
and $$|B|\approx\frac{6\sqrt{2}h-8}{9h^2-8}n.$$ 
(Note that for $k=4$ one has $h=1$ and this partition reduces to that in Subsection~\ref{conj4}.)

Then, for each $i \in [h-1]$, let $G_{i}, G_{h+i}, G_{2h+i}$ be the complete graphs on vertex set $C_i$, and let the remaining graphs $G_h, G_{2h}, G_{3h}$ and $G_{3h+1}$ form the construction for $4$ colors presented in Subsection~\ref{conj4}, that is, each of $G_h, G_{2h}, G_{3h}$ is the union of two complete graphs, one  on $B$ and the other one on, respectively,  $A_1, A_2, A_3$, while $G_{3h+1}$ is the complete graph on $A_1 \cup A_2 \cup A_3$ (see Figure~\ref{Fig:7col} for the case $k=7$).

\begin{figure}[ht]
\begin{center}
\begin{tikzpicture}[scale=1]

\draw[fill=white] (0,-1) circle [radius=0.7];
\draw[fill=white] (3,-1) circle [radius=0.7];
\draw[fill=white] (1.5,1) circle [radius=0.7];
\draw[fill=white] (6,0) circle [radius=1.1];
\draw[fill=white] (9.5,0) circle [radius=1.4];

\draw [ultra thick] (0.7,-1) -- (2.3,-1);
\draw [ultra thick] (0.45,-0.45) -- (1.05,0.45);
\draw [ultra thick] (2.55,-0.45) -- (1.95,0.45);

\draw[thick, color=brilliantrose] (0,-1.5) .. controls (-0.2,-1.3) and (-0.2,-0.7) .. (0,-0.5);
\draw[thick] (0,-1.5) .. controls (0.2,-1.3) and (0.2,-0.7) .. (0,-0.5);
\draw[thick, color=applegreen] (1.5,0.5) .. controls (1.3,0.7) and (1.3,1.3) .. (1.5,1.5);
\draw[thick] (1.5,0.5) .. controls (1.7,0.7) and (1.7,1.3) .. (1.5,1.5);
\draw[thick, color=gptblue] (3,-1.5) .. controls (2.8,-1.3) and (2.8,-0.7) .. (3,-0.5);
\draw[thick] (3,-1.5) .. controls (3.2,-1.3) and (3.2,-0.7) .. (3,-0.5);
\draw[thick, color=brilliantrose] (6,-0.8) .. controls (5.4,-0.4) and (5.4,0.4) .. (6,0.8);
\draw[thick, color=applegreen] (6,-0.8) -- (6,0.8);
\draw[thick, color=gptblue] (6,-0.8) .. controls (6.6,-0.4) and (6.6,0.4) .. (6,0.8);
\draw[thick, color=gptorange] (9.5,-0.9) .. controls (8.9,-0.5) and (8.9,0.5) .. (9.5,0.9);
\draw[thick, color=gptpurple] (9.5,-0.9) -- (9.5,0.9);
\draw[thick, color=gptgold] (9.5,-0.9) .. controls (10.1,-0.5) and (10.1,0.5) .. (9.5,0.9);

\node at (-0.5,-1.5) [below, left] {$A_1$};
\node at (2,1.5) [above, right] {$A_2$};
\node at (3.5,-1.5) [below, right] {$A_3$};
\node at (6.7,-0.9) [below, right] {$B$};
\node at (10.6,-0.9) [below, right] {$C_1$};

\node at (1.5,-1) [below] {$7$};
\node at (0.8,0.1) [left] {$7$};
\node at (2.2,0.1) [right] {$7$};
\node at (-0.1,-1) [left, color=brilliantrose] {$2$};
\node at (0.1,-1) [right] {$7$};
\node at (1.4,1) [left, color=applegreen] {$4$};
\node at (1.6,1) [right] {$7$};
\node at (2.9,-1) [left, color=gptblue] {$6$};
\node at (3.1,-1) [right] {$7$};
\node at (5.65,0) [left, color=brilliantrose] {$2$};
\node at (5.95,0) [right, color=applegreen] {$4$};
\node at (6.35,0) [right, color=gptblue] {$6$};
\node at (9.15,0) [left, color=gptorange] {$1$};
\node at (9.45,0) [right, color=gptpurple] {$3$};
\node at (9.85,0) [right, color=gptgold] {$5$};

\end{tikzpicture}
\caption{Construction I from Conjecture~\ref{conjecture} for $7$ colors. 
}\label{Fig:7col}
\end{center}
\end{figure}

This construction yields the bound
$$\pi_{3h+1}^{\min}(P_5)\ge \left(\frac{9h-6\sqrt{2}}{9h^2-8}\right)^2,$$
which turns out to be better than bound~\eqref{kell2}  stemming from a standard construction (cf. Subsection~\ref{res_min}) which we recall next.

\smallskip
{\bf Construction II.} For every $k\ge 4$, split $V$ into $\lceil \frac{k}{3} \rceil$ sets $V_j$ of (almost) equal size and induce cliques in three colors on each of these sets. If $k=3h+r$, $1\le r\le 2$, then one set $V_j$ gets only $r$ cliques (see Figure~\ref{Fig:8col}). This construction yields the bound

$$\pi_k^{\min}(P_5)\ge\left\lceil \frac{k}{3} \right\rceil^{-2}.$$
It is easy to check that for all $h\ge1$,
\[\left(\frac{9h-6\sqrt{2}}{9h^2-8}\right)^2 > \frac{1}{(h+1)^2},\]
so Construction I is better than Construction II for every $k \equiv 1 \pmod 3$. 

\begin{figure}[ht]
\begin{center}
\begin{tikzpicture}[scale=1]

\draw[fill=white] (0,0) circle [radius=1.4];
\draw[fill=white] (3.5,0) circle [radius=1.4];
\draw[fill=white] (7,0) circle [radius=1.4];

\draw[thick, color=brilliantrose] (0,-1) .. controls (-0.7,-0.6) and (-0.7,0.6) .. (0,1);
\draw[thick, color=applegreen] (0,-1) -- (0,1);
\draw[thick, color=gptblue] (0,-1) .. controls (0.7,-0.6) and (0.7,0.6) .. (0,1);
\draw[thick, color=gptorange] (3.5,-1) .. controls (2.8,-0.6) and (2.8,0.6) .. (3.5,1);
\draw[thick, color=gptpurple] (3.5,-1) -- (3.5,1);
\draw[thick, color=gptgold] (3.5,-1) .. controls (4.2,-0.6) and (4.2,0.6) .. (3.5,1);
\draw[thick, color=gptteal] (7,-1) .. controls (6.5,-0.6) and (6.5,0.6) .. (7,1);
\draw[thick, color=black] (7,-1) .. controls (7.5,-0.6) and (7.5,0.6) .. (7,1);

\node at (1.1,-1.1) [below, right] {$V_1$};
\node at (4.6,-1.1) [below, right] {$V_2$};
\node at (8.1,-1.1) [below, right] {$V_3$};

\node at (-0.4,0) [left, color=brilliantrose] {$1$};
\node at (-0.05,0) [right, color=applegreen] {$2$};
\node at (0.45,0) [right, color=gptblue] {$3$};
\node at (3.05,0) [left, color=gptorange] {$4$};
\node at (3.45,0) [right, color=gptpurple] {$5$};
\node at (3.95,0) [right, color=gptgold] {$6$};
\node at (6.7,0) [left, color=gptteal] {$7$};
\node at (7.3,0) [right, color=black] {$8$};
\end{tikzpicture}
\caption{Construction II from Conjecture~\ref{conjecture} for $8$ colors.}\label{Fig:8col}
\end{center}
\end{figure}

Still, in the long run, the real winner is, surprisingly, a different construction built upon an optimal construction for $P_4$ and $k\ge 7$ colors from \cite{BG23} (cf. Theorem~\ref{thm:p4min} in Section~\ref{res_min}).

\smallskip
{\bf Construction III.} 
Split $V$ into sets $A_1, A_2, \ldots, A_a$ of size, approximately, $xn$, and $B_1, B_2, \dots, B_{k-a}$ of size, approximately $yn$, where $ax+(k-a)y=1$. For $i\in [a]$, let $G_i$ be the complete graph on vertex set $A_i$ and, for $j \in [k-a]$, let $G_{a+j}$ be the union of the complete graph on $B_j$ and the complete bipartite graph between $B_j$ and $\bigcup_{i=1}^a A_i$ (see Figure~\ref{Fig:10col}).

The optimal bound for $\pi_k^{\min}(P_5)$ stemming from this construction can be computed by maximizing $x^2$ over integers $a \in [k]$ under constraints $ax+(k-a)y=1$ and $x^2=y^2+2axy$. 
For example, for $k=10$, the best value of $a$ is $2$, and one gets $\pi_k^{\min}(P_5) \geq 0.0661$, while Constructions I and II yield only, respectively,   $\pi_k^{\min}(P_5) \geq 0.0643$ and $\pi_k^{\min}(P_5) \geq 0.0625$.

\begin{figure}[ht]
\begin{center}
\begin{tikzpicture}[scale=1]

\draw[fill=white] (1.5,3) circle [radius=1.2];
\draw[fill=white] (4.5,3) circle [radius=1.2];
\draw[fill=white] (0,0) circle [radius=0.6];
\draw[fill=white] (1.5,0) circle [radius=0.6];
\draw[fill=white] (3,0) circle [radius=0.6];
\node at (4.5,0) {$\ldots$};
\draw[fill=white] (6,0) circle [radius=0.6];

\draw[ultra thick, color=brilliantrose] (0.8,3) -- (2.2,3);
\draw[ultra thick, color=applegreen] (3.8,3) -- (5.2,3);
\draw[ultra thick, color=gptblue] (-0.4,0) -- (0.4,0);
\draw[ultra thick, color=gptorange] (1.1,0) -- (1.9,0);
\draw[ultra thick, color=gptpurple] (2.6,0) -- (3.4,0);
\draw[ultra thick, color=gptgold] (5.6,0) -- (6.4,0);
\draw[ultra thick, color=gptblue] (-0.1,0.6) -- (0.4,2.5);
\draw[ultra thick, color=gptblue] (0.1,0.6) -- (3.7,2.1);
\draw[ultra thick, color=gptorange] (1.4,0.6) -- (0.85,2);
\draw[ultra thick, color=gptorange] (1.6,0.6) -- (4,1.9);
\draw[ultra thick, color=gptpurple] (2.9,0.6) -- (1.5,1.8);
\draw[ultra thick, color=gptpurple] (3.1,0.6) -- (4.5,1.8);
\draw[ultra thick, color=gptgold] (5.85,0.6) -- (2,1.9);
\draw[ultra thick, color=gptgold] (6.1,0.6) -- (5.6,2.5);

\node at (0.6,3.9) [above, left] {$A_1$};
\node at (5.4,3.9) [above, right] {$A_2$};
\node at (0.3,-0.55) [below, right] {$B_1$};
\node at (1.8,-0.55) [below, right] {$B_2$};
\node at (3.3,-0.55) [below, right] {$B_3$};
\node at (6.3,-0.55) [below, right] {$B_8$};

\node at (1.5,3) [above, color=brilliantrose] {$1$};
\node at (4.5,3) [above, color=applegreen] {$2$};
\node at (0,0) [below, color=gptblue] {$3$};
\node at (0.15,0.6) [above, color=gptblue] {$3$};
\node at (1.5,0) [below, color=gptorange] {$4$};
\node at (1.55,0.6) [above, color=gptorange] {$4$};
\node at (3,0) [below, color=gptpurple] {$5$};
\node at (3,0.6) [above, color=gptpurple] {$5$};
\node at (6,0) [below, color=gptgold] {$8$};
\node at (5.8,0.6) [above, color=gptgold] {$8$};
\end{tikzpicture}
\caption{Construction III from Conjecture~\ref{conjecture} for $10$ colors.}\label{Fig:10col}
\end{center}
\end{figure}

We anticipate that for all $k$ the  best lower bounds presented above do determine the correct values of $\pi_k^{\min}(P_5)$.

\begin{conjecture}\label{conjecture} For every $k\geq4$, the value of $\pi_k^{\min}(P_5)$ is obtained by Construction~I for $k\in\{4,7\}$, by Construction II for $k\in\{5,6,8,9,11,12\}$, and by Construction III for $k=10$ and  $k \geq 13$.
\end{conjecture}

\end{document}